\DeclareFontFamily{OT1}{pzc}{}
\DeclareFontShape{OT1}{pzc}{m}{it}{<-> s * [1.200] pzcmi7t}{}
\DeclareMathAlphabet{\mathpzc}{OT1}{pzc}{m}{it}
\DeclareMathOperator{\SO}{SO}
\DeclareMathOperator{\so}{\mathfrak{so}}
\DeclareMathOperator{\su}{\mathfrak{su}}
\DeclareMathOperator{\gl}{\mathfrak{gl}}
\DeclareMathOperator{\sym}{\mathfrak{sym}}
\newcommand{\R}{\mathbb{R}}
\newtheorem{theorem}{Theorem}
\newtheorem{definition}{Definition}
\newtheorem{lemma}{Lemma}
\newtheorem{proposition}{Proposition}
\DeclareFontFamily{OT1}{pzc}{}
\DeclareFontShape{OT1}{pzc}{m}{it}{<-> s * [1.200] pzcmi7t}{}
\DeclareMathAlphabet{\mathpzc}{OT1}{pzc}{m}{it}
\newcommand*\mcap{\mathbin{\mathpalette\mcapinn\relax}}
\newcommand*\mcapinn[2]{\vcenter{\hbox{$\mathsurround=0pt
  \ifx\displaystyle#1\textstyle\else#1\fi\bigcap$}}}
\newcommand*\mcup{\mathbin{\mathpalette\mcupinn\relax}}
\newcommand*\mcupinn[2]{\vcenter{\hbox{$\mathsurround=0pt
  \ifx\displaystyle#1\textstyle\else#1\fi\bigcup$}}}
\begin{document}

\title{\bf Structural Controllability  on Graphs for\\  Drifted Bilinear Systems over Lie Groups}

\date{}

\author{Xing Wang\thanks{Key Laboratory of Mathematics Mechanization, Institute of Systems Science, Academy of Mathematics and Systems Science, Chinese Academy of Sciences, Beijing 100190, China; School of Mathematical Sciences, University of Chinese Academy of Sciences, Beijing 100049, China. (wangxing17@amss.ac.cn)}, Bo Li\thanks{Key Laboratory of Mathematics Mechanization, Academy of Mathematics and Systems Science, Chinese Academy of Sciences, Beijing 100190, China. (libo@amss.ac.cn)}, Jr-Shin Li\thanks{Department of Electrical and Systems Engineering, Washington University, St. Louis, MO 63130, USA. (jsli@wustl.edu)},
 	Ian R. Petersen\thanks{Research School of Engineering, Australian National University, Canberra, ACT 0200, Australia. (ian.petersen@anu.edu.au)}, Guodong Shi\thanks{Australian Center for Field Robotics, School of Aerospace, Mechanical and Mechatronic Engineering, The University of Sydney, NSW 2006, Australia. (guodong.shi@sydney.edu.au)}}
	
\maketitle

\begin{abstract}
In this paper, we study graphical conditions for structural controllability and accessibility  of drifted bilinear systems over Lie groups. We consider a bilinear control system with   drift  and  controlled terms that evolves over the special orthogonal group, the general linear group, and the special unitary group. Zero patterns are prescribed for the drift and controlled dynamics with respect to a set of base elements in the corresponding Lie algebra. The drift dynamics must respect a rigid zero-pattern in the sense that the drift takes values as a linear combination of base elements with strictly non-zero coefficients; the controlled dynamics are allowed to follow a free zero pattern with potentially zero coefficients in the configuration of the controlled term by linear combination of the controlled base elements. First of all, for such bilinear systems over  the special orthogonal group or the special unitary group, the zero patterns are shown to be associated with two undirected or directed graphs whose connectivity and connected components  ensure structural controllability/accessibility. Next, for bilinear systems over the special unitary group, we introduce two edge-colored graphs associated with the drift and controlled zero patterns, and prove structural controllability conditions related to connectivity and the number of edges of a particular color. 
\end{abstract}
 
\section{Introduction}
The past decade has witnessed an important line of   research for multi-agent systems where {\em agents}, representing subsystems with distributed sensing and control units, are dynamically  interconnected over an underlying  network in order to achieve collective goals	such as consensus, formation, coverage, and controllability \cite{jad03,mesbahi}. The links of such a network might indicate physical interactions, or non-physical information exchanges, which define a {\em topology} of the overall multi-agent system.  The interface of classical control theory and graph theoretic methods has led to 
fundamental  insights regarding how the network topology enables convergence of distributed algorithms, stabilizability of distributed controllers, or controllability \cite{Murray04,Martinez07,Nedic10,C2015,B2007,Rahmani09} of the network dynamical states, for both linear and nonlinear network dynamics. 

Perhaps one  central problem in networked systems is the ability to control the system dynamics with control inputs scattered amongst a subset of agents (nodes), leading to a natural network controllability problem. Indeed classical control theories have established celebrated results on conditions for  controllability of general dynamical systems. In the networked system era, graphical conditions that link notions of {\em connectivity} for the network topology to the controllability of the system are desired.  Sparked by such a vision, a series of important understandings on graph-theoretic controllability  were established for networked systems with linear dynamics \cite{Rahmani09,ji2009,Parlangeli12,Gharesifard17,Chen15}. Recently such developments have been pushed further to  structural controllability \cite{Morse2019,Trentelman2020,Trentelman2021}, where instead of focusing on a specific system setup, controllability is defined as a structural property for generic configurations of the system parameters. The seminal work of Lin established that structural controllability is fully determined by the system {\em zero-pattern}, which specifies the  locations of the potentially non-zero entries in the parameter space. It turns out that the structural controllability of linear networked systems can also be established from a graph-theoretic point of view, marking  significant progress in the study of network controllability \cite{Morse2019,Trentelman2020,Trentelman2021}.

Controllability analysis  from graph theory is a challenging question for  multi-agent systems with general nonlinear dynamics. For systems with bilinear dynamics, however,  graph-theoretic insights have been shown to be promising towards understanding  controllability conditions \cite{structural,li2017}. In bilinear systems, the dynamical evolution of the system state is governed by the product of the system state and the control inputs. Therefore, bilinear systems are nonlinear systems with special geometric constraints and the bilinear property: the system state evolves in a Lie group, and  the vector field is a bilinear function taking values in the corresponding Lie algebra with respect to the state and the input  \cite{Elliott2009}. Despite its simplicity,   bilinear systems have a wide range of applications  in the fields of engineering, economic, and even quantum systems \cite{Khapalov1996,altafini2002,albertini2003,dir08,alb02,qi2020}; the study of controllability for bilinear system over Lie groups led to the discovery of the  Lie algebra rank condition \cite{Elliott1971,Brockett1973,Aoki1975,H1972,J1975,W1975} as an important chapter in nonlinear system theory.

The Lie algebra rank condition has been shown to be related to the graphical perspective as well. In \cite{structural}, a framework for the structural controllability of bilinear control systems was proposed, where it was shown that the connectivity of the underlying interaction graph determines the structural controllability of several classes of bilinear control systems over matrix groups. In \cite{li2017}, a graphical notion of permutation cycles was introduced to bilinear systems for the characterization of controllability over the special orthogonal group. Although the result in \cite{structural} considered a special class of drifted systems, the results derived in \cite{structural,li2017} mostly focused on driftless systems. Later in \cite{wang2020,arxiv}, controllability and accessibility conditions for bilinear systems were extended to drifted dynamics, where the drift dynamics and controlled dynamics define two interaction graphs, respectively, and their joint connectivity was shown to be critical for ensuring controllability or accessibility of systems over different matrix Lie groups. It is worth emphasizing  that for linear networked systems, the system graphs are   motivated from agent interconnections often with a physical implication  \cite{Rahmani09,ji2009,Parlangeli12,Gharesifard17,Chen15}, while for bilinear systems the system graphs are artificially introduced in order to facilitate a graphical analysis.

In this paper, we study graph theoretic conditions  for the structural controllability and accessibility of drifted  bilinear systems over Lie groups.  For a drifted bilinear system over the special orthogonal group, the general linear group, or  the special unitary group, zero patterns are introduced for the drift dynamics and the controlled dynamics, respectively. The zero pattern associated with the drift is assumed to be rigid, as a linear combination with non-zero coefficients of several base elements in a finite  subset of the corresponding Lie algebra.  As a result, the parameters of the drift dynamics are possibly  dependent, similar to the setup in \cite{Morse2019} for linear networked systems. The zero pattern associated with the controlled terms is assumed to be free, where coefficients might take zero values for the linear combination of the base elements. First of all, for systems over the special orthogonal group and the general Lie group, two (undirected or directed) graphs are introduced, respectively, corresponding to the drift zero pattern and the controlled zero pattern;  structural  controllability/accessibility conditions are derived based on connectivity and sizes of connected components. Next, for systems over the special unitary group, we introduced two edge-colored graphs corresponding to the zero patterns of the drift and controlled dynamics; and then conditions based on connectivity and number of edges in a particular color are established for the structural controllability of the system.

Along with the theoretical results, various examples are presented to illustrate the intuitions behind. The technical proofs for all statements are put in the Appendix.  The remainder of the paper is organized as follows. In Section \ref{sec:pre}, we present some preliminary material  in graph theory. In Section \ref{sec:problem}, we define the problem of interest. In Section \ref{sec:results}, the main results of the work are presented, followed by some concluding remarks in  Section \ref{sec:conc}.

\section{Graph Theory Preliminaries}\label{sec:pre}
\subsection{Undirected Graph}
An undirected graph  $\mathrm
{G}=(\mathrm {V}, \mathrm {E})$ consists of a finite set
$\mathrm{V}$ of nodes and an edge set
$\mathrm {E}$, where $\mathrm {E}$  is a set of unordered pairs of elements in ${\rm V}$. If there is an edge $\{v_i,v_j\}\in\mathrm {E}$,
we say that $v_i$ and $v_j$ are {\it adjacent} or {\it neighbours}. The number of neighbours of node $v$ is called the {\it degree} of $v$, denoted by ${\rm deg}(v)$. A graph $\mathrm{G}$ is a {\it bi-graph} if there is a partition of the node set into $\mathrm{V}=\mathrm{V}_{1}\bigcup \mathrm{V}_{2}$ with $\mathrm{V}_{1}$ and $\mathrm{V}_{2}$ being nonempty and mutually disjoint, where all edges are between $\mathrm{V}_{1}$ and $\mathrm{V}_{2}$.

A {\it path} between two nodes $v_1$ and $v_k$ in $\mathrm{G}$ is a sequence of distinct nodes
$v_1v_2\dots v_{k}$ such that for any $m=1,\dots,k-1$, there is an edge between $v_m$ and $v_{m+1}$. A pair of distinct  nodes $v_i$ and $v_j$
is said to be {\it reachable} from each other if there is a path between them.  A node is always assumed to be reachable
from itself. We call a graph $\mathrm{G}$ {\it connected} if every pair of distinct nodes in $\mathrm{V}$ is reachable from each other. The subgraph of $\mathrm{G}$ derived from the node set $\mathrm{V}^\ast \subseteq \mathrm{V}$, denoted by $\mathrm{G}|_{\mathrm{V}^\ast}$,
 is the graph $(\mathrm{V}^\ast, \mathrm{E}^\ast)$, where $\{v_i,v_j\}\in \mathrm{E}^\ast$ if and only if $\{v_i,v_j\}\in \mathrm{E}$ for $v_i,v_j\in \mathrm{V}^\ast$. A {\it connected component} (or just component) of  $\mathrm
{G}$ is a connected subgraph induced by some $\mathrm{V}^\ast \subseteq \mathrm{V}$, which is reachable to no additional nodes in $\mathrm
{V}\setminus \mathrm{V}^\ast$. Let $\mathrm{G}_1=(\mathrm {V}_1, \mathrm {E}_1)$, $\mathrm{G}_2=(\mathrm {V}_2, \mathrm {E}_2)$. The union graph $\mathrm{G}_1\mcup\mathrm{G}_2$ is defined by $\mathrm{G}_1\mcup\mathrm{G}_2=(\mathrm{V}_1\mcup\mathrm{V}_2,\mathrm{E}_1\mcup\mathrm{E}_2)$.

\subsection{Directed Graph}
A {\it directed} graph (digraph) $\mathcal
{G}=(\mathrm {V}, \mathcal {E})$ consists of a finite set
$\mathrm {V}$ of nodes and an arc set
$\mathcal {E}\subseteq \mathrm{V}\times\mathrm{V}$, where  $(v_i,v_j)\in\mathcal {E}$ denotes an
{\it arc}  from node $v_i\in \mathrm{V}$  to node $v_j\in\mathrm{V}$.
For $(v_i,v_j)\in\mathcal {E}$, we say that $v_i$ is an {\it in-neighbor} of $v_j$ and $v_j$ is an {\it out-neighbor} of $v_i$.
The number of in-neighbors and out-neighbors of $v$ is called its {\it in-degree} and {\it out-degree}, denoted by ${\rm deg}^{+}(v)$ and ${\rm deg}^{-}(v)$, respectively.

A {\it self-loop} in a digraph is an arc starting from and pointing to the same node. A digraph $\mathcal{G}$ is {\it simple} if it has no self-loops. $\mathcal{G}$ is {\it simple complete} if $\mathcal {E}=\mathrm{V}\times\mathrm{V}\setminus \{(v_i,v_i): v_i\in \mathrm{V}\}$.
The digraph obtained by removing the self-loop of $\mathcal{G}$ is called the simple digraph corresponding to $\mathcal{G}$. A {\it directed path} from node $v_1\in \mathrm{V}$ to $v_k\in \mathrm{V}$ is a sequence of distinct nodes $v_1v_2\dots v_{k}$
such that for any $m=1,\dots,k-1$, $(v_m, v_{m+1})$ is a directed arc in $\mathcal{E}$.
 We say that node $v_j$ is {\it reachable} from node $v_i$ if there is a directed path from $v_i$ to $v_j$. A digraph $\mathcal{G}$ is {\it strongly connected} if every two  nodes are mutually reachable.  A {\it weakly connected component} of a digraph $\mathcal{G}$ is a component of $\mathcal{G}$ when the directions of links are ignored. Let $\mathcal{G}_1=(\mathrm {V}_1, \mathcal {E}_1)$, $\mathcal{G}_2=(\mathrm {V}_2, \mathcal {E}_2)$. The union graph $\mathcal{G}_1\mcup\mathcal{G}_2$ is defined by $\mathcal{G}_1\mcup\mathcal{G}_2=(\mathrm{V}_1\mcup\mathrm{V}_2,\mathcal{E}_1\mcup\mathcal{E}_2)$.
\subsection{Edge-colored Multigraph}
Let $\textbf{C}=\{1,2,\dots,k\}$ be a set with cardinality $k$, whose elements are
called colors. An {\it edge-colored multigraph} $\mathscr{G}=(\mathrm {V}, \mathscr {E})$ consists of a finite set $\mathrm{V}$ of nodes and an edge set
$\mathscr {E}$, where an element $\mathbbm{e}=\big\{{i,j};c\big\}\in\mathscr {E}$ denotes an edge with color $c\in \textbf{C}$ between nodes $i\in{\rm V}$ and $j\in{\rm V}$ \cite{Groos2006}. Each edge has a set of one or two nodes associated to it, which are called its endpoints. A {\it self-loop} $\{i,i;c\}\in\mathscr {E}$ is an edge that joins a single endpoint to itself. A {\it multi-edge} is a collection of two or more edges having identical endpoints. The edges in a multi-edge are distinguished by different colors. We call an edge-colored multigraph {\it simple} if it has neither self-loops nor multi-edges. A {\it complete} edge-colored multigraph is a simple edge-colored multigraph such that every pair of nodes is
joined by an edge. We say that an edge-colored multigraph $\mathscr{G}$ is {\it empty} if  $\mathscr{E}$ is empty.

In an edge-colored multigraph $\mathscr{G}$, a {\it walk} $W=\langle v_0,\mathbbm{e}_1,v_1,\mathbbm{e}_2,\dots,v_{k-1},\mathbbm{e}_k,v_k\rangle$ from node $v_0$ to node $v_k$ is an alternating sequence of nodes and edges, such that the endpoints of $ \mathbbm{e}_i$ is equal to $\{v_{i-1},v_i\}$, for $i = 1,\dots,k$.  A {\it closed walk} (or cycle) is a walk that begins and ends at the same node. Node $v_j$ is {\it reachable} from node $v_i$ if there is a walk from $v_i$ to $v_j$. An edge-colored multigraph is {\it connected} if for every pair of nodes $v_i $ and $v_j$, there is a walk
from $v_i$ to $v_j$.  A {\it subgraph} of $\mathscr{G}$ is an edge-colored multigraph $\mathscr{H}$ whose nodes and edges are all
in $\mathscr{G}$. A maximal connected subgraph of $\mathscr{G}$ is called a {\it connected component} or simply
a component of $\mathscr{G}$. Let $\mathscr{G}_1=(\mathrm {V}_1, \mathscr {E}_1)$, $\mathscr{G}_2=(\mathrm {V}_2, \mathscr {E}_2)$. The union graph $\mathscr{G}_1\mcup\mathscr{G}_2$ is defined by $\mathscr{G}_1\mcup\mathscr{G}_2=(\mathrm{V}_1\mcup\mathrm{V}_2,\mathscr{E}_1\mcup\mathscr{E}_2)$.

\section{Problem Definition}\label{sec:problem}
\subsection{Bilinear Control Systems over Lie Groups}
Let $\mathbf{G}$ be a connected Lie group and $\mathfrak{g}$ be its corresponding Lie algebra. We consider the following bilinear control system over $\mathbf{G}$:
\begin{align}\label{bilinear}
	\dot{X}(t)= \mathsf{B}_0 X(t)+\Big(\sum_{i=1}^m u_i(t) \mathsf{B}_i\Big) X(t), \ \ X(0)=X_{0},
\end{align}
where $X(t)\in \mathbf{G}, \mathsf{B}_i\in \mathfrak{g}$ for $i=0,\dots,m$, and $u_i(t) \in \mathbb{R}$ are piecewise continuous  control signals for $i=1,\dots,m$.
For $T\geq 0$, the set $\mathcal{R}_{T}(X_{0})$ consists of the points in $\mathbf{G}$ that are {\it attainable} from $X_{0}$ at time $T$; i.e., all terminal points $X(T)$ of solutions of system \eqref{bilinear} originating at $X(0)=X_{0}$ under all possible $u_i(t), i=1,\dots,m,t\in[0,T]$. The {\it attainable set} $\mathcal{R}(X_{0})$ then is defined as the union of such sets $\mathcal{R}_{T}(X_{0})$ for all $T\geq0$;  i.e.,
$\mathcal{R}(X_{0}):=\bigcup_{T\geq0}\mathcal{R}_{T}(X_{0})\subset\mathbf{G}.$
Let $I$ be the identity of $\mathbf{G}$.

\begin{definition}\label{def1}{\rm(\cite{dir08})}
	The system \eqref{bilinear} is called accessible  if   $\mathcal{R}(I)$ has an interior point in $\mathbf{G}$; and controllable, if $\mathcal{R}(I)=\mathbf{G}$.
\end{definition}

The system Lie algebra of $\eqref{bilinear}$ is given as $\{\mathsf{B}_0,\mathsf{B}_1,\dots,\mathsf{B}_m\}_{\rm LA}$,
where $\{\mathsf{B}_0,\mathsf{B}_1,\dots,\mathsf{B}_m\}_{\rm LA}$ is the generated Lie subalgebra of  $\mathsf{B}_0,\mathsf{B}_1,\dots,\mathsf{B}_m$.
The algebraic criteria developed in \cite{H1972, J1975, dir08} can be used to verify the accessibility  and controllability of the system $\eqref{bilinear}$ by exploiting the algebraic structure of the system Lie algebra. That is, the system $\eqref{bilinear}$ is accessible on the Lie group $\mathbf{G}$ if and only if the system Lie algebra satisfies $\{\mathsf{B}_0,\mathsf{B}_1,\dots,\mathsf{B}_m\}_{\rm LA}=\mathfrak{g}$; If $\mathsf{B}_0=0$ or the Lie group $\mathbf{G}$ is compact, then the system $\eqref{bilinear}$ is controllable on the Lie group $\mathbf{G}$ if and only if it is accessible on the Lie group $\mathbf{G}$.

\subsection{Zero Patterns}
Let us assume that $\mathfrak{g}$ is a finite dimensional vector space over $\mathbb{R}$. Let  $\mathfrak{g}_{a}:=\{\textrm{g}_{a_1},\dots,\textrm{g}_{a_k}\}$ be a subset of  $\mathfrak{g}$, i.e., $\textrm{g}_{a_s}\in \mathfrak{g}$ for all $s=1,\dots,k$ with $k\geq 1$. We introduce the following definition.
\begin{definition}\label{def2}
(i) The free zero pattern generated by $\mathfrak{g}_{a}$, denoted by $\Sigma_{\rm f}(\mathfrak{g}_{a})$, is defined as the generated linear subspace of $\mathfrak{g}_{a}$, i.e.,
$$
\Sigma_{\rm f}(\mathfrak{g}_{a}):=\Big\{ \sum_{s=1}^kl_s{\rm g}_{a_s}: l_s \in \mathbb{R},s=1,\dots,k\Big\}.
$$
(ii) The rigid  zero pattern generated by $\mathfrak{g}_{a}$, denoted by $\Sigma_{\rm r}(\mathfrak{g}_{a})$, is defined as the set of linear combinations  with nonzero coefficients of the elements in $\mathfrak{g}_{a}$, i.e.,
$$
\Sigma_{\rm r}(\mathfrak{g}_{a}):=\Big\{ \sum_{s=1}^kl_s{\rm g}_{a_s}: l_s\neq 0 \in \mathbb{R},s=1,\dots,k\Big\}.
$$
\end{definition}

\noindent{\bf Example 1.} The general linear group, ${\rm GL}(n)$, is the Lie group formed by all $\mathbb{R}^{n\times n}$ invertible matrices. It has two components separated by the set of singular matrices. The connected component of ${\rm GL}(n)$ containing $I_{n}$ is a Lie subgroup of ${\rm GL}(n)$, denoted by ${\rm GL^{+}}(n)$. The Lie algebra associated with Lie group ${\rm GL^{+}}(n)$ is $\mathfrak{gl}(n)$, the set of all $n\times n$ real matrices. Let $E_{ij}\in\R^{n\times n}$ be the matrix with $(i,j)$-th entry being $1$ and others being $0$.  Let $\mathfrak{g}$ be $\mathfrak{gl}(n)$.

\begin{figure}[H]
  \centering
  \includegraphics[width=7cm]{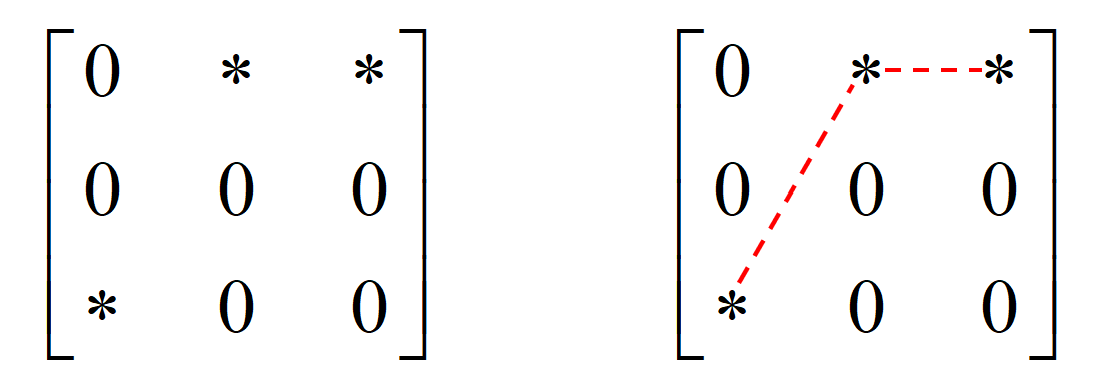}
  \caption{The form of the matrix in $\Sigma_{\rm r}(\mathfrak{g}_{a})$ (left) and the form of the matrix in $\Sigma_{\rm r}(\mathfrak{g}_{b})$ (right). The dashed lines indicate linear relations. }\label{fig1}
\end{figure}

(i) Let $\mathfrak{g}_{a}:=\{\textrm{g}_{a_1}, \textrm{g}_{a_2}, \textrm{g}_{a_3}\}$ with $\textrm{g}_{a_1}=E_{12}$, $\textrm{g}_{a_2}=E_{13}$, and $\textrm{g}_{a_3}=E_{31}$. Then an element $G\in \Sigma_{\rm f}(\mathfrak{g}_{a})$ may take value as  $E_{12}$ or $E_{13}$; an element $G\in \Sigma_{\rm r}(\mathfrak{g}_{a})$ can neither be $E_{12}$ nor $E_{13}$.

(ii) Let $\mathfrak{g}_{b}:=\{\textrm{g}_{b_1}\}$   with $\textrm{g}_{b_1}=E_{12}+2E_{13}+3E_{31}$. Then an element  $G\in \Sigma_{\rm f}(\mathfrak{g}_{b})$ or $G\in \Sigma_{\rm r}(\mathfrak{g}_{b})$ always satisfies that $G_{13}=2G_{12}$, $G_{31}=3G_{12}$; i.e., the zero pattern specified by $\mathfrak{g}_{b}$ imposes a linear equality constraints on the elements of the matrices therein. \hfill$\square$

\subsection{Structural Controllability}

Let   $\mathfrak{g}_{\alpha}:=\{\textrm{g}_{\alpha_1},\dots,\textrm{g}_{\alpha_d}\}$ and $\mathfrak{g}_{\beta}:=\{\textrm{g}_{\beta_1},\dots,\textrm{g}_{\beta_e}\}$ be two subsets of the Lie algebra $\mathfrak{g}$. We introduce the following definition on the structural controllability and accessibility  of the system \eqref{bilinear}.

\begin{definition}\label{def3}
The system \eqref{bilinear} is called structurally controllable (accessible) on the Lie group $\mathbf{G}$ with respect to the pair of zero patterns $\Sigma_{\rm zero}:=(\Sigma_{\rm r}(\mathfrak{g}_{\alpha}),\Sigma_{\rm f}(\mathfrak{g}_{\beta}))$ if there exist an integer $m\geq 1$, $\mathsf{B}_0\in \Sigma_{\rm r}(\mathfrak{g}_{\alpha})$ and $\mathsf{B}_i\in \Sigma_{\rm f}(\mathfrak{g}_{\beta}), i=1,\dots,m$, such that the system \eqref{bilinear} is controllable (accessible) on the Lie group $\mathbf{G}$.
\end{definition}

\section{Main Results}\label{sec:results}
\subsection{Structural Controllability over $\SO(n)$ }
The special orthogonal group,  $\SO(n)$, is  the Lie group formed by  all $\mathbb{R}^{n\times n}$ orthogonal matrices whose determinant is equal to one.   The Lie algebra of $\SO(n)$, $\so(n)$, consists of $n\times n$ real skew-symmetric matrices, which has the dimension $n(n-1)/2$. Define $B_{ij}=E_{ij}-E_{ji}$. Then the set $\mathpzc{B}=\{B_{ij}: 1\leq i < j \leq n\}$ forms a standard basis of $\so(n)$. Let $A_1,\dots,A_d$ be $d$ matrices in $\so(n)$. We introduce the pair of zero patterns  $\Sigma_{\rm zero}:=\big(\Sigma_{\rm r}(\so(n)_{\alpha}),\Sigma_{\rm f}(\so(n)_{\beta})\big)$ by
$$
\so(n)_{\alpha}:=\{A_{1},\dots,A_{d}\}, \quad \so(n)_{\beta}:=\{B_{i_1j_1},\dots,B_{i_ej_e}\},
$$
where  $B_{i_kj_k}\in\mathpzc{B}$ for $k=1,\dots,e$.

By the compactness of the Lie group $\SO(n)$, the system \eqref{bilinear} is controllable if and only if it is
accessible. Let $\mathrm{V}=\{1,2,\dots,n\}$. We first introduce the following graph representations of the zero patterns.
\begin{definition}\label{def4}
	$(i)$ The drift graph associated with $\Sigma_{\rm r}(\so(n)_{\alpha})$, denoted by ${\mathrm{G}}^{{}^\alpha}_{{\rm drift}}$, is  defined as the undirected graph $\mathrm{G}_{\rm drift}^{{}^\alpha}=\big(\mathrm{V},\mathrm{E}_{\rm drift}^{{}^\alpha}\big)$, where  $\{i,j\}\in\mathrm{E}_{\rm drift}^{{}^\alpha}$ if and only if there exists $s=1,\dots,d$ such that $[A_s]_{ij}=-[A_s]_{ji}\neq 0$. \\
	$(ii)$ The controlled   graph associated with $\Sigma_{\rm f}(\so(n)_{\beta})$, denoted by $\mathrm{G}_{\rm contr}^{{}^\beta}$, is  defined as the undirected graph $\mathrm{G}_{\rm contr}^{{}^\beta}=\big(\mathrm{V},\mathrm{E}_{\rm contr}^{{}^\beta}\big)$ with $\mathrm{E}_{\rm contr}^{{}^\beta}=\big\{\{i_1,j_1\},\dots, \{i_e,j_e\}\big\}$.
\end{definition}

A necessary condition for structural controllability of system \eqref{bilinear} is that the union graph $\mathrm{G}_{\rm drift}^{{}^\alpha} \mcup \mathrm{G}_{\rm contr}^{{}^\beta}$ must be connected  \cite{wang2020}. However, this condition is not sufficient. The following theorem gives a sufficient condition for system \eqref{bilinear} to be structurally controllable on the Lie group $\SO(n)$ with respect to the pair of zero patterns $\Sigma_{\rm zero}=\big(\Sigma_{\rm r}(\so(n)_{\alpha}),\Sigma_{\rm f}(\so(n)_{\beta})\big)$.

\begin{theorem}\label{thm1}
The system \eqref{bilinear} is structurally controllable on the Lie group $\SO(n)$ with respect to the pair of zero patterns $\Sigma_{\rm zero}=\big(\Sigma_{\rm r}(\so(n)_{\alpha}),\Sigma_{\rm f}(\so(n)_{\beta})\big)$ if each connected component of $\mathrm{G}_{\rm contr}^{{}^\beta}$ contains at least three nodes and the union graph $\mathrm{G}_{\rm drift}^{{}^\alpha} \mcup \mathrm{G}_{\rm contr}^{{}^\beta}$ is connected.
\end{theorem}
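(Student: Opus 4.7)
The plan is to invoke the Lie algebra rank condition: since $\SO(n)$ is compact, controllability and accessibility coincide, and it suffices to exhibit $\mathsf{B}_0 \in \Sigma_{\rm r}(\so(n)_\alpha)$ and $\mathsf{B}_1,\dots,\mathsf{B}_m \in \Sigma_{\rm f}(\so(n)_\beta)$ whose generated Lie algebra $\mathfrak{h}$ is all of $\so(n)$. I would take $m = e$, set $\mathsf{B}_k = B_{i_k j_k}$ for $k = 1, \dots, e$, and choose $\mathsf{B}_0 = \sum_{s=1}^d l_s A_s$ with $l_s \in \R \setminus \{0\}$ to be fixed below. Write $V_1, \dots, V_r$ for the connected components of $\mathrm{G}_{\rm contr}^{\beta}$, and use the standard decomposition $\so(n) = \bigoplus_{c=1}^r \so(V_c) \oplus \bigoplus_{a<b} \mathfrak{m}_{ab}$, where $\mathfrak{m}_{ab} := \mathrm{span}\{B_{ij} : i \in V_a,\, j \in V_b\}$.

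The first step is to show $\bigoplus_c \so(V_c) \subseteq \mathfrak{h}$. Within each component, since the controlled subgraph is connected and has at least three nodes, the bracket identity $[B_{ij}, B_{jk}] = B_{ik}$ (for distinct $i,j,k$) propagates along a spanning tree to generate every $B_{ij}$ with $i,j \in V_c$, and hence all of $\so(V_c)$; the three-node assumption is what ensures two tree edges sharing a common vertex are available to start the induction.

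The second, main step is to recover the off-block pieces $\mathfrak{m}_{ab}$. For $|V_a|, |V_b| \geq 3$, the standard representation of $\so(V_c)$ on $\R^{V_c}$ is absolutely irreducible, so $\mathfrak{m}_{ab}$ is an irreducible $\bigoplus_c \so(V_c)$-module isomorphic to the outer tensor $\R^{V_a}\boxtimes\R^{V_b}$; moreover, different pairs give non-isomorphic modules, since $\mathfrak{m}_{ab}$ is non-trivial on exactly the simple factors indexed by $a$ and $b$. Writing $\mathsf{B}_0 = \mathsf{B}_0^{\rm diag} + \sum_{a<b} \mathsf{B}_0^{(ab)}$ along the decomposition, subtracting $\mathsf{B}_0^{\rm diag} \in \bigoplus_c \so(V_c) \subseteq \mathfrak{h}$ leaves $Y := \sum_{a<b} \mathsf{B}_0^{(ab)} \in \mathfrak{h}$. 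Iterated brackets of $Y$ with $\bigoplus_c \so(V_c) \subseteq \mathfrak{h}$ generate a $\bigoplus_c \so(V_c)$-submodule of $\bigoplus_{a<b} \mathfrak{m}_{ab}$ sitting inside $\mathfrak{h}$; by irreducibility and pairwise non-isomorphism, this submodule splits as $\bigoplus_{\{a,b\} \in \mathcal{S}} \mathfrak{m}_{ab}$ with $\mathcal{S} \supseteq \{\{a,b\} : \mathsf{B}_0^{(ab)} \neq 0\}$. Whenever $V_a, V_b$ are joined by an edge of $\mathrm{G}_{\rm drift}^{\alpha}$, the map $(l_1,\dots,l_d) \mapsto \mathsf{B}_0^{(ab)}$ is a nonzero linear form, so its vanishing locus is a proper hyperplane; picking $(l_1,\dots,l_d)$ off the union of these finitely many hyperplanes together with the coordinate hyperplanes $\{l_s = 0\}$ yields a valid $\mathsf{B}_0 \in \Sigma_{\rm r}(\so(n)_\alpha)$ for which $\mathfrak{m}_{ab} \subseteq \mathfrak{h}$ holds for every drift-connected pair $\{V_a, V_b\}$.

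To finish, I would consider the meta-graph $\mathcal{M}$ on super-nodes $\{V_1,\dots,V_r\}$ whose edges correspond to component pairs joined in $\mathrm{G}_{\rm drift}^{\alpha}$; the hypothesis that $\mathrm{G}_{\rm drift}^{\alpha} \mcup \mathrm{G}_{\rm contr}^{\beta}$ is connected makes $\mathcal{M}$ connected. The identity $[\mathfrak{m}_{ab}, \mathfrak{m}_{bc}] \supseteq \mathfrak{m}_{ac}$, immediate from $[B_{ij}, B_{jk}] = B_{ik}$, then propagates along paths in $\mathcal{M}$ to give $\mathfrak{m}_{ab} \subseteq \mathfrak{h}$ for every $a \neq b$, whence $\mathfrak{h} = \so(n)$ and the system is accessible and therefore controllable on $\SO(n)$. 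The main obstacle is the module-theoretic step in the third paragraph: verifying absolute irreducibility of $\mathfrak{m}_{ab}$ and its pairwise non-isomorphism (both of which genuinely use $|V_c| \geq 3$) is what prevents the submodule generated by $Y$ from sitting "diagonally" across several $\mathfrak{m}_{ab}$'s, and is what allows a single non-zero projection $\mathsf{B}_0^{(ab)}$ to deliver the entire block $\mathfrak{m}_{ab}$; this is the precise place where the three-node hypothesis on components is essential.
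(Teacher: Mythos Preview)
Your argument is correct and takes a genuinely different route from the paper. Both proofs begin the same way: a genericity argument over the coefficients $(l_1,\dots,l_d)$ to produce an admissible drift $\mathsf{B}_0\in\Sigma_{\rm r}(\so(n)_\alpha)$ whose graph already connects the components of $\mathrm{G}_{\rm contr}^{\beta}$. The divergence is in how one then shows $\{\mathsf{B}_0\}\cup\so(n)_\beta$ generates $\so(n)$. The paper simply invokes Lemma~\ref{so.2}(i), quoted from \cite{wang2020}; the proof of that lemma (paralleling the proofs of Lemmas~\ref{gl.2} and~\ref{su.3} given here) is combinatorial, using iterated brackets with basis elements $B_{ij}$, encoded by the circumjacent-closure operation $\mathcal{H}_{ij}$, to strip the off-block part of the drift down to a single generator $B_{v^*u^*}$ linking two components, followed by an induction on the number of components. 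Your approach instead recognises each off-diagonal block $\mathfrak{m}_{ab}\cong\R^{V_a}\boxtimes\R^{V_b}$ as an absolutely irreducible $\bigoplus_c\so(V_c)$-module (this is exactly where $|V_c|\geq 3$ enters, since the standard representation of $\so(2)$ is not absolutely irreducible), observes that distinct pairs give non-isomorphic modules because their kernels differ, and then uses the resulting isotypic decomposition to conclude that a single nonzero projection $\mathsf{B}_0^{(ab)}$ forces $\mathfrak{m}_{ab}\subseteq\mathfrak{h}$. Your method is more conceptual and pinpoints precisely why the three-node hypothesis is needed; the paper's method is more elementary, avoids representation theory entirely, and is constructive in that it exhibits the explicit bracket words producing each generator.
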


\noindent{\bf Example 2.}
Consider the system \eqref{bilinear} evolving on $\SO(6)$. Let $\so(6)_\alpha=\{A_1,A_2,A_3\}$ with $A_1=2B_{14}+B_{25}$, $A_2=B_{12}-B_{15}$, and $A_3=3B_{15}+2B_{25}$. Let $\so(6)_\beta=\{B_{12},B_{23},B_{13},B_{45},B_{56},B_{46}\}$. The drift graph associated with $\Sigma_{\rm r}(\so(6)_{\alpha})$ and the controlled graph associated with $\Sigma_{\rm f}(\so(6)_{\beta})$ are shown, respectively, in Figure \ref{fig2}.

\begin{figure}[H]
\centering
\begin{minipage}[c]{0.33\textwidth}
\centering
\includegraphics[height=4cm,width=4.7cm]{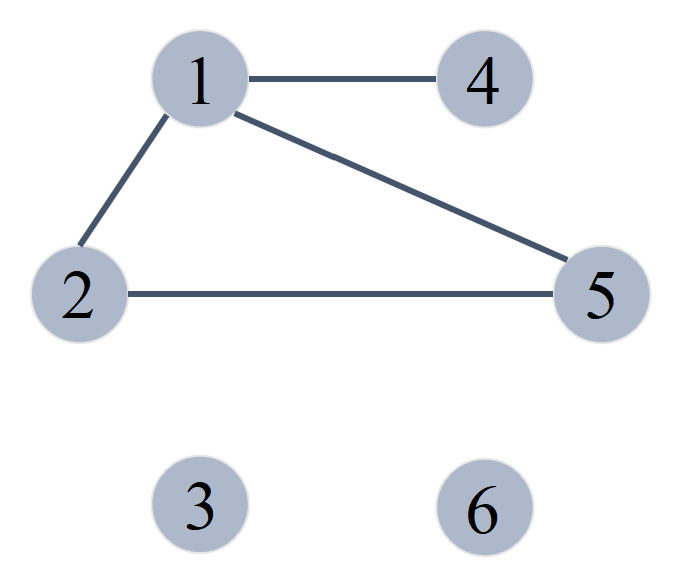}
\end{minipage}
\begin{minipage}[c]{0.33\textwidth}
\centering
\includegraphics[height=4cm,width=4.8cm]{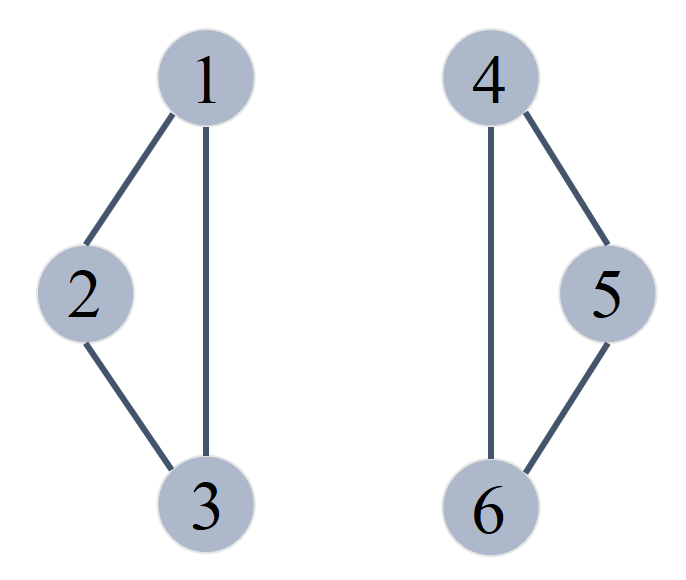}
\end{minipage}
\caption{The graph $\mathrm{G}_{\rm drift}^{{}^\alpha}$ (left) and the graph $\mathrm{G}_{\rm contr}^{{}^\beta}$ (right).}\label{fig2}
\end{figure}
It is clear that each connected component of $\mathrm{G}_{\rm contr}^{{}^\beta}$ contains at least three nodes, and the union graph $\mathrm{G}_{\rm drift}^{{}^\alpha} \mcup \mathrm{G}_{\rm contr}^{{}^\beta}$ is connected. As a result, the graphical condition of Theorem \ref{thm1} has been met. Choose $A=A_1+3A_2+A_3\in\Sigma_{\rm r}(\so(6)_{\alpha})$, and by direct computation one can verify
\begin{equation}\label{ex.2}
\{A,B_{12},B_{23},B_{45},B_{56}\}_{\rm LA}=\so(6).
\end{equation}
This implies that the system \eqref{bilinear} is structurally controllable on the Lie group $\SO(6)$ with respect to the pair of zero patterns $\Sigma_{\rm zero}=\big(\Sigma_{\rm r}(\so(6)_{\alpha}),\Sigma_{\rm f}(\so(6)_{\beta})\big)$, providing a validation of Theorem \ref{thm1}. \hfill$\square$

\subsection{Structural Accessibility over ${\rm GL}^{+}(n)$ }
Recall that the Lie algebra of ${\rm GL^{+}}(n)$ is $\gl(n)$, the set of all $n\times n$ real matrices. The set $\mathpzc{E}=\{E_{ij}: 1\leq i,j\leq n\}$ forms a basis
of $\gl(n)$. Let $A_1,\dots,A_d$ be $d$ matrices in $\gl(n)$. A pair of zero patterns  $\Sigma_{\rm zero}:=\big(\Sigma_{\rm r}(\gl(n)_{\alpha}),\Sigma_{\rm f}(\gl(n)_{\beta})\big)$ is given by
$$
\gl(n)_{\alpha}:=\{A_{1},\dots,A_{d}\}, \quad \gl(n)_{\beta}:=\{E_{i_1j_1},\dots,E_{i_ej_e}\}.
$$
We similarly introduce the following graph representations of these zero patterns, which now have to be directed graphs.
\begin{definition}\label{def5}
	$(i)$ The drift graph associated with $\Sigma_{\rm r}(\gl(n)_{\alpha})$, denoted by $\mathcal{G}_{\rm drift}^{{}^\alpha}$, is  defined as the directed graph $\mathcal{G}_{\rm drift}^{{}^\alpha}=\big(\mathrm{V},\mathcal{E}_{\rm drift}^{{}^\alpha}\big)$, where  $(i,j)\in \mathcal{E}_{\rm drift}^{{}^\alpha}$ if and only if there exists $s=1,\dots,d$ such that $[A_s]_{ij}\neq 0$. \\
	$(ii)$ The controlled   graph associated with $\Sigma_{\rm f}(\gl(n)_{\beta})$, denoted by $\mathcal{G}_{\rm contr}^{{}^\beta}$, is  defined as the directed graph $\mathcal{G}_{\rm contr}^{{}^\beta}=\big(\mathrm{V},\mathcal{E}_{\rm contr}^{{}^\beta}\big)$ with $\mathcal{E}_{\rm contr}^{{}^\beta}=\big\{(i_1,j_1),\dots, (i_e,j_e)\big\}$.
\end{definition}
Note that Lie group ${\rm GL^{+}}(n)$ is connected, but not compact. A necessary condition for the system \eqref{bilinear} to be controllable is that it is accessible. In the presence of the drift term $\mathsf{B}_0$, accessibility
(plus the connectedness of ${\rm GL^{+}}(n)$) is not sufficient for controllability. In this
case we give some sufficient conditions, and a necessary condition for the system \eqref{bilinear} to be structurally accessible on the Lie group ${\rm GL^{+}}(n)$ with respect to the pair of zero patterns $\Sigma_{\rm zero}=\big(\Sigma_{\rm r}(\gl(n)_{\alpha}),\Sigma_{\rm f}(\gl(n)_{\beta})\big)$.

When the union graph $\mathcal{G}_{\rm drift}^{{}^\alpha} \mcup \mathcal{G}_{\rm contr}^{{}^\beta}$ is not strongly connected, the system \eqref{bilinear} cannot be structurally accessible on the Lie group ${\rm GL^{+}}(n)$ with respect to the pair of zero patterns $\Sigma_{\rm zero}=\big(\Sigma_{\rm r}(\gl(n)_{\alpha}),\Sigma_{\rm f}(\gl(n)_{\beta})\big)$ \cite{wang2020}. The following theorem establishes a sufficient condition under which the system \eqref{bilinear} is structurally accessible.

\begin{theorem}\label{thm2.1}
 The system \eqref{bilinear} is structurally accessible on the Lie group ${\rm GL^{+}}(n)$ with respect to the pair of zero patterns $\Sigma_{\rm zero}=\big(\Sigma_{\rm r}(\gl(n)_{\alpha}),\Sigma_{\rm f}(\gl(n)_{\beta})\big)$ if the following conditions hold:

 $(i)$ Each weakly connected component of $\mathcal{G}_{\rm contr}^{{}^\beta}$ is strongly connected with at least two nodes;

 $(ii)$ The union graph $\mathcal{G}_{\rm drift}^{{}^\alpha} \mcup \mathcal{G}_{\rm contr}^{{}^\beta}$ is strongly connected;

 $(iii)$ $\mathcal{G}_{\rm contr}^{{}^\beta}$ has at least one self-loop.

\end{theorem}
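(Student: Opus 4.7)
The plan is to exhibit admissible realisations $\mathsf{B}_0,\ldots,\mathsf{B}_m$ of the zero patterns whose generated Lie algebra equals $\mathfrak{gl}(n)$, from which accessibility on $\mathrm{GL}^+(n)$ follows by the Lie algebra rank condition recalled in Section~\ref{sec:problem}. I would take $m=e$, set $\mathsf{B}_k=E_{i_kj_k}$ for $k=1,\dots,e$, and let $\mathsf{B}_0=\sum_{s=1}^d l_s A_s$ with coefficients $l_s\in\mathbb{R}\setminus\{0\}$ chosen generically so that the linear functional $(l_1,\dots,l_d)\mapsto [\mathsf{B}_0]_{ij}=\sum_s l_s[A_s]_{ij}$ does not vanish for any $(i,j)\in\mathcal{E}_{\rm drift}^{{}^\alpha}$; such a choice exists because the forbidden set is a finite union of proper hyperplanes of $\mathbb{R}^d$. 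Write $\mathfrak{h}$ for the generated Lie algebra; the goal is to prove $\mathfrak{h}=\mathfrak{gl}(n)$.

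My first substantive step would unpack the intra-component structure of $\mathfrak{h}$. By condition (i), every weakly connected component $C$ of $\mathcal{G}_{\rm contr}^{{}^\beta}$ is strongly connected with $|C|\geq 2$, and iterating $[E_{uv},E_{vw}]=E_{uw}$ (valid for distinct $u,v,w$) along a simple directed path in $C$ places every $E_{ij}$ with distinct $i,j\in C$ into $\mathfrak{h}$; together with $[E_{ij},E_{ji}]=E_{ii}-E_{jj}$ this yields all intra-component diagonal differences. Condition (iii) then places some $E_{pp}$ directly in the controlled basis, and combined with the diagonal differences inside the component $C_p\ni p$ this lifts to $E_{ii}\in\mathfrak{h}$ for every $i\in C_p$.

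The main work is the cross-component propagation via the drift. Fix $a\in C_p$. For any component $C_l\neq C_p$, condition (ii) furnishes a directed path in $\mathcal{G}_{\rm drift}^{{}^\alpha}\mcup\mathcal{G}_{\rm contr}^{{}^\beta}$ from $C_p$ to $C_l$; contracting intra-component traversals produces a meta-path $C_p=C_{l_0}\to C_{l_1}\to\cdots\to C_{l_r}=C_l$ in which each meta-arc is realised by a concrete drift arc $(c_j,d_j)$ with $c_j\in C_{l_j}$ and $d_j\in C_{l_{j+1}}$. I would then prove by induction on $j$ that $E_{ab}\in\mathfrak{h}$ for every $b\in C_{l_j}$, the base case $j=0$ being the conclusion of the previous paragraph. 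For the inductive step with $c_j\neq a$ (always arrangeable at $j=0$ by picking $a\in C_p\setminus\{c_0\}$, possible since $|C_p|\geq 2$), I form $N:=[E_{aa},[E_{ac_j},\mathsf{B}_0]]\in\mathfrak{h}$; applying $[E_{uv},E_{wx}]=\delta_{vw}E_{ux}-\delta_{xu}E_{wv}$ and using that $a,c_j,d_j$ sit in pairwise distinct components (so the dangerous Kronecker deltas vanish) shows that $N$ is supported solely in row $a$, with $N_{ak}=[\mathsf{B}_0]_{c_jk}$ for $k\notin\{a,c_j\}$. Picking $e\in C_{l_{j+1}}\setminus\{d_j\}$, which exists because $|C_{l_{j+1}}|\geq 2$, the bracket $[N,E_{d_je}]$ collapses to $[\mathsf{B}_0]_{c_jd_j}\,E_{ae}$, a nonzero scalar multiple of $E_{ae}$ by the genericity of $l_s$. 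Bracketing $E_{ae}$ with the intra-component elements $E_{eb}\in\mathfrak{h}$ extends this to $E_{ab}\in\mathfrak{h}$ for every $b\in C_{l_{j+1}}$, closing the induction.

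Running the symmetric argument along a reverse meta-path from $C_l$ back to $C_p$, again guaranteed by (ii), yields $E_{ba}\in\mathfrak{h}$ for every $b\in\mathrm{V}$. Then $[E_{ab},E_{ba}]=E_{aa}-E_{bb}$ places every $E_{bb}$ in $\mathfrak{h}$, and $[E_{ba},E_{ac}]=E_{bc}$ for $b\neq c$ fills in every remaining matrix unit, so $\mathfrak{h}=\mathfrak{gl}(n)$ and structural accessibility follows. The most delicate point of the plan is the inductive cross-component step: I must verify that the three nested brackets $[E_{aa},[E_{ac_j},\mathsf{B}_0]]$ followed by $[\cdot,E_{d_je}]$ annihilate every parasitic contribution and isolate exactly one matrix unit, which in turn rests on the component-separation furnished by conditions (i)--(ii) and on the cancellation-avoiding generic choice of the drift coefficients $l_s$.
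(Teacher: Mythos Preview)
Your argument is correct and follows the same high-level strategy as the paper's proof of Lemma~\ref{gl.2}: complete each strongly connected component of $\mathcal{G}_{\rm contr}^{{}^\beta}$ to obtain all intra-component matrix units, upgrade the self-loop to the full diagonal within its component, then isolate cross-component matrix units one at a time by bracketing the drift against intra-component elements. The implementations differ in the bookkeeping. The paper first subtracts the intra-component part of the drift to form a reduced matrix $\widetilde{A}$, phrases the isolation step through the circumjacent-closure operator $\mathcal{H}_{ij}$ of Definition~\ref{def8} and Lemma~\ref{lem2}, and defers the full multi-component propagation to the companion reference~\cite{arxiv}; you instead keep the full $\mathsf{B}_0$, isolate a single $E_{ae}$ via the explicit three-bracket $[\,[E_{aa},[E_{ac_j},\mathsf{B}_0]]\,,\,E_{d_je}\,]$, and carry out the meta-path induction directly. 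Your route is more self-contained, while the paper's circumjacent-closure language makes the graph-theoretic content of each bracket step more visible. One small point to tighten: you fix $a\in C_p$ before choosing the meta-path, so the parenthetical ``arrangeable at $j=0$ by picking $a\in C_p\setminus\{c_0\}$'' is slightly circular, since $c_0$ depends on the target component. The clean fix is to note that once any $E_{a'e}$ with $a'\in C_p$ lands in $\mathfrak{h}$, bracketing with the intra-component element $E_{aa'}$ yields $E_{ae}$ as well, so the particular choice of $a$ within $C_p$ is immaterial.
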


In particular, when $\gl(n)_{\alpha}$ is a subset of $\mathpzc{E}$, the condition that $\mathcal{G}_{\rm contr}^{{}^\beta}$ has at least one self-loop can be relaxed.

\begin{theorem}\label{thm2.2}
 Suppose $\gl(n)_{\alpha}\subseteq\mathpzc{E}$. Then the system \eqref{bilinear} is structurally accessible on the Lie group ${\rm GL^{+}}(n)$ with respect to the pair of zero patterns $\Sigma_{\rm zero}=\big(\Sigma_{\rm r}(\gl(n)_{\alpha}),\Sigma_{\rm f}(\gl(n)_{\beta})\big)$ if each weakly connected component of $\mathcal{G}_{\rm contr}^{{}^\beta}$ is strongly connected with at least two nodes, and the union graph $\mathcal{G}_{\rm drift}^{{}^\alpha}\mcup \mathcal{G}_{\rm contr}^{{}^\beta}$ is a strongly connected digraph with at least one self-loop.
\end{theorem}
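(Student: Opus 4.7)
The plan is to choose $m = e$, $\mathsf{B}_i = E_{r_i s_i}$ for $i = 1, \dots, e$ (ranging over the arcs of $\mathcal{G}_{\rm contr}^{{}^\beta}$), and $\mathsf{B}_0 = \sum_{s=1}^{d} l_s E_{p_s q_s}$ with coefficients $l_s \in \mathbb{R} \setminus \{0\}$ to be selected generically, where $(p_s, q_s)$ ranges over the arcs of $\mathcal{G}_{\rm drift}^{{}^\alpha}$. The hypothesis $\gl(n)_\alpha \subseteq \mathpzc{E}$ is what makes this choice admissible: each drift generator $A_s$ is itself a single basis element $E_{p_s q_s}$, hence $\mathsf{B}_0 \in \Sigma_{\rm r}(\gl(n)_\alpha)$ whenever all $l_s \neq 0$. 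By the algebraic criterion for accessibility recalled in Section~\ref{sec:problem}, it then suffices to show that for some such $(l_1, \dots, l_d)$, $\mathcal{L} := \{\mathsf{B}_0, \mathsf{B}_1, \dots, \mathsf{B}_e\}_{\rm LA} = \gl(n)$.

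First I would analyse the Lie algebra generated by the controls alone. Each weakly connected component $C$ of $\mathcal{G}_{\rm contr}^{{}^\beta}$ is strongly connected with $|C| \geq 2$, so the commutator identities $[E_{ij}, E_{jk}] = E_{ik}$ (for $i \neq k$) and $[E_{ij}, E_{ji}] = E_{ii} - E_{jj}$, applied iteratively along directed paths inside $C$, yield every off-diagonal $E_{ij}$ with $i, j \in C$, $i \neq j$, together with every difference $E_{ii} - E_{jj}$ with $i, j \in C$. Hence the controls-only subalgebra of $\mathcal{L}$ contains $\bigoplus_{C} \mathfrak{sl}(|C|)$ embedded block-wise in $\gl(n)$.

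The key step is to extract each off-diagonal drift basis element $E_{p_s q_s}$ (with $p_s \neq q_s$) from $\mathsf{B}_0$. I would exploit that each diagonal element $H_{ij} := E_{ii} - E_{jj}$ in the controls-only subalgebra acts on the monomial basis through $\mathrm{ad}_{H_{ij}}(E_{ab}) = \big[(\delta_{ia} - \delta_{ib}) - (\delta_{ja} - \delta_{jb})\big] E_{ab}$, so the $H_{ij}$ form a commuting family of simultaneously diagonalised operators. Using pairs $(i,j)$ within a common controlled component for one endpoint of each drift arc, and then pairs within another controlled component for the other endpoint, one checks that the joint eigenvalue signatures of distinct off-diagonal drift arcs are pairwise distinct, so a Vandermonde-type argument on generic $l_s$ lets each $E_{p_s q_s}$ be recovered individually as an element of $\mathcal{L}$. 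Combined with strong connectivity of $\mathcal{G}_{\rm drift}^{{}^\alpha} \mcup \mathcal{G}_{\rm contr}^{{}^\beta}$, iteration of $[E_{ij}, E_{jk}] = E_{ik}$ along directed paths in the union produces $E_{ij}$ for every pair $i \neq j$, so $\mathfrak{sl}(n) \subseteq \mathcal{L}$.

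To conclude I would use the self-loop hypothesis. If $(p,p) \in \mathcal{E}_{\rm contr}^{{}^\beta}$, then $E_{pp}$ is one of the $\mathsf{B}_i$'s and hence lies in $\mathcal{L}$. Otherwise $(p,p) \in \mathcal{E}_{\rm drift}^{{}^\alpha}$ only, and here the rigidity $\gl(n)_\alpha \subseteq \mathpzc{E}$ is essential: the trace $\mathrm{tr}(\mathsf{B}_0) = \sum_{s : p_s = q_s} l_s$ is a nontrivial linear combination of $(l_1,\dots,l_d)$ (since there is at least one drift self-loop), hence nonzero for generic $l$, so $\mathsf{B}_0 \notin \mathfrak{sl}(n)$. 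In either case $\mathcal{L}$ contains an element outside the codimension-one ideal $\mathfrak{sl}(n)$, and therefore $\mathcal{L} = \gl(n)$, giving structural accessibility. I expect the main obstacle to be making the extraction step rigorous: one must show that for Zariski-generic $(l_1, \dots, l_d) \in (\mathbb{R}^*)^d$, the iterated brackets of $\mathsf{B}_0$ against the controls-only subalgebra actually span the full set $\{E_{p_s q_s} : p_s \neq q_s\}$, which Theorem~\ref{thm2.1} circumvented by having an $E_{pp}$ already available as a controlled self-loop splitter, but which here forces an argument internal to $\mathsf{B}_0$ that is enabled precisely by the rigidity $\gl(n)_\alpha \subseteq \mathpzc{E}$.
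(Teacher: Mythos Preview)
Your overall architecture is reasonable and the handling of the self-loop at the end is correct, but the central extraction step has a genuine gap. The claim that ``the joint eigenvalue signatures of distinct off-diagonal drift arcs are pairwise distinct'' is false in general. Take two controlled components $C_1=\{1,2\}$ and $C_2=\{3,4\}$ (each of the minimum allowed size~$2$), and consider the drift arcs $(1,3)$ and $(4,2)$. The only available diagonal operators are $H_{12}$ and $H_{34}$, and one computes
\[
\mathrm{ad}_{H_{12}}(E_{13})=E_{13},\quad \mathrm{ad}_{H_{12}}(E_{42})=E_{42},\quad
\mathrm{ad}_{H_{34}}(E_{13})=-E_{13},\quad \mathrm{ad}_{H_{34}}(E_{42})=-E_{42},
\]
so $E_{13}$ and $E_{42}$ share the signature $(1,-1)$. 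Your Vandermonde argument therefore produces only the combination $l_1E_{13}+l_2E_{42}$, not the individual terms. Worse, bracketing this combination against the off-diagonal controls $E_{12},E_{21},E_{34},E_{43}$ never separates the two coefficients either: one always obtains elements of the form $l_1X+l_2Y$ with $X$ in the $(C_1,C_2)$ block and $Y$ in the $(C_2,C_1)$ block (these are two distinct irreducible $\mathfrak{sl}_2\times\mathfrak{sl}_2$-modules, and the isotypic projection is not available inside $\mathcal{L}$).

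The paper's proof avoids this obstacle by a different mechanism. Rather than attempting a spectral separation of drift arcs, it first isolates the \emph{diagonal} part $\sum a_kE_{i_ki_k}$ of the drift inside $\mathcal{L}$ via an explicit double-bracket identity (Step~1 of Lemma~\ref{gl.3}), then moves these diagonal pieces freely within each controlled component (Step~2), and finally uses these diagonal elements together with the circumjacent-closure construction $\mathcal{H}_{ij}(\cdot)$ to produce a single cross-component basis element $E_{v^\ast u^\ast}$ (Step~3). The point is that once a genuine $E_{pp}$ (or more generally a diagonal element with nonzero trace and a specific support) is in hand, it breaks the symmetry you ran into: $[E_{11},\,l_1E_{13}+l_2E_{42}]=l_1E_{13}$. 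Your outline defers the self-loop to the very end, after $\mathfrak{sl}(n)$ is already obtained; the paper instead exploits the diagonal part of the drift \emph{during} the extraction, which is exactly what makes the argument go through when all controlled components have size~$2$.
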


\noindent{\bf Example 3.}
Consider the system \eqref{bilinear} evolving on ${\rm GL^{+}}(4)$. Let $\gl(4)_\alpha=\{A_1,A_2,A_3\}$ with $A_1=3E_{13}+E_{42}$, $A_2=E_{12}-E_{13}$, and $A_3=E_{33}-E_{44}+2E_{31}$. Let $\gl(4)_\beta=\{E_{11},E_{12},E_{21},E_{34},E_{43}\}$. The drift graph associated with $\Sigma_{\rm r}(\gl(4)_{\alpha})$ and the controlled graph associated with $\Sigma_{\rm f}(\gl(4)_{\beta})$ are shown, respectively, in Figure \ref{fig3}.
\begin{figure}[H]
\centering
\begin{minipage}[c]{0.33\textwidth}
\centering
\includegraphics[height=3.4cm,width=4.2cm]{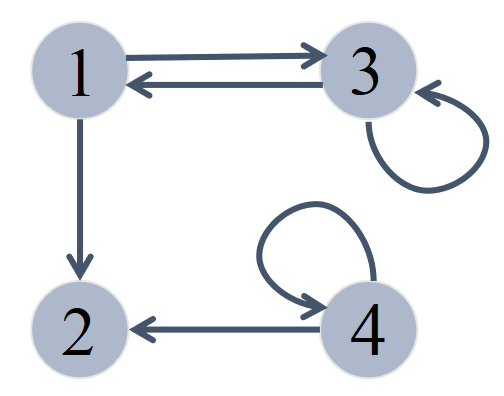}
\end{minipage}
\begin{minipage}[c]{0.33\textwidth}
\centering
\includegraphics[height=3.4cm,width=3.9cm]{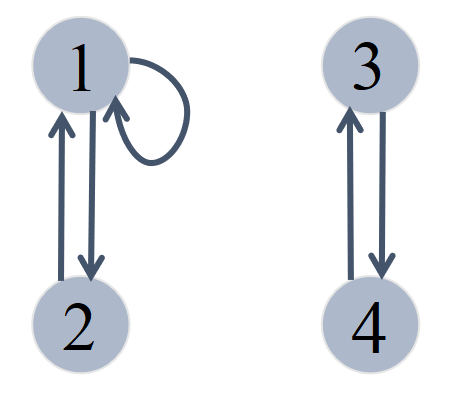}
\end{minipage}
\caption{The graph $\mathcal{G}_{\rm drift}^{{}^\alpha}$ (left) and the graph $\mathcal{G}_{\rm contr}^{{}^\beta}$ (right).}\label{fig3}
\end{figure}
 (i) It is easily seen that each weakly connected component of $\mathcal{G}_{\rm contr}^{{}^\beta}$ is strongly connected with two nodes. In addition, $\mathcal{G}_{\rm contr}^{{}^\beta}$ has one self-loop and the union graph $\mathcal{G}_{\rm drift}^{{}^\alpha} \mcup \mathcal{G}_{\rm contr}^{{}^\beta}$ is  strongly connected. Hence, the graphical condition of Theorem \ref{thm2.1} has been met. Choose $A=A_1+2A_2+A_3\in\Sigma_{\rm r}(\gl(4)_{\alpha})$. By direct computation one can verify $\{A,E_{11},E_{12},E_{21},E_{34},E_{43}\}_{\rm LA}=\gl(4)$. Thus, the system \eqref{bilinear} is structurally accessible on the Lie group ${\rm GL^{+}}(4)$ with respect to the pair of zero patterns $\Sigma_{\rm zero}=\big(\Sigma_{\rm r}(\gl(4)_{\alpha}),\Sigma_{\rm f}(\gl(4)_{\beta})\big)$. This provides a validation of Theorem \ref{thm2.1}.

(ii) Let $\gl(4)_\beta=\{E_{12},E_{21},E_{34},E_{43}\}$. Then $\mathcal{G}_{\rm contr}^{{}^\beta}$ has no self-loops. Note that $\mathcal{G}_{\rm drift}^{{}^\alpha}$ has two self-loops. A direct verification shows that
$\{A,E_{12},E_{21},E_{34},E_{43}\}_{\rm LA}=\mathfrak{sl}(4)$ for any $A\in\Sigma_{\rm r}(\gl(4)_{\alpha})$. This is to say, for any integer $m\geq 1$, and for any $\mathsf{B}_0\in \Sigma_{\rm r}(\gl(4)_{\alpha})$, $\mathsf{B}_i\in \Sigma_{\rm f}(\gl(4)_{\beta})$, we have $\{\mathsf{B}_0,\mathsf{B}_1,\dots,\mathsf{B}_m\}_{\rm LA}\neq \gl(4)$. Therefore, the system \eqref{bilinear} is not structurally accessible on the Lie group ${\rm GL^{+}}(4)$ with respect to the pair of zero patterns $\Sigma_{\rm zero}=\big(\Sigma_{\rm r}(\gl(4)_{\alpha}),\Sigma_{\rm f}(\gl(4)_{\beta})\big)$. Generally speaking, the self-loop of $\mathcal{G}_{\rm drift}^{{}^\alpha}$ cannot replace the role of the self-loop of $\mathcal{G}_{\rm contr}^{{}^\beta}$, which is why we require the set $\gl(n)_{\alpha}$ to be a subset of $\mathpzc{E}$ in Theorem \ref{thm2.2}. \hfill$\square$

\medskip

\noindent{\bf Example 4.} Consider the system \eqref{bilinear} evolving on ${\rm GL^{+}}(4)$. Let $\gl(4)_\alpha=\{E_{12},E_{13},E_{31},E_{33},E_{42}\}$, $\gl(4)_\beta=\{E_{12},E_{21},E_{34},E_{43}\}$. The drift graph associated with $\Sigma_{\rm r}(\gl(4)_{\alpha})$ and the controlled graph associated with $\Sigma_{\rm f}(\gl(4)_{\beta})$ are shown, respectively, in Figure \ref{fig4}.
\begin{figure}[H]
\centering
\begin{minipage}[c]{0.33\textwidth}
\centering
\includegraphics[height=3.4cm,width=4.2cm]{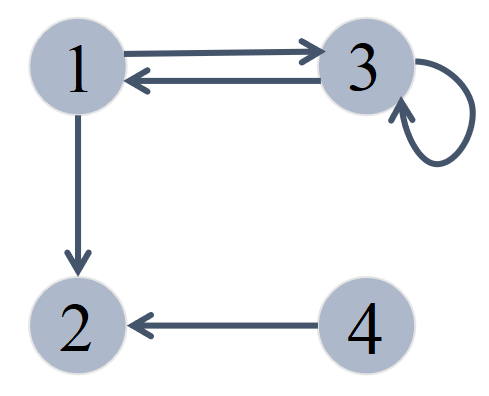}
\end{minipage}
\begin{minipage}[c]{0.33\textwidth}
\centering
\includegraphics[height=3.4cm,width=3.8cm]{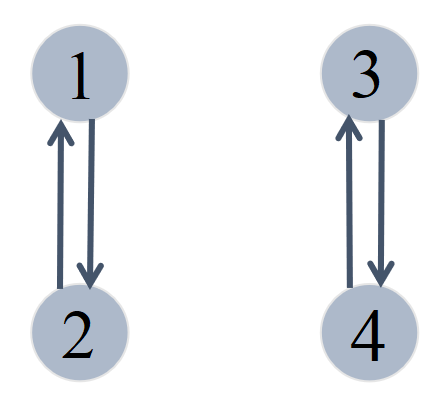}
\end{minipage}
\caption{The graph $\mathcal{G}_{\rm drift}^{{}^\alpha}$ (left) and the graph $\mathcal{G}_{\rm contr}^{{}^\beta}$ (right).}\label{fig4}
\end{figure}
Each weakly connected component of $\mathcal{G}_{\rm contr}^{{}^\beta}$ is strongly connected with two nodes. The union graph $\mathcal{G}_{\rm drift}^{{}^\alpha} \mcup \mathcal{G}_{\rm contr}^{{}^\beta}$ continues to be strongly connected. Moreover, $\mathcal{G}_{\rm drift}^{{}^\alpha}$ has a self-loop. Therefore, the graphical condition of Theorem \ref{thm2.2} is satisfied. For any $A\in\Sigma_{\rm r}(\gl(4)_{\alpha})$, we have $A=l_1E_{12}+l_2E_{13}+l_3E_{31}+l_4E_{33}+l_5E_{42}$ with $l_s\neq0\in\mathbb{R}$ for $s=1,\dots,5$. Write $\widetilde{A}=A-l_1E_{12}$.  Then $A':=\widetilde{A}-[[\widetilde{A},E_{21}],E_{12}]=l_3E_{31}+l_4E_{33}$, and $A'-[[A',E_{12}],E_{21}]=l_4E_{33}$. We thus get $E_{33}\in\{A,E_{12},E_{21},E_{34},E_{43}\}_{\rm LA}$. This allows us to further obtain $\{A,E_{12},E_{21},E_{34},E_{43}\}_{\rm LA}=\gl(4)$. As a result, the system \eqref{bilinear} is structurally accessible on the Lie group ${\rm GL^{+}}(4)$ with respect to the pair of zero patterns $\Sigma_{\rm zero}=\big(\Sigma_{\rm r}(\gl(4)_{\alpha}),\Sigma_{\rm f}(\gl(4)_{\beta})\big)$. This provides a validation of Theorem \ref{thm2.2}. \hfill$\square$

\subsection{Structural Controllability over ${\rm SU}(n)$ }
The special unitary group, ${\rm SU}(n)$, is the Lie group formed by all $\mathbb{C}^{n\times n}$ unitary matrices with determinant one. The Lie algebra of ${\rm SU}(n)$ is equal
to $\su(n)$, the set of all $\mathbb{C}^{n\times n}$ skew-Hermitian matrices with trace zero. Lie algebra $\su(n)$ is a finite dimensional vector space over $\mathbb{R}$. Let $\sym(n)$ denote the vector space containing all real traceless symmetric matrices. The Cartan-like decomposition of $\su(n)$ is given by $\su(n)= \so(n)\oplus {\rm i}\sym(n)$,  where ${\rm i}$ is the imaginary unit. It follows easily that the dimension of $\su(n)$ is equal to $n^2-1$. Let $C_{ij}={\rm i}(E_{ij}+E_{ji})$, $D_{ij}={\rm i}(E_{ii}-E_{jj})$. Define $\mathpzc{C}=\{C_{ij}: 1\leq i< j\leq n\}$, $\mathpzc{D} = \{D_{ij}: 1\leq i< j\leq n\}$. The maximal linearly independent set of $\mathpzc{D}$, denoted by $\mathpzc{D}_L$, contains $n-1$ elements. Then $\mathpzc{B}\mcup\mathpzc{C}\mcup\mathpzc{D}_L$ forms a basis of $\su(n)$.

Let $A_1,\dots,A_d$ be $d$ matrices in $\su(n)$. Let ${\rm S}_1\subseteq \mathpzc{B}$, ${\rm S}_2\subseteq \mathpzc{C}$, ${\rm S}_3\subseteq \mathpzc{D}$ be three subsets of $\su(n)$. We introduce
$$
\su(n)_{\alpha}:=\{A_{1},\dots,A_{d}\}, \quad \su(n)_{\beta}:={\rm S}_1\mcup{\rm S}_2\mcup{\rm S}_3,
$$
so a pair of zero patterns $\Sigma_{\rm zero}:=\big(\Sigma_{\rm r}(\su(n)_{\alpha}),\Sigma_{\rm f}(\su(n)_{\beta})\big)$ is induced.
Set $\textbf{C}=\{{\rm Blue, Red, Green}\}$. The graphs corresponding to the zero patterns $\Sigma_{\rm r}(\su(n)_{\alpha})$ and $\Sigma_{\rm f}(\su(n)_{\beta})$ are edge-colored multigraphs.
\begin{definition}\label{def6}
	$(i)$ The drift graph associated with $\Sigma_{\rm r}(\su(n)_{\alpha})$, denoted by $\mathscr{G}_{\rm drift}^{{}^\alpha}$, is  defined as the edge-colored multigraph $\mathscr{G}_{\rm drift}^{{}^\alpha}=\big(\mathrm{V},\mathscr{E}_{\rm drift}^{{}^\alpha}\big)$ with $\mathscr{E}_{\rm drift}^{{}^\alpha}=\mathscr{E}_{\rm Blue}\mcup\mathscr{E}_{\rm Red}\mcup\mathscr{E}_{\rm Green}$, where  $\{i,j;{\rm Blue}\}\in \mathscr{E}_{\rm Blue}$ if and only if there exists $s=1,\dots,d$ such that ${\rm Re}([A_s]_{ij})=-{\rm Re}([A_s]_{ji})\neq 0$; $\{i,j;{\rm Red}\}\in \mathscr{E}_{\rm Red}$ if and only if there exists $s=1,\dots,d$ such that ${\rm Im}([A_s]_{ij})={\rm Im}([A_s]_{ji})\neq 0$; and $\{i,i;{\rm Green}\}\in \mathscr{E}_{\rm Green}$ if and only if there exists $s=1,\dots,d$ such that $[A_s]_{ii}\neq 0$. \\
	$(ii)$ The controlled graph associated with $\Sigma_{\rm f}(\su(n)_{\beta})$, denoted by $\mathscr{G}_{\rm contr}^{{}^\beta}$, is  defined as the edge-colored multigraph $\mathscr{G}_{\rm contr}^{{}^\beta}=\big(\mathrm{V},\mathscr{E}_{\rm contr}^{{}^\beta}\big)$ with $\mathscr{E}_{\rm contr}^{{}^\beta}=\mathscr{E}_{{\rm S}_1}\mcup\mathscr{E}_{{\rm S}_2}\mcup\mathscr{E}_{{\rm S}_3}$, where  $\mathscr{E}_{{\rm S}_1}=\big\{\{i,j;{\rm Blue}\}: B_{ij}\in{\rm S}_1\big\}, \mathscr{E}_{{\rm S}_2}=\big\{\{i,j;{\rm Red}\}: C_{ij}\in{\rm S}_2\big\}$, and $\mathscr{E}_{{\rm S}_3}=\big\{\{k,k;{\rm Green}\}: k=i,j, D_{ij}\in{\rm S}_3\big\}$.
\end{definition}

Similar to ${\rm SO}(n)$, ${\rm SU}(n)$ is also connected and compact, the controllability analysis of the system \eqref{bilinear}  on  ${\rm SU}(n)$ relies on the investigation of the equivalence
between the system Lie algebra and the underlying Lie algebra $\su(n)$. In this section we shall look for necessary and sufficient conditions for the system \eqref{bilinear} to be structurally controllable on the Lie group ${\rm SU}(n)$ with respect to the pair of zero patterns $\Sigma_{\rm zero}=\big(\Sigma_{\rm r}(\su(n)_{\alpha}),\Sigma_{\rm f}(\su(n)_{\beta})\big)$.

Note that, for the graph $\mathscr{G}_{\rm contr}^{{}^\beta}$, there are two cases (a) $\mathscr{G}_{\rm contr}^{{}^\beta}$ is connected; (b) $\mathscr{G}_{\rm contr}^{{}^\beta}$ is not connected. First of all, when $\mathscr{G}_{\rm contr}^{{}^\beta}$ is connected, we have the following theorem.

\begin{theorem}\label{thm4}
Suppose $\mathscr{G}_{\rm contr}^{{}^\beta}$ is connected. Then the system \eqref{bilinear} is structurally controllable on the Lie group ${\rm SU}(n)$ with respect to the pair of zero patterns $\Sigma_{\rm zero}=\big(\Sigma_{\rm r}(\su(n)_{\alpha}),\Sigma_{\rm f}(\su(n)_{\beta})\big)$ if and only if the union graph $\mathscr{G}_{\rm drift}^{{}^\alpha}\mcup\mathscr{G}_{\rm contr}^{{}^\beta}$ has a self-loop or a cycle with an odd number of Red edges.
\end{theorem}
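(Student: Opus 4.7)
Because $\SU(n)$ is compact and connected, structural controllability coincides with accessibility, and thus by the Lie algebra rank condition reduces to exhibiting (respectively ruling out) admissible $\mathsf{B}_0,\dots,\mathsf{B}_m$ with $\{\mathsf{B}_0,\dots,\mathsf{B}_m\}_{\rm LA}=\su(n)$. I will treat the two directions separately.

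\emph{Necessity.} Assume the union graph $\mathscr{G}_{\rm drift}\mcup\mathscr{G}_{\rm contr}$ has no self-loop and every cycle contains an even number of Red edges. The latter is equivalent to the $\mathbb{Z}/2$-valued edge colouring (Blue $\mapsto 0$, Red $\mapsto 1$) being a coboundary on the union graph, so on each connected component there is a bipartition $\mathrm{V}=\mathrm{V}_0\sqcup\mathrm{V}_1$ placing all Blue edges within one part and all Red edges across parts; the absence of self-loops forces ${\rm S}_3=\emptyset$ and $[A_s]_{kk}=0$ for all $s,k$, so no $D$-type basis element appears in any admissible generator. Defining the diagonal unitary $T=\operatorname{diag}(t_k)\in\U(n)$ by $t_k=1$ on $\mathrm{V}_0$ and $t_k={\rm i}$ on $\mathrm{V}_1$, a direct calculation yields $T^{-1}B_{ij}T=B_{ij}$ for Blue (within-part) edges and $T^{-1}C_{ij}T=\pm B_{ij}$ for Red (cross-part) edges, so each admissible $\mathsf{B}_j$ is conjugated into $\so(n)$. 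Hence $\{\mathsf{B}_0,\dots,\mathsf{B}_m\}_{\rm LA}\subseteq T\,\so(n)\,T^{-1}$, a proper subalgebra of $\su(n)$, and the system cannot be structurally controllable.

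\emph{Sufficiency.} Suppose $\mathscr{G}_{\rm contr}$ is connected and the union graph has either a self-loop or an odd-Red cycle. I would choose each control $\mathsf{B}_i$ to be a single basis element drawn from ${\rm S}_1\mcup{\rm S}_2\mcup{\rm S}_3$ and take $\mathsf{B}_0=\sum_{s=1}^d l_s A_s$ with generic $l_s\neq 0$, then verify $\{\mathsf{B}_0,\dots,\mathsf{B}_m\}_{\rm LA}=\su(n)$ in three stages. First, the identities $[B_{ij},B_{jk}]=B_{ik}$, $[B_{ij},C_{jk}]=C_{ik}$, and $[C_{ij},C_{jk}]=-B_{ik}$ for distinct indices show that iterated commutators along any walk in $\mathscr{G}_{\rm contr}$ produce a ``virtual'' $B_{ik}$ or $C_{ik}$ determined by the parity of Red edges along the walk, so connectivity of $\mathscr{G}_{\rm contr}$ supplies at least one such virtual edge between every pair $\{i,k\}$. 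Second, I would extract one diagonal $D_{ij}$: a Green self-loop in $\mathscr{G}_{\rm contr}$ already is such a control; an odd-Red cycle inside $\mathscr{G}_{\rm contr}$ yields both $B_{i_1j_1}$ and $C_{i_1j_1}$ on a common pair via nested brackets around the cycle, after which $[B_{i_1j_1},C_{i_1j_1}]=2D_{i_1j_1}\neq 0$; if the qualifying feature requires edges from $\mathscr{G}_{\rm drift}$, repeated brackets of $\mathsf{B}_0$ against the virtual edges of the first stage isolate the needed diagonal or off-diagonal piece of $\mathsf{B}_0=\sum_s l_s A_s$, with genericity of $(l_1,\dots,l_d)$ preventing cancellations. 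Third, once a single $D_{ij}$ is available, the identities $[D_{ij},B_{ij}]=2C_{ij}$, $[D_{ij},B_{ik}]=C_{ik}$, $[D_{ij},B_{jk}]=-C_{jk}$, $[D_{ij},C_{ik}]=-B_{ik}$, and $[B_{ij},C_{ij}]=2D_{ij}$ propagate through the virtual edges to recover the full basis $\mathpzc{B}\mcup\mathpzc{C}\mcup\mathpzc{D}_L$ of $\su(n)$.

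The principal obstacle lies in the second stage when the qualifying self-loop or odd-Red cycle is supplied only after including $\mathscr{G}_{\rm drift}$: extracting a usable basis component from the rigid aggregate $\mathsf{B}_0=\sum_s l_s A_s$ requires ensuring that nested brackets with controls do not accidentally cancel contributions from different $A_s$'s. I plan to handle this through a genericity argument: the dimension of $\{\mathsf{B}_0,\dots,\mathsf{B}_m\}_{\rm LA}$ is a lower-semicontinuous function of $(l_1,\dots,l_d)\in(\R^*)^d$, so once the constructive manipulations above exhibit any single parameter vector at which the dimension equals $n^2-1$, the equality persists on an open dense subset of parameters and delivers structural controllability as required.
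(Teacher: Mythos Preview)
Your necessity argument is correct and more conceptual than the paper's. By recognising the no-odd-Red-cycle condition as the Blue/Red colouring being a $\mathbb{Z}/2$-coboundary, you produce a bipartition and a diagonal unitary $T$ that conjugates every admissible generator into $\so(n)$. The paper instead works combinatorially: it defines an edge-coloured transitive closure $\mathcal{T}$, shows inductively that $\mathcal{T}^k(\mathscr{G}_{\mathrm{S}})$ never acquires an odd-Red cycle, and concludes that the closure stabilises to a complete graph $\mathscr{G}_{\mathrm{S}^z}$ with exactly one colour per edge, so $\{\mathrm{S}\}_{\rm LA}=\operatorname{span}(\mathrm{S}^z)$ is only $n(n-1)/2$-dimensional and already contains every admissible $\mathsf{B}_0$. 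Your route has the added benefit of naming this subalgebra explicitly as $T\,\so(n)\,T^{-1}$.

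Your sufficiency outline, however, has a genuine gap in stage~2 when the qualifying self-loop or odd-Red cycle involves drift edges. You assert that repeated brackets of $\mathsf{B}_0$ against the virtual edges ``isolate the needed diagonal or off-diagonal piece,'' but no bracket sequence is given, and genericity of $(l_1,\dots,l_d)$ does not by itself furnish one: the obstacle is structural---each bracket with a virtual $F_{ij}$ moves many components of $\mathsf{B}_0$ simultaneously---not a matter of numerical cancellation among the $l_s$. Your semicontinuity remark then begs the question, since it presupposes the single working parameter vector you have not constructed. The paper closes this gap by forming $\widetilde{A}=\mathsf{B}_0-(\text{projection onto }\operatorname{span}\mathrm{S}^z)$, noting $\widetilde{A}\neq 0$ under the hypothesis, and then iterating a graph-theoretic ``circumjacent closure'' operation $\mathcal{H}_{ij}$ that provably shrinks the edge support of $\psi(\widetilde{A})$ down to a single edge $F_{i^*j^*}\notin \mathrm{S}^z$; their Proposition~1 then gives $\su(n)$. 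A cleaner completion, which dovetails with your own necessity argument: since the adjoint action of $\so(n)$ on ${\rm i}\,\sym(n)$ is irreducible over $\R$, the subalgebra $T\,\so(n)\,T^{-1}$ is \emph{maximal} in $\su(n)$, so it suffices to exhibit one $l\in(\R^*)^d$ with $\mathsf{B}_0(l)\notin T\,\so(n)\,T^{-1}$; the hypothesis forces some $A_s$ outside, hence the linear condition $\mathsf{B}_0(l)\in T\,\so(n)\,T^{-1}$ cuts a proper subspace of $\R^d$, avoided by generic $l$.
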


It is worth pointing out that a multi-edge $\big\{\{i,j;{\rm Blue}\},\{i,j;{\rm Red}\}\big\}$ in $\mathscr{G}_{\rm drift}^{{}^\alpha}\mcup\mathscr{G}_{\rm contr}^{{}^\beta}$ can form a cycle with an odd number of Red edges. We now turn to the case when $\mathscr{G}_{\rm contr}^{{}^\beta}$ is not connected. An obvious necessary condition for structural controllability is that the union graph $\mathscr{G}_{\rm drift}^{{}^\alpha}\mcup\mathscr{G}_{\rm contr}^{{}^\beta}$ must be connected. This condition is however not sufficient.

\begin{theorem}\label{thm5}
Suppose $\mathscr{G}_{\rm contr}^{{}^\beta}$ is not connected. Then the system \eqref{bilinear} is structurally controllable on the Lie group ${\rm SU}(n)$ with respect to the pair of zero patterns $\Sigma_{\rm zero}=\big(\Sigma_{\rm r}(\su(n)_{\alpha}),\Sigma_{\rm f}(\su(n)_{\beta})\big)$ if the following conditions hold:

$(i)$ Each connected component of $\mathscr{G}_{\rm contr}^{{}^\beta}$ contains at least three nodes;

$(ii)$  $\mathscr{G}_{\rm drift}^{{}^\alpha}$ has no multi-edges and the union graph $\mathscr{G}_{\rm drift}^{{}^\alpha}\mcup\mathscr{G}_{\rm contr}^{{}^\beta}$ is connected;

$(iii)$ $\mathscr{G}_{\rm drift}^{{}^\alpha}\mcup\mathscr{G}_{\rm contr}^{{}^\beta}$ has a self-loop or a cycle with an odd number of Red edges.
\end{theorem}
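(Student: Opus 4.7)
The plan is to exhibit a specific configuration within $\Sigma_{\rm zero}$ for which the system Lie algebra $\mathfrak{L} := \{\mathsf{B}_0, \mathsf{B}_1, \ldots, \mathsf{B}_m\}_{\rm LA}$ equals $\su(n)$. I would take $m = |\su(n)_\beta|$ with each $\mathsf{B}_i$ ($i \geq 1$) set to a distinct basis element of $\su(n)_\beta$ (allowed under the free zero pattern), and write $\mathsf{B}_0 = \sum_{s=1}^d l_s A_s$ with $l_s \neq 0$ to be chosen generically. Expanding $\mathsf{B}_0$ in the $\mathpzc{B} \mcup \mathpzc{C} \mcup \mathpzc{D}_L$ basis, for $(l_1, \ldots, l_d)$ in general position (avoiding a finite union of hyperplanes where some coefficient vanishes) each basis element corresponding to an edge of $\mathscr{G}_{\rm drift}^{{}^\alpha}$ appears in $\mathsf{B}_0$ with a nonzero coefficient.

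The core step is to combine the controlled subalgebra $\mathfrak{L}_\beta := \{\mathsf{B}_1, \ldots, \mathsf{B}_m\}_{\rm LA}$ with the drift to recover all $B_{ij}$. Inside each connected component $V_c$ of $\mathscr{G}_{\rm contr}^{{}^\beta}$ (which has $|V_c| \geq 3$), iterated use of the identities
\[
[B_{ij}, B_{jk}] = B_{ik}, \qquad [B_{ij}, C_{jk}] = C_{ik}, \qquad [C_{ij}, C_{jk}] = -B_{ik}
\]
along the edges of $V_c$ produces a rich local subalgebra, which in the purely-Blue case recovers the full $\so(|V_c|)$ as in Theorem \ref{thm1}. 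Next, iteratively bracket $\mathsf{B}_0$ with elements of $\mathfrak{L}_\beta$. The no-multi-edges condition on $\mathscr{G}_{\rm drift}^{{}^\alpha}$ ensures that each unordered pair $\{i,j\}$ carries at most one color in the drift, so these brackets yield linear combinations whose decomposition into basis elements is controlled; combined with the generic choice of $l_s$ and the connectivity of $\mathscr{G}_{\rm drift}^{{}^\alpha} \mcup \mathscr{G}_{\rm contr}^{{}^\beta}$, one can isolate individual drift basis elements and then propagate (via the same identities) to place every $B_{ij}$ into $\mathfrak{L}$, yielding the full $\so(n)$.

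Finally I would use condition (iii) to secure a diagonal in $\mathfrak{L}$. A Green self-loop at node $i$ forces some $D_{ij}$ into $\mathsf{B}_0$ (with nonzero coefficient under the generic choice), extractable via brackets with the $B$'s already in $\mathfrak{L}$. For an odd-Red cycle $v_1 \to v_2 \to \cdots \to v_k = v_1$, walking around and iteratively bracketing edges uses $[B,B] = B$, $[B,C] = C$, and $[C,C] = -B$: the parity of Red edges determines the ``color'' of the contracted path, so odd parity forces the final step to be of the form $[B_{ij}, C_{ij}] = 2 D_{ij}$, producing a diagonal. For instance, a triangle with edges $B_{12}, B_{23}, C_{13}$ gives $[[B_{12}, B_{23}], C_{13}] = [B_{13}, C_{13}] = 2 D_{13}$. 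Once any $D_{ij} \in \mathfrak{L}$, the identities $[B_{ij}, D_{jk}] = C_{ij}$ and $[B_{ij}, C_{ij}] = 2 D_{ij}$ combined with the full $\so(n)$ propagate to give all $C_{ij}$'s and $D_{ij}$'s, and thus $\mathfrak{L} = \su(n)$.

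The principal obstacle is the drift extraction step: turning the single rigid $\mathsf{B}_0$ into individual basis generators. This requires careful combinatorial-algebraic bookkeeping---using the no-multi-edges hypothesis to keep colors cleanly separated at each drift pair, and using genericity of the $l_s$ to ensure iterated brackets produce linearly independent projections from which each drift basis element can be uniquely recovered. A secondary subtlety is the mixed-color within-component case, where the local controlled subalgebra may fall short of $\so(|V_c|)$; one must then verify that its combination with the extracted drift elements is still rich enough to drive the propagation step to the full $\so(n)$.
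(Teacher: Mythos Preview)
Your outline identifies the right ingredients but leaves the central mechanism unsupplied, and the organization you propose makes that mechanism harder to build than necessary. Two concrete points.

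\textbf{The drift extraction step is the whole proof, and you have not provided it.} You acknowledge this as ``the principal obstacle,'' but the appeal to genericity of the $l_s$ does not by itself let you peel off individual basis elements from $\mathsf{B}_0$: iterated brackets of $\mathsf{B}_0$ with elements of $\mathfrak{L}_\beta$ produce \emph{linear combinations} supported on many edges, and you need a systematic device to collapse such a combination down to a single basis element. The paper does this with a specific graph operation (the \emph{circumjacent closure} $\mathcal{H}_{ij}$) and a reduction lemma (Lemma~\ref{lem4}) showing that any bi-graph on parts of size $\geq 3$ can be driven, by repeated circumjacent closures with pairs from the same part, to a graph with a single edge. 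Algebraically this corresponds to a sequence of brackets $[F_{v_z u_z},[\ldots,[F_{v_1u_1},\widehat A]\ldots]]$ with each $F_{v_su_s}$ in the controlled subalgebra, terminating in a scalar multiple of a single $F_{v^*u^*}$ crossing two components. This is exactly the ``careful combinatorial--algebraic bookkeeping'' you allude to, and without it the argument does not close. The hypothesis $|V_c|\geq 3$ is used precisely here (the bi-graph reduction fails for parts of size $2$), not merely to enrich the local controlled subalgebra.

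\textbf{The target ``place every $B_{ij}$ into $\mathfrak{L}$'' is stronger than needed and not directly attainable.} Brackets among $B$'s and $C$'s respect a parity: $[B,B]$ and $[C,C]$ yield $B$-type, while $[B,C]$ yields $C$-type. So from a cross-component drift element and the controlled subalgebra you will in general obtain $C_{ij}$'s, not $B_{ij}$'s, for many pairs---you cannot reach $\so(n)$ before you have a diagonal, and you cannot get the diagonal from the odd-Red cycle until its individual edges lie in $\mathfrak{L}$. The paper avoids this chicken-and-egg problem by \emph{not} aiming for $\so(n)$ first. Instead it isolates one cross-component $F_{v^*u^*}$ (either color), observes that $\mathscr{G}_{\{F_{v^*u^*}\}\cup\mathrm S}$ has one fewer component, and inducts on the number of components; the base case is the connected controlled graph, already handled by Lemma~\ref{lem6} (which in turn invokes Proposition~\ref{su.1}). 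Condition~(iii) is used only at that base step, not during the merging. Reorganizing your argument along these lines---isolate one cross edge, merge, repeat---both removes the mixed-color subtlety you flag and makes the drift-extraction step tractable via the circumjacent-closure lemma.
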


\noindent{\bf Example 5.} Consider the system \eqref{bilinear} evolving on ${\rm SU}(5)$. Let $\su(5)_\alpha=\{A_{1},A_{2},A_{3}\}$ with $A_1=C_{12}-2B_{25}$, $A_2=2B_{12}+D_{15}+3C_{45}$, and $A_3=B_{34}-C_{34}$. Let $\su(5)_\beta=\{B_{12},C_{13},C_{34},B_{14},B_{15},C_{15},D_{24}\}$. The drift graph associated with $\Sigma_{\rm r}(\su(5)_{\alpha})$ and the controlled graph associated with $\Sigma_{\rm f}(\su(5)_{\beta})$ are shown, respectively, in Figure \ref{fig5}.
\begin{figure}[H]
\centering
\begin{minipage}[c]{0.33\textwidth}
\centering
\includegraphics[height=4cm,width=5cm]{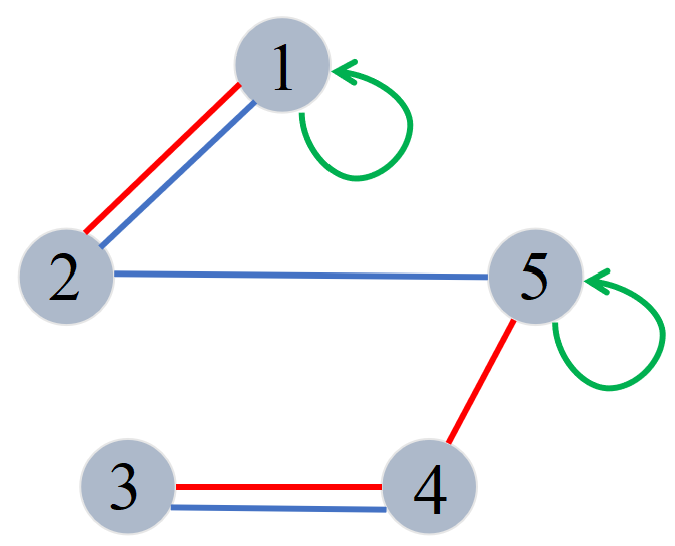}
\end{minipage}
\begin{minipage}[c]{0.33\textwidth}
\centering
\includegraphics[height=4cm,width=4.5cm]{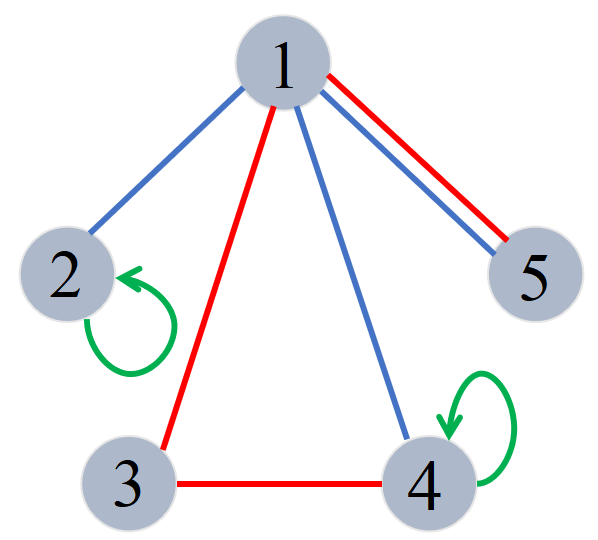}
\end{minipage}
\caption{The  edge-colored multigraph $\mathscr{G}_{\rm drift}^{{}^\alpha}$ (left) and the edge-colored
multigraph $\mathscr{G}_{\rm contr}^{{}^\beta}$ (right).}\label{fig5}
\end{figure}

(i) It is evident that $\mathscr{G}_{\rm contr}^{{}^\beta}$ is connected. Note that $\mathscr{G}_{\rm contr}^{{}^\beta}$ has two self-loops. An easy computation shows that $\{B_{12},C_{13},B_{14},B_{15},D_{24}\}_{\rm LA}=\su(5)$. On the other hand, $\mathscr{G}_{\rm contr}^{{}^\beta}$ has a cycle containing an odd number of Red edges. This gives $\{B_{12},C_{13},B_{14},B_{15},C_{15}\}_{\rm LA}=\su(5)$. Both of these two situations will lead to structural controllability of the system \eqref{bilinear} on the Lie group ${\rm SU}(5)$ with respect to the pair of zero patterns $\Sigma_{\rm zero}=\big(\Sigma_{\rm r}(\su(5)_{\alpha}),\Sigma_{\rm f}(\su(5)_{\beta})\big)$.

(ii) Let $\su(5)_\beta=\{B_{12},C_{13},C_{34},B_{14},B_{15}\}$. Then $\mathscr{G}_{\rm contr}^{{}^\beta}$ is still connected, but it has no self-loops, and no cycles with an odd number of Red edges. We have $\{B_{12},C_{13},C_{34},B_{14},B_{15}\}_{\rm LA}\neq\su(5)$, which is easy to check.  However, $\mathscr{G}_{\rm drift}^{{}^\alpha}$ has two self-loops. Choose $A=A_1+A_2+A_3\in\Sigma_{\rm r}(\su(5)_{\alpha})$. A trivial verification shows that $A+2[B_{15},B_{12}]-2B_{12}+C_{34}=C_{12}+D_{15}+3C_{45}+B_{34}$.
Write $\widetilde{A}=C_{12}+D_{15}+3C_{45}+B_{34}$, and we have $[[\widetilde{A},C_{34}],B_{14}]=2C_{14}$. This implies that $C_{14}\in\{A,B_{12},C_{13},C_{34},B_{14},B_{15}\}_{\rm LA}$. By direct computation one can verify $\big\{\{C_{14}\}\mcup\{B_{12},C_{13},C_{34},B_{14},B_{15}\}\big\}_{\rm LA}=\su(5)$. We thus get $\{A,B_{12},C_{13},C_{34},B_{14},B_{15}\}_{\rm LA}=\so(5)$, and, in consequence, the system \eqref{bilinear} is structurally controllable on the Lie group ${\rm SU}(5)$ with respect to the pair of zero patterns $\Sigma_{\rm zero}=\big(\Sigma_{\rm r}(\su(5)_{\alpha}),\Sigma_{\rm f}(\su(5)_{\beta})\big)$.

If we replace $A_2=2B_{12}+D_{15}+3B_{45}$ with $A_2=2B_{12}+3B_{45}$, then $\mathscr{G}_{\rm drift}^{{}^\alpha}$ has no self-loops. However, $\mathscr{G}_{\rm drift}^{{}^\alpha}\mcup\mathscr{G}_{\rm contr}^{{}^\beta}$ has at least one cycle with an odd number of Red edges. For $A=A_1+A_2+A_3\in\Sigma_{\rm r}(\su(5)_{\alpha})$, we have $A+2[B_{15},B_{12}]-2B_{12}+C_{34}=C_{12}+3C_{45}+B_{34}$ and $[B_{12},C_{12}+3C_{45}+B_{34}]=2D_{12}$. Now $D_{12}\in\{A,B_{12},C_{13},C_{34},B_{14},B_{15}\}_{\rm LA}$, and so $\{A,B_{12},C_{13},C_{34},B_{14},B_{15}\}_{\rm LA}=\su(5)$. Therefore, the system \eqref{bilinear} is also structurally controllable on the Lie group ${\rm SU}(5)$ with respect to the pair of zero patterns $\Sigma_{\rm zero}=\big(\Sigma_{\rm r}(\su(5)_{\alpha}),\Sigma_{\rm f}(\su(5)_{\beta})\big)$.
This example provides a validation of Theorem \ref{thm4}. \hfill$\square$

\medskip

\noindent{\bf Example 6.} Consider the system \eqref{bilinear} evolving on ${\rm SU}(6)$. Let $\su(6)_\alpha=\{A_{1}, A_{2}, A_{3}\}$ with $A_1=C_{14}+2B_{45}$, $A_2=3B_{15}-2C_{25}+D_{25}$, and $A_3=B_{56}-C_{36}$. Let $\su(6)_\beta=\{B_{12},B_{13},C_{13},C_{23},B_{46},C_{56},B_{56},D_{45}\}$. The drift graph associated with $\Sigma_{\rm r}(\su(6)_{\alpha})$ and the controlled graph associated with $\Sigma_{\rm f}(\su(6)_{\beta})$ are shown, respectively, in Figure \ref{fig6}.
\begin{figure}[H]
\centering
\begin{minipage}[c]{0.33\textwidth}
\centering
\includegraphics[height=4cm,width=5.2cm]{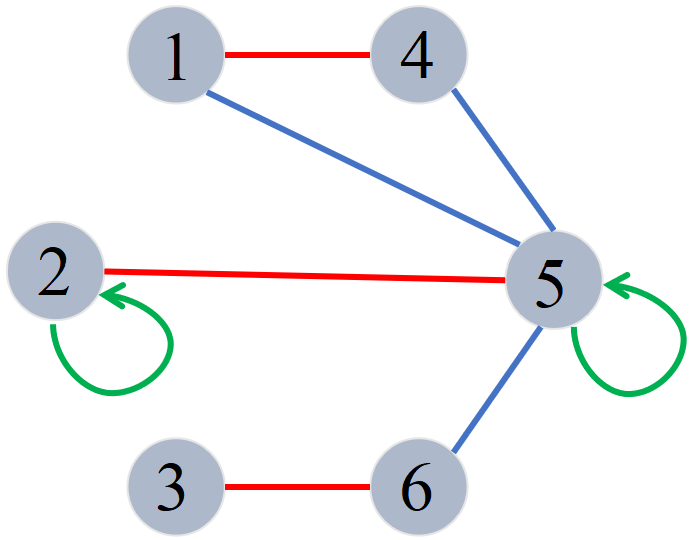}
\end{minipage}
\begin{minipage}[c]{0.33\textwidth}
\centering
\includegraphics[height=4cm,width=5.2cm]{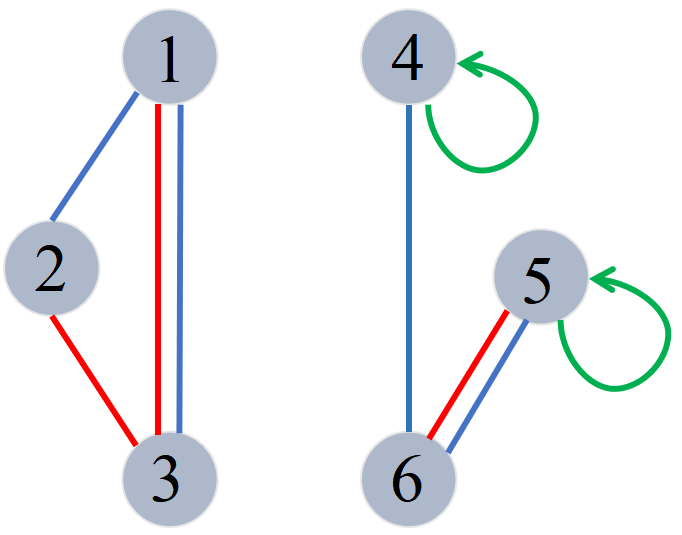}
\end{minipage}
\caption{The edge-colored multigraph $\mathscr{G}_{\rm drift}^{{}^\alpha}$ (left) and the edge-colored multigraph $\mathscr{G}_{\rm contr}^{{}^\beta}$ (right).}\label{fig6}
\end{figure}
Obviously, $\mathscr{G}_{\rm drift}^{{}^\alpha}$ has no multiple edges. Each connected component of $\mathscr{G}_{\rm contr}^{{}^\beta}$ contains at least three nodes and the union graph $\mathscr{G}_{\rm drift}^{{}^\alpha}\mcup \mathscr{G}_{\rm contr}^{{}^\beta}$ is connected. First, note that $\mathscr{G}_{\rm contr}^{{}^\beta}$ has two self-loops.  For any $A\in\Sigma_{\rm r}(\su(6)_{\alpha})$, we have $A=l_1A_{1}+l_2A_{2}+l_3A_{3}$ with $l_s\neq0\in\mathbb{R}$ for $s=1,2,3$. An easy computation shows that
\begin{equation}\label{ex.1}
[[A,B_{13}],B_{12}]=l_3C_{26}.
\end{equation}
This gives $C_{26}\in\{A,B_{12},B_{13},B_{46},C_{56},D_{45}\}_{\rm LA}$, and we thus get $\{A,B_{12},B_{13},B_{46},C_{56},D_{45}\}_{\rm LA}=\su(6)$. Hence, the system \eqref{bilinear} is structurally controllable on the Lie group ${\rm SU}(6)$ with respect to the pair of zero patterns $\Sigma_{\rm zero}=\big(\Sigma_{\rm r}(\su(6)_{\alpha}),\Sigma_{\rm f}(\su(6)_{\beta})\big)$.

Similarly it can be observed that $\mathscr{G}_{\rm contr}^{{}^\beta}$ has a cycle with an odd number of Red edges. By \eqref{ex.1} it is immediate that $C_{26}\in\{A,B_{12},B_{13},C_{23},B_{46},C_{56}\}_{\rm LA}$. This clearly forces $\{A,B_{12},B_{13},C_{23},B_{46},C_{56}\}_{\rm LA}=\su(6)$. Thus, one can deduce the structural controllability of system \eqref{bilinear}. This example provides a validation of Theorem \ref{thm5}. \hfill$\square$

\section{Conclusions}\label{sec:conc}

We have established  graphical conditions for structural controllability and accessibility  of drifted bilinear systems over Lie groups.  Bilinear control systems with   drift  and  controlled terms that evolve over Lie groups were considered. Zero patterns   prescribed   the possible configuration of the drift and controlled dynamics with respect to a set of base elements in the corresponding Lie algebra. The drift dynamics was assumed to be fixed, taking values as a linear combination of the base elements with strictly non-zero coefficients; the controlled dynamics was assumed to be free with potentially zero coefficients in the configuration. For bilinear systems over  the special orthogonal group or the special unitary group, the zero patterns were proven  to be associated with two undirected or directed graphs whose connectivity and connected components  ensure structural controllability/accessibility. For bilinear systems over the special unitary group,   two edge-colored graphs associated with the drift and controlled zero patterns were proven to be critical for investigating structural controllability. Future work might include  necessary and sufficient  graphical conditions for structural controllability and accessibility, and a computational complexity analysis of the established connectivity conditions. 

\section*{Appendix}
\subsection*{A. Proof of Theorem \ref{thm1}}
We first recall a few auxiliary lemmas that are used in the proof. Consider  $\mathfrak{G}$ as the set of all undirected graphs over the node set $\mathrm{V}$, and $\mathfrak{B}$ as the class of all subsets of $\mathpzc{B}$. By identifying each edge $\{i,j\}$ with the matrix $B_{ij}$, we establish a natural $1-1$ correspondence, between each element in $\mathfrak{G}$ and each element in $\mathfrak{B}$. We denote such a mapping $\ell$ which maps from $\mathfrak{B}$ to $\mathfrak{G}$.

\begin{lemma}\label{so.1}
Let $\mathcal {S}$ be a subset of $\mathpzc{B}$ and $\ell(\mathcal {S})$ be the graph associated
with $\mathcal {S}$. The Lie algebra generated by $\mathcal {S}$ is equal to $\so(n)$ if and and if $\ell(\mathcal {S})$ is connected.
\end{lemma}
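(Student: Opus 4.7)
The plan is to prove both directions by exploiting the structure constants of $\so(n)$ in the basis $\mathpzc{B}$, specifically the identity
\begin{equation*}
[B_{ij},B_{jk}] = \pm B_{ik} \quad \text{for distinct } i,j,k,
\end{equation*}
which follows from a direct computation using $E_{ab}E_{cd}=\delta_{bc}E_{ad}$. I would also note the complementary identity $[B_{ij},B_{kl}]=0$ whenever $\{i,j\}\cap\{k,l\}=\emptyset$.

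\medskip

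\noindent\textbf{Necessity.} I would prove the contrapositive. Suppose $\ell(\mathcal{S})$ is disconnected, and let $\mathrm{V}_1,\mathrm{V}_2\subseteq\mathrm{V}$ be disjoint nonempty sets with $\mathrm{V}_1\cup\mathrm{V}_2=\mathrm{V}$ such that no edge of $\ell(\mathcal{S})$ crosses between them. Let $\mathfrak{h}:=\so(\mathrm{V}_1)\oplus\so(\mathrm{V}_2)$, the block-diagonal Lie subalgebra of $\so(n)$ spanned by those $B_{ij}$ with both endpoints in the same $\mathrm{V}_r$. By hypothesis $\mathcal{S}\subseteq\mathfrak{h}$, and $\mathfrak{h}$ is a Lie subalgebra, so the generated Lie algebra sits inside $\mathfrak{h}\subsetneq\so(n)$.

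\medskip

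\noindent\textbf{Sufficiency.} Assume $\ell(\mathcal{S})$ is connected. The goal is to show that every basis element $B_{ik}\in\mathpzc{B}$ lies in the Lie algebra $\mathcal{L}$ generated by $\mathcal{S}$; since $\mathpzc{B}$ spans $\so(n)$, this yields $\mathcal{L}=\so(n)$. By connectedness, for any two distinct nodes $i$ and $k$ there is a path $i=v_0,v_1,\dots,v_m=k$ in $\ell(\mathcal{S})$ so that each $B_{v_{s-1}v_s}\in\mathcal{S}$. I would argue by induction on the path length $m$: the base case $m=1$ is immediate, and the inductive step uses the key bracket identity applied to $B_{v_0 v_{m-1}}\in\mathcal{L}$ (by induction) and $B_{v_{m-1}v_m}\in\mathcal{S}$, which yields $\pm B_{v_0 v_m}=\pm B_{ik}\in\mathcal{L}$. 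One must check that the three indices involved in each invocation of the identity are distinct, which holds because the path is a sequence of distinct nodes.

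\medskip

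No step in this proof is genuinely hard; the only point requiring care is bookkeeping the sign in $[B_{ij},B_{jk}]=\pm B_{ik}$ (the sign depends on the ordering of $i,k$) and confirming that the index distinctness needed for the bracket identity is guaranteed by the walk being a path of distinct nodes. If one preferred, the sufficiency argument could equivalently be phrased as: the set $\{(i,k): B_{ik}\in\mathcal{L}\}$ defines an equivalence relation on $\mathrm{V}$ whose equivalence classes are unions of connected components of $\ell(\mathcal{S})$, hence connectedness forces a single class and $\mathcal{L}=\so(n)$.
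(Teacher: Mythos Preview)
Your proof is correct. However, the paper does not actually prove this lemma: it is introduced in the appendix under ``We first recall a few auxiliary lemmas that are used in the proof,'' and is treated as a known result (presumably from the prior work \cite{structural,li2017,wang2020,arxiv} on which the paper builds). So there is no proof in the paper to compare your argument against.

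For what it is worth, your argument is the standard one, and the bracket identity you use appears later in the paper as a special case of Lemma~\ref{lem3}. The only minor quibble is in your final alternative phrasing: the relation ``$B_{ik}\in\mathcal{L}$'' on pairs $(i,k)$ is not literally an equivalence relation as stated (reflexivity would require $B_{ii}=0\in\mathcal{L}$, which is fine, but you should say so), though the intended meaning is clear and the main path-induction argument stands on its own.
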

For any $A\in\so(n)$, we can represent it uniquely in the form 
$$
A=\sum_{k=1}^{l} a_{k} B_{{i}^\ast_k{j}^\ast_k},
$$
where $a_k\neq 0\in\mathbb{R}$, $i^\ast_k,j^\ast_k\in\mathrm{V}$, and $B_{{i}^\ast_k{j}^\ast_k}\in\mathpzc{B}$. Define $\mathrm{G}_A=(\mathrm{V},\mathrm{E}_A)$ with $\mathrm{E}_{A}=\big\{\{i^\ast_1,j^\ast_1\},\dots,\{i^\ast_l,j^\ast_l\}\big\}$. The following lemma holds \cite{wang2020}.

\begin{lemma}\label{so.2}
Consider a subset $\mathcal {S}\subseteq\mathpzc{B}$ with the associated graph $\mathrm{G}_{\mathcal {S}}:=\ell(\mathcal {S})$ and a matrix $A\in\so(n)$ with the associated graph $\mathrm{G}_A$. The following statements hold.

$(i)$ The Lie algebra generated by $\{A\}\mcup\mathcal {S}$ is equal to $\so(n)$ if each connected component of $\mathrm{G}_{\mathcal {S}}$ contains at least three nodes and the union graph $\mathrm{G}_A \mcup \mathrm{G}_{\mathcal {S}}$ is connected.

$(ii)$ If the Lie algebra generated by $\{A\}\mcup\mathcal {S}$ is equal to $\so(n)$, then the union graph $\mathrm{G}_A \mcup \mathrm{G}_{\mathcal {S}}$ is connected.
\end{lemma}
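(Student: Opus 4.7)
The plan separates the two implications. For (ii), I argue by contrapositive: if $\mathrm{G}_A \mcup \mathrm{G}_{\mathcal{S}}$ is disconnected, partition $\mathrm{V} = \mathrm{V}_1 \cup \mathrm{V}_2$ so that no edge crosses. Then every $B_{ij} \in \mathcal{S}$, as well as every basis element $B_{i^\ast_k j^\ast_k}$ appearing in the expansion of $A$, has both indices inside a single $\mathrm{V}_r$, so $\{A\} \cup \mathcal{S} \subseteq \so(\mathrm{V}_1) \oplus \so(\mathrm{V}_2)$. The latter is a block-diagonal Lie subalgebra properly contained in $\so(n)$, hence the generated Lie algebra cannot be all of $\so(n)$.

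For (i), let $L$ denote the Lie algebra generated by $\{A\} \cup \mathcal{S}$, and let $\mathrm{V}_1, \ldots, \mathrm{V}_p$ be the components of $\mathrm{G}_{\mathcal{S}}$. Applying Lemma \ref{so.1} inside each component yields $\so(\mathrm{V}_c) \subseteq L$ for every $c$. Writing $A = A_{\mathrm{int}} + A_{\mathrm{cross}}$ according to whether each edge of $\mathrm{G}_A$ is internal or bridges two components, the internal part lies in $\bigoplus_c \so(\mathrm{V}_c) \subseteq L$, so subtracting gives $A_{\mathrm{cross}} \in L$. I further decompose $A_{\mathrm{cross}} = \sum_{c<c'} A'_{c,c'}$ into off-diagonal blocks $\so(\mathrm{V}_c, \mathrm{V}_{c'}) := \mathrm{span}\{B_{xy}: x \in \mathrm{V}_c,\, y \in \mathrm{V}_{c'}\}$.

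The crucial step is to recover, for each pair $(c_0, c_0')$ with $A'_{c_0, c_0'} \neq 0$, a single edge $B_{xy} \in L$ with $x \in \mathrm{V}_{c_0}, y \in \mathrm{V}_{c_0'}$. I observe that $L \cap \so(\mathrm{V}_{c_0}, \mathrm{V}_{c_0'})$ is invariant under $\mathrm{ad}_\xi$ and $\mathrm{ad}_\eta$ for $\xi \in \so(\mathrm{V}_{c_0})$ and $\eta \in \so(\mathrm{V}_{c_0'})$, since both subalgebras already lie in $L$ and their adjoint actions preserve the off-diagonal block. A disjoint-supports calculation gives $[\eta, [\xi, A_{\mathrm{cross}}]] = [\eta, [\xi, A'_{c_0, c_0'}]]$, which is nonzero for generic $\xi, \eta$; thus $L \cap \so(\mathrm{V}_{c_0}, \mathrm{V}_{c_0'})$ is a nonzero invariant subspace. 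Identifying the off-diagonal block with $\mathbb{R}^{\mathrm{V}_{c_0}} \otimes \mathbb{R}^{\mathrm{V}_{c_0'}}$ and the $\so(\mathrm{V}_{c_0}) \oplus \so(\mathrm{V}_{c_0'})$-action with the outer tensor of standard representations, irreducibility over $\mathbb{R}$ forces $L \cap \so(\mathrm{V}_{c_0}, \mathrm{V}_{c_0'}) = \so(\mathrm{V}_{c_0}, \mathrm{V}_{c_0'})$, and in particular $L$ contains all desired $B_{xy}$. Finally, collecting one such cross edge per $\mathrm{G}_A$-adjacent pair and taking the union with $\mathcal{S}$ yields a subset of $\mathpzc{B}$ whose associated graph is connected by the hypothesis on $\mathrm{G}_A \mcup \mathrm{G}_{\mathcal{S}}$; one more application of Lemma \ref{so.1} gives $\so(n) \subseteq L$. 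The main obstacle is the real-irreducibility input: the three-node condition is exactly what ensures the standard $\so(\mathrm{V}_c)$-representation is absolutely irreducible of real type, so that the tensor representation is $\mathbb{R}$-irreducible; the argument would fail for $|\mathrm{V}_c| = 2$ because $\so(2)$'s standard representation is of complex type and the corresponding tensor would split.
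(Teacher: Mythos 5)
Your proof is correct. Note that the paper does not actually prove Lemma \ref{so.2}; it imports it from \cite{wang2020} with the proof omitted, so the relevant comparison is with the technique the authors use for the analogous statements over $\gl(n)$ and $\su(n)$ (Lemmas \ref{gl.2} and \ref{su.3}). There, a single cross basis element is extracted by an explicit combinatorial scheme of iterated ``circumjacent closures'' (Lemmas \ref{lem2}, \ref{lem4}, \ref{lem5}) that whittles the graph of $[\widetilde{A},E_{ij}]$ down to one arc. You take a genuinely different, representation-theoretic route: after securing $\so(\mathrm{V}_c)\subseteq L$ for each component via Lemma \ref{so.1} and extracting $A_{\rm cross}\in L$, you note that $L\cap\so(\mathrm{V}_{c},\mathrm{V}_{c'})$ is an invariant subspace of the off-diagonal block for the $\so(\mathrm{V}_{c})\oplus\so(\mathrm{V}_{c'})$-action, show it is nonzero via the double bracket $[\eta,[\xi,A_{\rm cross}]]$ (which correctly annihilates every block except the $(c,c')$ one and is nonzero for suitable $\xi,\eta$ since a nonzero $M$ admits skew $\xi_0,\eta_0$ with $\xi_0 M\eta_0\neq0$), and conclude by $\mathbb{R}$-irreducibility of the outer tensor product of two standard representations, valid because both are absolutely irreducible when each component has at least three nodes. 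This buys the entire block $\so(\mathrm{V}_c,\mathrm{V}_{c'})\subseteq L$ in one stroke and a shorter argument; the paper's closure machinery is more elementary and transfers to the directed and edge-colored settings where the corresponding blocks are not irreducible modules in such a clean way. Part (ii) via the containment $\{A\}\mcup\mathcal{S}\subseteq\so(\mathrm{V}_1)\oplus\so(\mathrm{V}_2)$ is the standard argument and is fine. One small quibble that does not affect the proof: for a single two-node component the tensor $\mathbb{R}^2\otimes\mathbb{R}^{m'}$ with $m'\geq3$ is in fact still $\mathbb{R}$-irreducible (its complexification splits into two summands that are swapped, not fixed, by conjugation); genuine real reducibility of the block only occurs when both components have two nodes, so your closing diagnosis of where the three-node hypothesis is indispensable is only partly accurate.
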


We are now in a position to show the detailed proof of Theorem \ref{thm1}. Since the union graph $\mathrm{G}_{\rm drift}^{{}^\alpha} \mcup \mathrm{G}_{\rm contr}^{{}^\beta}$ is connected, there must exist a matrix $A\in\Sigma_{\rm r}(\so(n)_{\alpha})$ such that $\mathrm{G}_A \mcup \mathrm{G}_{\rm contr}^{{}^\beta}$ is connected. Moreover, each connected component of $\mathrm{G}_{\rm contr}^{{}^\beta}$ contains at least three nodes. Lemma \ref{so.2}. (i) now leads to $\big\{\{A\}\mcup \so(n)_\beta\big\}_{\rm LA}=\so(n)$. Therefore, there exist $A\in\Sigma_{\rm r}(\so(n)_{\alpha})$ and $\so(n)_\beta\subset \Sigma_{\rm f}(\su(n)_{\beta})$, such that system \eqref{bilinear} is controllable on the Lie group ${\rm SO}(n)$. This is to say, the system \eqref{bilinear} is structurally controllable on the Lie group ${\rm SO}(n)$ with respect to the pair of zero patterns $\Sigma_{\rm zero}=\big(\Sigma_{\rm r}(\so(n)_{\alpha}),\Sigma_{\rm f}(\so(n)_{\beta})\big)$. This is the desired conclusion.

\subsection*{B. Proof of Theorem \ref{thm2.1}}
Before presenting the detailed proof for Theorem \ref{thm2.1}, we recall some auxiliary lemmas and definitions which will be useful for the analysis.
\begin{lemma}\label{lem1}
The Lie bracket of $E_{ij}$ and $E_{kl}$ in $\mathpzc{E}$ satisfies the relation $[E_{ij},E_{kl}] = \delta_{jk}E_{il}-\delta_{li}E_{kj}$, where $\delta$ is the Kronecker delta function, i.e., $\delta_{mn}=1$ if $m=n$ and $\delta_{mn}=0$ otherwise.
\end{lemma}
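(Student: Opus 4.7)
The plan is to prove this identity by a direct matrix-algebraic computation. Recall that the matrix unit $E_{pq}\in\R^{n\times n}$ is defined by $(E_{pq})_{mn}=\delta_{mp}\delta_{nq}$, i.e., a single $1$ in the $(p,q)$ position and zeros elsewhere. Since the Lie bracket on $\gl(n)$ is the commutator $[X,Y]=XY-YX$, it suffices to evaluate the two products $E_{ij}E_{kl}$ and $E_{kl}E_{ij}$ in closed form and subtract.

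For the first product, I would compute entrywise:
$$
(E_{ij}E_{kl})_{mn}=\sum_{p=1}^{n}(E_{ij})_{mp}(E_{kl})_{pn}=\sum_{p=1}^{n}\delta_{mi}\delta_{pj}\delta_{pk}\delta_{nl}.
$$
The sum collapses because the factors $\delta_{pj}\delta_{pk}$ are nonzero only when $p=j=k$, which forces the whole expression to equal $\delta_{jk}\delta_{mi}\delta_{nl}$. Reading this off, $E_{ij}E_{kl}=\delta_{jk}E_{il}$. By exactly the same argument with the roles of the two factors swapped, $E_{kl}E_{ij}=\delta_{li}E_{kj}$. Subtracting yields $[E_{ij},E_{kl}]=\delta_{jk}E_{il}-\delta_{li}E_{kj}$, which is the claim.

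There is no conceptual obstacle here: the argument is a one-line verification once one remembers the row-column contraction rule for the matrix units. The reason to isolate the lemma is strategic rather than difficult, since it will serve as the fundamental combinatorial rule governing how adjacent directed edges in $\mathcal{G}_{\rm contr}^{{}^\beta}$ compose under Lie brackets. Specifically, when $j=k$ and $l\neq i$ it produces a single new matrix unit $E_{il}$, corresponding to concatenating the arcs $(i,j)$ and $(j,l)$; when in addition $l=i$ it produces the diagonal difference $E_{ii}-E_{jj}$. Both of these will be exactly the patterns exploited in the graph-theoretic bracket calculations underpinning the proof of Theorem \ref{thm2.1}.
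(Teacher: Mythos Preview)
Your proof is correct; the entrywise computation showing $E_{ij}E_{kl}=\delta_{jk}E_{il}$ and hence $[E_{ij},E_{kl}]=\delta_{jk}E_{il}-\delta_{li}E_{kj}$ is the standard verification of this well-known identity for matrix units. The paper itself states this lemma without proof, treating it as an elementary preliminary, so there is nothing to compare against and your argument is exactly what one would supply if asked to fill in the details.
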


\begin{definition}\label{def7}
Let $\mathcal{G}$ be a simple digraph. The simple digraph transitive closure mapping, $\mathcal{M}(\cdot)$, is defined as
$$
\mathcal{M}(\mathcal{G}) = \Big(\mathrm{V}, \mathcal{E}\mcup\big\{(i,k)\,:\,\exists j \ \text{s.t. }\ (i,j)\in \mathcal{E},~(j,k)\in \mathcal{E},\ i\neq k \big\}\Big).
$$
We then recursively define $\mathcal{M}^{k}(\mathcal{G}):=\mathcal{M}(\mathcal{M}^{k-1}(\mathcal{G}))$ with $\mathcal{M}^{0}(\mathcal{G}):=\mathcal{G}$.
\end{definition}

\begin{lemma}\label{map.2}
Let $\mathcal{G}$ be a simple digraph. There exists an integer $z$ such that the digraph $\mathcal{M}^{z}(\mathcal{G})$ is simple complete if and only if $\mathcal{G}$ is strongly connected.
\end{lemma}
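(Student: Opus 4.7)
The plan is to translate the statement into one about directed walks in $\mathcal{G}$ and then proceed by induction in both directions, relating arcs in $\mathcal{M}^{k}(\mathcal{G})$ to directed walks in $\mathcal{G}$ of length at most $2^{k}$.

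For the \emph{only if} direction, I would show by induction on $k\geq 0$ that every arc $(i,j)\in \mathcal{M}^{k}(\mathcal{G})$ is witnessed by some directed walk from $i$ to $j$ in the original digraph $\mathcal{G}$. The base case $k=0$ is immediate, and the inductive step follows by concatenating the walks that witness the two arcs $(i,j),(j,k)\in\mathcal{M}^{k}(\mathcal{G})$ used in the definition to create a new arc $(i,k)$ in $\mathcal{M}^{k+1}(\mathcal{G})$. Consequently, if $\mathcal{M}^{z}(\mathcal{G})$ is simple complete for some $z$, every ordered pair of distinct nodes of $\mathrm{V}$ is joined by a directed walk in $\mathcal{G}$, which is precisely strong connectedness.

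For the \emph{if} direction, assume $\mathcal{G}$ is strongly connected. I would prove, again by induction on $k$, that whenever two distinct nodes $i,j$ are joined in $\mathcal{G}$ by a directed walk of length at most $2^{k}$, the arc $(i,j)$ belongs to $\mathcal{M}^{k}(\mathcal{G})$. For the step, since $\mathcal{G}$ is strongly connected, one may restrict attention to simple directed paths; given a simple path $v_{0},v_{1},\ldots,v_{\ell}$ of length $\ell$ with $2^{k}<\ell\leq 2^{k+1}$, I would split it at the midpoint $v_{\lfloor \ell/2\rfloor}$ into two sub-paths of length at most $2^{k}$. Simplicity guarantees the midpoint is distinct from $v_{0}$ and $v_{\ell}$, so the inductive hypothesis gives two arcs $(v_{0},v_{\lfloor \ell/2\rfloor}),(v_{\lfloor \ell/2\rfloor},v_{\ell})\in \mathcal{M}^{k}(\mathcal{G})$, and the defining rule of $\mathcal{M}$ places $(v_{0},v_{\ell})\in \mathcal{M}^{k+1}(\mathcal{G})$. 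Because any two distinct nodes in a strongly connected digraph on $n$ nodes are joined by a simple directed path of length at most $n-1$, choosing any integer $z\geq \log_{2}(n-1)$ forces $\mathcal{M}^{z}(\mathcal{G})$ to contain every arc $(i,j)$ with $i\neq j$, making it simple complete.

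The main obstacle I anticipate is the bookkeeping in the inductive split: one must verify that the chosen midpoint is distinct from both endpoints so that the inductive hypothesis (which requires $i\neq j$) actually applies and no self-loop is accidentally invoked when appealing to the rule defining $\mathcal{M}$. Both issues are resolved by passing to a simple directed path rather than a walk and by the case split $\ell \geq 2$ in the inductive step, which ensures both halves have positive length. Once this is handled the two inductions close cleanly and give the claimed equivalence with the explicit bound $z = \lceil \log_{2}(n-1)\rceil$.
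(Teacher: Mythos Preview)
Your argument is correct. The induction relating arcs of $\mathcal{M}^{k}(\mathcal{G})$ to directed walks in $\mathcal{G}$ of length at most $2^{k}$ is the natural approach, and your handling of the midpoint split (passing to simple paths so that the intermediate vertex is distinct from both endpoints and the two halves have positive length) closes the only gap that could arise from the ``$i\neq k$'' clause in the definition of $\mathcal{M}$.

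As for comparison: the paper does not actually prove this lemma. It is stated in Appendix~B alongside a group of auxiliary lemmas that the authors attribute to their companion preprint and whose proofs they explicitly omit. So there is no ``paper's own proof'' to compare against here; the authors treat the result as known. Your write-up supplies precisely the missing detail, and the explicit bound $z=\lceil\log_{2}(n-1)\rceil$ you obtain is a nice quantitative addition that the paper does not mention.
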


Consider $\mathfrak{D}$ as the set of all digraphs over the node set $\mathrm{V}$, and $\mathfrak{E}$ as the set of all subsets of $\mathpzc{E}$. By identifying each arc $(i,j)$ with the matrix $E_{ij}\in\mathpzc{E}$, we establish a natural $1-1$ correspondence between each element in $\mathfrak{D}$ and each element in $\mathfrak{E}$. We denote such a mapping $\tau$ which maps from $\mathfrak{E}$ to $\mathfrak{D}$. Denote $\mathpzc{E}_{1}=\{E_{ij}:1\le i\neq j\le n\}$, $\mathpzc{E}_{2}=\{E_{ii}:1\le i\le n\}$. Then with $\mathpzc{E}=\mathpzc{E}_{1}\mcup\mathpzc{E}_{2}$, for any $\mathcal {S}\subseteq \mathpzc{E}_{1}$, the digraph $\tau(\mathcal {S})$ is simple. The following lemmas, Lemma \ref{gl.0}, \ref{gl.1}, \ref{lem2} appeared in \cite{arxiv}. We therefore collect them below and the details of their proofs are omitted.

\begin{lemma}\label{gl.0}
Given a subset $\mathcal {S}\subseteq \mathpzc{E}_{1}$, let $\mathcal {S}^0\subseteq \mathcal {S}^1 \subseteq \mathcal {S}^2\cdots$ be an ascending chain of subsets of $\mathpzc{E}_{1}$ such that $\mathcal {S}^0=\mathcal {S}, \mathcal {S}^1 = [\mathcal {S}^0,\mathcal {S}^0]\mcup \mathcal {S}^0, \dots, \mathcal {S}^{k+1} = [\mathcal {S}^k, \mathcal {S}^k]\mcup \mathcal {S}^k,\dots.$
where $[\mathcal {S}^k, \mathcal {S}^k] = \big\{[E_{ij}, E_{jk}]=E_{ik}: E_{ij},E_{jk} \in \mathcal {S}^k ~{\rm and}~ i\neq k\big\}$. Then $\mathcal{M}^{k}(\tau(\mathcal {S}))=\tau(\mathcal {S}^k)$ holds for all $k= 0, 1,\dots$.
\end{lemma}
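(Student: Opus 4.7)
The plan is to proceed by induction on $k$, leveraging the precise bijective correspondence $\tau:\mathfrak{E}\to\mathfrak{D}$ between subsets of $\mathpzc{E}_1$ and simple digraphs on $\mathrm{V}$, together with the fundamental commutator identity from Lemma \ref{lem1}. The base case $k=0$ is immediate: $\mathcal{M}^{0}(\tau(\mathcal{S}))=\tau(\mathcal{S})=\tau(\mathcal{S}^0)$ holds by the definitions of $\mathcal{M}^{0}$ and $\mathcal{S}^{0}$.

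For the inductive step, assume $\mathcal{M}^{k}(\tau(\mathcal{S}))=\tau(\mathcal{S}^{k})$. Then by the recursive definition of $\mathcal{M}^{k+1}$ and the inductive hypothesis,
$$
\mathcal{M}^{k+1}(\tau(\mathcal{S}))=\mathcal{M}\bigl(\mathcal{M}^{k}(\tau(\mathcal{S}))\bigr)=\mathcal{M}(\tau(\mathcal{S}^{k})),
$$
so it suffices to establish the one-step identity $\mathcal{M}(\tau(\mathcal{S}^{k}))=\tau(\mathcal{S}^{k+1})$.

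To show this one-step identity, I will compare the arc sets of the two digraphs under the correspondence $\tau$. An arc $(i,k)$ belongs to $\mathcal{M}(\tau(\mathcal{S}^{k}))$ precisely when either $(i,k)$ is already in $\tau(\mathcal{S}^{k})$ (equivalently $E_{ik}\in\mathcal{S}^{k}$), or there exists some intermediate node $j$ with $(i,j),(j,k)\in\tau(\mathcal{S}^{k})$ and $i\neq k$ (equivalently $E_{ij},E_{jk}\in\mathcal{S}^{k}$ with $i\neq k$). By Lemma \ref{lem1}, the Lie bracket evaluates as $[E_{ij},E_{jk}]=\delta_{jj}E_{ik}-\delta_{ki}E_{jj}=E_{ik}$ whenever $i\neq k$, so the latter condition is exactly that $E_{ik}\in[\mathcal{S}^{k},\mathcal{S}^{k}]$. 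Combining the two cases, an arc $(i,k)$ lies in $\mathcal{M}(\tau(\mathcal{S}^{k}))$ iff $E_{ik}\in \mathcal{S}^{k}\mcup[\mathcal{S}^{k},\mathcal{S}^{k}]=\mathcal{S}^{k+1}$, which is the defining condition for $(i,k)\in\tau(\mathcal{S}^{k+1})$.

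The main bookkeeping issue, and the only place where one must be a bit careful, is to ensure that $\tau$ remains well-defined throughout the iteration, i.e., that every $\mathcal{S}^{k}$ stays inside $\mathpzc{E}_1$ so no diagonal generators $E_{ii}$ sneak in and spoil the simplicity of the digraph. This is guaranteed by the explicit restriction $i\neq k$ in the defining formula of $[\mathcal{S}^{k},\mathcal{S}^{k}]$, which is precisely what excludes the otherwise-troublesome $\delta_{ki}E_{jj}$ contribution from the bracket formula. With this observation, the induction closes and the lemma follows.
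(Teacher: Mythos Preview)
Your proof is correct. The paper does not actually give a proof of this lemma---it states that Lemmas \ref{gl.0}, \ref{gl.1}, and \ref{lem2} appeared in \cite{arxiv} and omits the details of their proofs---so there is no in-paper argument to compare against; your straightforward induction via the one-step identity $\mathcal{M}(\tau(\mathcal{S}^{k}))=\tau(\mathcal{S}^{k+1})$ is the natural approach.
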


We denote by $\mathfrak{sl}(n)$ the vector space of $n\times n$ real traceless matrices.

\begin{lemma}\label{gl.1}
Let $\mathcal {S}$ be a subset of $\mathpzc{E}$ and $\tau(\mathcal {S})$ be the digraph associated
with $\mathcal {S}$.

$(i)$ The Lie algebra generated by $\mathcal {S}$ contains $\mathfrak{sl}(n)$ if and only if $\tau(\mathcal {S})$ is strongly connected.

$(ii)$ The Lie algebra generated by $\mathcal {S}$ is equal to $\gl(n)$ if and only if $\tau(\mathcal {S})$ is a strongly connected digraph with at least one self-loop.
\end{lemma}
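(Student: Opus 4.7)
My plan is to reduce both parts of Lemma \ref{gl.1} to purely graph-theoretic statements via the already-stated Lemma \ref{lem1}, Lemma \ref{map.2}, and Lemma \ref{gl.0}, then settle part (ii) using the decomposition $\gl(n) = \mathfrak{sl}(n) \oplus \mathbb{R} I_n$ together with the tracelessness of every matrix commutator.

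For part (i), I would first split $\mathcal{S} = \mathcal{S}_1 \cup \mathcal{S}_2$ with $\mathcal{S}_1 := \mathcal{S} \cap \mathpzc{E}_1$ and $\mathcal{S}_2 := \mathcal{S} \cap \mathpzc{E}_2$, and note that self-loops do not affect strong connectivity, so $\tau(\mathcal{S})$ is strongly connected if and only if the simple digraph $\tau(\mathcal{S}_1)$ is. In the ``if'' direction, Lemma \ref{map.2} will yield an integer $z$ with $\mathcal{M}^{z}(\tau(\mathcal{S}_1))$ simple complete, and Lemma \ref{gl.0} translates this to $\mathcal{S}_1^{z} = \mathpzc{E}_1$; hence every off-diagonal $E_{ij}$ must lie in $\{\mathcal{S}\}_{\rm LA}$. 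Applying Lemma \ref{lem1} to the pair $(E_{ij}, E_{ji})$ then produces $E_{ii} - E_{jj}$ for each $i \neq j$, and together with $\mathpzc{E}_1$ these span $\mathfrak{sl}(n)$, giving $\mathfrak{sl}(n) \subseteq \{\mathcal{S}\}_{\rm LA}$.

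For the ``only if'' direction of (i), I would argue by contraposition. Suppose $\tau(\mathcal{S})$ is not strongly connected; after choosing $i_0$ and $j_0$ with $j_0$ unreachable from $i_0$, let $R$ be the set of nodes reachable from $i_0$, so that $\emptyset \neq R \subsetneq \mathrm{V}$. Every arc of $\tau(\mathcal{S})$ starting inside $R$ must terminate inside $R$, so each generator $E_{kl} \in \mathcal{S}$ lies in the linear subspace
$$
\mathcal{W} := \{ M \in \gl(n) : M_{kl} = 0 \text{ for all } k \in R,\ l \in \mathrm{V}\setminus R \}.
$$
A short check on $[M,N]_{kl} = \sum_{m}(M_{km}N_{ml} - N_{km}M_{ml})$, splitting the sum according to whether $m \in R$ or $m \notin R$, will show that $\mathcal{W}$ is closed under the commutator, hence a Lie subalgebra, so $\{\mathcal{S}\}_{\rm LA} \subseteq \mathcal{W}$. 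For any $i \in R$ and $j \in \mathrm{V}\setminus R$ the matrix $E_{ij}$ lies in $\mathfrak{sl}(n)$ but not in $\mathcal{W}$, so $\mathfrak{sl}(n) \not\subseteq \{\mathcal{S}\}_{\rm LA}$.

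For part (ii), the ``if'' direction combines part (i) with the observation that a self-loop gives some $E_{kk} \in \mathcal{S}$ with $\tr(E_{kk}) = 1$, supplying a matrix outside $\mathfrak{sl}(n)$; together with $\mathfrak{sl}(n) \subseteq \{\mathcal{S}\}_{\rm LA}$ from (i), this fills out $\gl(n)$ via the splitting $\gl(n) = \mathfrak{sl}(n) \oplus \mathbb{R} I_n$. For the ``only if'' direction, equality with $\gl(n)$ forces $\mathfrak{sl}(n) \subseteq \{\mathcal{S}\}_{\rm LA}$, so (i) gives strong connectivity; and if $\mathcal{S}$ had no self-loop, then $\mathcal{S} \subseteq \mathpzc{E}_1 \subseteq \mathfrak{sl}(n)$, and since commutators are traceless $\{\mathcal{S}\}_{\rm LA} \subseteq \mathfrak{sl}(n)$, contradicting equality with $\gl(n)$. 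The main obstacle will be the ``only if'' half of (i), namely identifying the invariant subspace $\mathcal{W}$ from the reachability partition and verifying its closure under commutators; the rest should be a direct assembly from the quoted lemmas and the trace decomposition of $\gl(n)$.
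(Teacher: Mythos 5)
Your proof is correct. Note, however, that the paper itself does not prove Lemma \ref{gl.1}: it states that Lemmas \ref{gl.0}, \ref{gl.1} and \ref{lem2} appeared in the cited reference \cite{arxiv} and explicitly omits their proofs, so there is no in-paper argument to compare against line by line. That said, your sufficiency argument for part (i) is exactly the one the paper's scaffolding is built to support: Lemma \ref{map.2} gives a $z$ with $\mathcal{M}^{z}(\tau(\mathcal{S}_1))$ simple complete, Lemma \ref{gl.0} converts this to $\mathcal{S}_1^{z}=\mathpzc{E}_1$ so that every off-diagonal $E_{ij}$ is an iterated bracket of elements of $\mathcal{S}$, and $[E_{ij},E_{ji}]=E_{ii}-E_{jj}$ from Lemma \ref{lem1} completes a spanning set of $\mathfrak{sl}(n)$. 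Your necessity argument via the reachability set $R$ and the block-triangular invariant subspace $\mathcal{W}$ is a clean self-contained addition (the entrywise check of closure under the commutator, splitting the sum over $m\in R$ versus $m\notin R$, does go through, since $\mathcal{W}$ is just the set of block lower-triangular matrices for the partition $(R,\mathrm{V}\setminus R)$ and is closed under matrix multiplication). Part (ii) is correctly reduced to part (i) plus the codimension-one splitting $\gl(n)=\mathfrak{sl}(n)\oplus\mathbb{R}I_n$ and the tracelessness of commutators. I see no gaps; the only cosmetic caveat is that Lemma \ref{gl.0} is stated for subsets of $\mathpzc{E}_1$, which you correctly handle by first discarding the self-loops $\mathcal{S}_2$ before invoking it.
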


\begin{definition}\label{def8}
Let $\mathcal{G}=(\mathrm{V},\mathcal{E})$ be a simple digraph. Given an ordered pair of nodes
$\langle i,j\rangle$ with $i, j\in\mathrm{V}$, the digraph
$\mathcal{H}_{ij}(\mathcal{G}) := (\mathrm{V},\mathcal{E}_{ij})$ is called the circumjacent closure at
$\langle i,j\rangle$ of $\mathcal{G}$ with
$
\mathcal{E}_{ij}=\mathcal{E}_{ij}^{1}\mcup\mathcal{E}_{ij}^{2}$, where
\begin{align*}
\mathcal{E}_{ij}^{1}= \big\{(i,k)\,:\,\exists k \ \text{s.t. }\ (j,k)\in \mathcal{E},~k\neq i\big\},
~~\mathcal{E}_{ij}^{2}= \big\{(k,j)\,:\,\exists k \ \text{s.t. }\ (k,i)\in \mathcal{E},~k\neq j\big\}.
\end{align*}
\end{definition}

\begin{lemma}\label{lem2}
 Let $\mathcal{G}=(\mathrm{V},\mathcal{E})$ be a simple directed graph. Suppose for $i,j\in \mathrm{V}$ we have $(i,j), (j,i)\notin \mathcal{E}$ and $\deg^{+}({i})=k$,\ $\deg^{-}({j})=l$. Then
$\mathcal{H}_{{i}{j}}(\mathcal{G})= (\mathrm{V},\mathcal{E}_{{i}{j}})$ satisfies

$(i)$ $|\mathcal{E}_{{i}{j}}|=k+l$;

$(ii)$ $\deg^{-}({i})=l$,\ $\deg^{+}({j})=k$;

$(iii)$ all nodes have zero degree except for ${i}$, ${j}$, ${i}$'s in-neighbors and ${j}$'s out-neighbors.
\end{lemma}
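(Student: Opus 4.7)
The plan is to unpack Definition~\ref{def8} directly and verify each of (i), (ii), (iii) by a short counting argument, using the hypotheses $(i,j), (j,i)\notin \mathcal{E}$ and the fact that $\mathcal{G}$ is simple (no self-loops). This is essentially a bookkeeping lemma: all the real content sits in Definition~\ref{def8}, and the hypotheses are there precisely to ensure that the two obvious exclusions in the definition do not actually remove anything.

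For (i), I would first argue disjointness of $\mathcal{E}_{ij}^{1}$ and $\mathcal{E}_{ij}^{2}$. Every arc in $\mathcal{E}_{ij}^{1}$ has tail $i$ and every arc in $\mathcal{E}_{ij}^{2}$ has head $j$, so any arc lying in both sets is forced to be $(i,j)$; but $(i,j)\in\mathcal{E}_{ij}^{1}$ would require $(j,j)\in\mathcal{E}$, contradicting simplicity of $\mathcal{G}$. Next, since $(j,i)\notin\mathcal{E}$, the vertex $i$ is not an out-neighbor of $j$, so the clause ``$k\neq i$'' in the definition of $\mathcal{E}_{ij}^{1}$ discards no element; consequently $|\mathcal{E}_{ij}^{1}|=\deg^{-}(j)=l$. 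Symmetrically, $(j,i)\notin\mathcal{E}$ says $j$ is not an in-neighbor of $i$, so $|\mathcal{E}_{ij}^{2}|=\deg^{+}(i)=k$. Adding the two gives $|\mathcal{E}_{ij}|=k+l$.

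For (ii), I would observe that within $\mathcal{H}_{ij}(\mathcal{G})$ every arc incident to $i$ lies in $\mathcal{E}_{ij}^{1}$ and is outgoing from $i$, so (using the paper's convention that $\deg^{-}$ is the out-degree) $\deg^{-}(i)=|\mathcal{E}_{ij}^{1}|=l$. Symmetrically, every arc incident to $j$ lies in $\mathcal{E}_{ij}^{2}$ and is incoming to $j$, yielding $\deg^{+}(j)=|\mathcal{E}_{ij}^{2}|=k$. For (iii), I would simply enumerate the endpoints of the arcs in $\mathcal{E}_{ij}$: the arcs of $\mathcal{E}_{ij}^{1}$ have endpoints $\{i\}\cup\{\text{out-neighbors of }j\}$, and the arcs of $\mathcal{E}_{ij}^{2}$ have endpoints $\{j\}\cup\{\text{in-neighbors of }i\}$. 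Any node outside this union is incident to no arc of $\mathcal{H}_{ij}(\mathcal{G})$ and therefore has zero degree.

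There is no real obstacle here; the main care required is to keep the paper's nonstandard degree notation straight ($\deg^{+}$ for in-degree, $\deg^{-}$ for out-degree) and to use the hypotheses $(i,j),(j,i)\notin\mathcal{E}$ and simplicity of $\mathcal{G}$ at exactly the two places where the counting could otherwise go wrong.
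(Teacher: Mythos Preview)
Your proof is correct: unpacking Definition~\ref{def8} and doing the obvious counting, with the hypotheses $(i,j),(j,i)\notin\mathcal{E}$ and simplicity used exactly where the exclusion clauses and possible overlaps could cause trouble, is the right and essentially the only way to handle this bookkeeping lemma. The paper itself does not give a proof---it states that Lemma~\ref{lem2} (together with Lemmas~\ref{gl.0} and~\ref{gl.1}) appeared in~\cite{arxiv} and omits the details---so there is no in-paper argument to compare against; your direct verification is exactly what one would supply to fill that gap.
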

For any $A\in\gl(n)$, we can represent it uniquely in the form 
$$
A=\sum_{k=1}^{l} a_{k} E_{{i}^\ast_k{j}^\ast_k},
$$
where $a_k\neq 0\in\mathbb{R}$, $i^\ast_k,j^\ast_k\in\mathrm{V}$, and $E_{{i}^\ast_k{j}^\ast_k}\in\mathpzc{E}$. Define the map $\varphi$ that takes a matrix $A\in\gl(n)$ to a digraph $\mathcal{G}_{A}:=(\mathrm{V},\mathcal{E}_{A})$ with $\mathcal{E}_{A}=\big\{(i^\ast_1,j^\ast_1),\dots,(i^\ast_l,j^\ast_l)\big\}$. We will have established Theorem \ref{thm2.1} if we prove the lemma below.

\begin{lemma}\label{gl.2}
Consider a subset $\mathcal {S}\subseteq\mathpzc{E}$ with the associated digraph $\tau(\mathcal {S})$ and a matrix $A\in\gl(n)$ with the associated digraph $\mathcal{G}_A=(\mathrm{V},\mathcal{E}_A)$. Let $\mathcal{G}_{\mathcal {S}}=(\mathrm{V},\mathcal{E}_{\mathcal {S}})$ be $\tau(\mathcal {S})$. Then the Lie algebra generated by $\{A\}\mcup\mathcal {S}$ is equal to $\gl(n)$ if the following conditions hold:

$(i)$ Each weakly connected component of $\mathcal{G}_{\mathcal {S}}$ is strongly connected with at least two nodes;

$(ii)$ The digraph $\mathcal{G}_{\mathcal {S}}$ has at least one self-loop;

$(iii)$ The union graph $\mathcal{G}_A \mcup\mathcal{G}_{\mathcal {S}}$ is strongly connected.
\end{lemma}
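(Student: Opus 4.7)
The plan is to prove Lemma \ref{gl.2} by explicitly producing a subset $\mathcal{T}\subseteq \big\{\{A\}\mcup\mathcal{S}\big\}_{\rm LA}\cap\mathpzc{E}$ whose associated digraph $\tau(\mathcal{T})$ is strongly connected and carries at least one self-loop; Lemma \ref{gl.1}$(ii)$ will then immediately close the argument.

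First I would harvest every intra-component basis element. Let $K_1,\dots,K_r$ denote the weakly connected components of $\mathcal{G}_{\mathcal{S}}$; by hypothesis $(i)$ each $K_\ell$ is simultaneously a strongly connected component with $|K_\ell|\ge 2$. Restricting the arcs of $\mathcal{S}$ to a given $K_\ell$ and combining Lemma \ref{gl.0} with Lemma \ref{map.2} shows that iterated Lie brackets of the restricted generators produce every $E_{pq}$ with $p,q\in K_\ell$ and $p\ne q$. Doing this for each $\ell$ and adjoining the self-loop $E_{i_0 i_0}\in\mathcal{S}$ guaranteed by $(ii)$ yields a set $\mathcal{T}_0\subseteq\big\{\{A\}\mcup\mathcal{S}\big\}_{\rm LA}\cap\mathpzc{E}$ whose digraph is the disjoint union of simple complete digraphs on the $K_\ell$'s together with a self-loop at $i_0$.

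Next I would use $A$ to create every inter-component basis element prescribed by $\mathcal{G}_A$. Write $A=\sum_{k} a_k E_{i^\ast_k j^\ast_k}$ with $a_k\neq 0$, split the sum into intra-component terms (which are already scalar multiples of elements of $\mathcal{T}_0$) and inter-component terms, and subtract the former off. For each surviving arc $(p,q)\in\mathcal{E}_A$ with $p\in K_a$, $q\in K_b$ and $a\ne b$, pick auxiliary $p'\in K_a\setminus\{p\}$ and $q'\in K_b\setminus\{q\}$, which exist because $|K_a|,|K_b|\ge 2$, and observe $E_{p'p},E_{qq'},E_{pp'},E_{q'q}\in\mathcal{T}_0$. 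Applying Lemma \ref{lem1}, an iterated double bracket of the form $[E_{qq'},[E_{p'p},\,\cdot\,]]$ realizes the circumjacent closure of Definition \ref{def8}: by Lemma \ref{lem2} the support of the image is restricted to arcs incident to $\{p,p',q,q'\}$, and the inter-component contribution $a_k E_{pq}$ is transported onto a clean multiple of $E_{p'q'}$. Conjugating once more by $E_{pp'}$ and $E_{q'q}$ returns $E_{pq}$ itself into the Lie algebra. Performing this for every bridging arc of $\mathcal{G}_A$ gives a superset $\mathcal{T}\supseteq\mathcal{T}_0$ with $\tau(\mathcal{T})\supseteq\mathcal{G}_A\mcup\mathcal{G}_{\mathcal{S}}$ and the self-loop at $i_0$ intact. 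By $(iii)$ the digraph $\tau(\mathcal{T})$ is strongly connected and has a self-loop, so Lemma \ref{gl.1}$(ii)$ forces $\big\{\{A\}\mcup\mathcal{S}\big\}_{\rm LA}=\gl(n)$.

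The main obstacle lies precisely in the extraction step: several inter-component arcs of $A$ may share endpoints, so a single double bracket can stir together multiple unwanted basis terms. I would control this by ordering the arcs of $\mathcal{G}_A$ and peeling them off inductively, at each stage using the intra-component $E_{pq}$'s of $\mathcal{T}_0$ to cancel already-identified terms and relying on the degree restrictions of Lemma \ref{lem2} to ensure the residual matrix has strictly fewer inter-component arcs. Verifying that this inductive peeling terminates with every inter-component $E_{pq}$ in the Lie algebra, regardless of the coefficient pattern $\{a_k\}$ in $A$ and regardless of which self-loops of $A$ are present, will constitute the bulk of the technical work.
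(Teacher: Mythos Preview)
Your overall architecture coincides with the paper's: first harvest all intra-component $E_{pq}$ via Lemma~\ref{gl.0} and Lemma~\ref{map.2}, then use brackets with the inter-component part $\widetilde{A}$ of $A$ to produce inter-component basis elements, and close with Lemma~\ref{gl.1}$(ii)$. The difference, and the gap, is in the extraction step. Your double bracket $[E_{qq'},[E_{p'p},\widetilde{A}]]$ does \emph{not} in general produce a clean multiple of $E_{p'q'}$. A direct computation with Lemma~\ref{lem1} shows it yields a linear combination of $E_{p'q'}$ and $E_{qp}$, the second term arising whenever the reverse arc $(q',p')$ is also present in $\widetilde{A}$. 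Your follow-up conjugation by $E_{pp'}$ and $E_{q'q}$ does not separate these two terms either. Test this on $n=4$, $K_1=\{1,2\}$, $K_2=\{3,4\}$, $\widetilde{A}=E_{13}+E_{42}$: every bracket with the off-diagonal intra-component elements keeps two inter-component terms entangled, and with $|K_a|=|K_b|=2$ there is no room to choose $p',q'$ so as to dodge the collision. The inductive peeling you sketch gives no mechanism for breaking this symmetry.

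What the paper does differently is to deploy the self-loop \emph{inside} the extraction rather than reserving it for the final appeal to Lemma~\ref{gl.1}. Since the component $\mathrm{V}_1$ containing the self-loop is itself strongly connected with a self-loop, Lemma~\ref{gl.1} applied to that component alone already gives $E_{vv}\in\{\mathcal{S}\}_{\rm LA}$ for every $v\in\mathrm{V}_1$. Bracketing $\widetilde{A}$ against such a diagonal element, $[\widetilde{A},E_{vv}]$, projects onto arcs of $\widetilde{A}$ incident to the single node $v$; two subsequent circumjacent closures (Definition~\ref{def8}, Lemma~\ref{lem2}) then isolate one $E_{p^*q^*}$ cleanly. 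From there the argument propagates component by component. In your example this is immediate: $[\widetilde{A},E_{11}]=-E_{13}$ already hands you a single basis element. This use of the self-loop as a projector is the missing ingredient in your plan.
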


\begin{proof}
Let $\widetilde{\mathcal{G}}_{A}=(\mathrm{V},\widetilde{\mathcal{E}}_A)$ and $\widetilde{\mathcal{G}}_{\mathcal {S}}=(\mathrm{V},\widetilde{\mathcal{E}}_{\mathcal {S}})$ be the simple digraphs corresponding to $\mathcal{G}_A$ and $\mathcal{G}_{\mathcal {S}}$ by ignoring the self-loops, respectively.
Because the union graph $\mathcal{G}_A \mcup\mathcal{G}_{\mathcal {S}}$ is strongly connected, the union graph $\widetilde{\mathcal{G}}_A \mcup\widetilde{\mathcal{G}}_{\mathcal {S}}$ is strongly connected too.
If $\mathcal{G}_{\mathcal {S}}$ is strongly connected, then by Lemma \ref{gl.1} the Lie algebra generated by $\{A\}\mcup\mathcal {S}$ is equal to $\gl(n)$. Now, assume that $\mathcal{G}_{\mathcal {S}}$ is the union of $m$ weakly connected components with $ m\geq2$.

Let $\widetilde{\mathcal{G}}_{\mathcal {S}}^{i}=(\mathrm{V}_{i},\widetilde{\mathcal{E}}_{\mathcal {S}}^{i})$ denote the $i$-th weakly connected component of  $\widetilde{\mathcal{G}}_{\mathcal {S}}$ for $i=1,\dots,m$. It is easily seen that each $\widetilde{\mathcal{G}}_{\mathcal {S}}^{i}$ is strongly connected. According to Lemma~\ref{map.2}, there exists an integer $z_{i}$ such that $\mathcal{M}^{z_{i}}(\widetilde{\mathcal{G}}_{\mathcal {S}}^{i})$ is a simple complete digraph for each $i$. Therefore $\mathcal{M}^{z^{*}}(\widetilde{\mathcal{G}}_{\mathcal {S}})=\mcup_{i=1}^{m}\mathcal{M}^{z_{i}}(\widetilde{\mathcal{G}}_{\mathcal {S}}^{i})$, where  $z^{*}=\max_i\{z_{i}\}$. For simplicity, we denote $\mathcal{G}^{*}=(\mathrm{V},\mathcal{E}^{*})=\mathcal{M}^{z^{*}}(\widetilde{\mathcal{G}}_{\mathcal {S}})$ with $\mathcal{E}^{*}=\mcup_{i=1}^{m}\{(u,v): u, v\in\mathrm{V}_{i}, u\neq v\}$. Lemma \ref{gl.0} shows that the elements in $\{E_{ij}:(i,j)\in\mathcal{E}^{*}\}$ can be generated by iterated Lie brackets of elements in $\mathcal {S}$.

Define the digraph $\mathcal{G}_{\rm valid}$ by $\mathcal{G}_{\rm valid}:= (\mathrm{V},\mathcal{E}_{\rm valid})$, where $\mathcal{E}_{\rm valid}:=\widetilde{\mathcal{E}}_{A}\setminus \mathcal{E}^{*}$.
As the union graph $\widetilde{\mathcal{G}}_A\mcup\widetilde{\mathcal{G}}_{\mathcal {S}}$ is strongly connected while $\widetilde{\mathcal{G}}_{\mathcal {S}}$ is not, we always have $|\mathcal{E}_{\rm valid}|\geq 2$. Because $\widetilde{\mathcal{G}}_A \mcup\widetilde{\mathcal{G}}_{\mathcal {S}}$ is strongly connected, $\mathcal{G}_{\rm valid} \mcup \widetilde{\mathcal{G}}_{\mathcal {S}}$ is also strongly connected by the definition of $\mathcal{G}_{\rm valid}$. In addition, we can deduce that $\mathcal{G}_{\rm valid}$ satisfies: i) all arcs are between different $\mathrm{V}_{i}$ and no arcs within each $\mathrm{V}_{i}$; ii) each $\mathrm{V}_{i}$ has at least one node with out-degree greater than zero; iii) each $\mathrm{V}_{i}$ has at least one node with in-degree greater than zero.

Let $\widetilde{A}=\sum_{({i}^\ast_k,{j}^\ast_k)\in\mathcal{E}_{\rm valid}}a_{k}E_{{i}^\ast_k{j}^\ast_k}.$ It is clear that $\varphi(\widetilde{A})=\mathcal{G}_{\rm valid}$. The Lie bracket of $\widetilde{A}$ and $E_{ij}\in\{E_{ij}:(i,j)\in\mathcal{E}^{*}\}$ satisfies $[\widetilde{A}, E_{ij}]\in\big\{\{A\}\mcup\mathcal {S}\big\}_{\rm LA}$. In addition, the relationship between the Lie bracket $[\widetilde{A},E_{ij}]$ and the circumjacent closure at $\langle i,j\rangle$ of $\mathcal{G}_{\rm valid}$ satisfies
\begin{equation}\label{eq1}
\varphi\big([\widetilde{A},E_{ij}] \big)=\mathcal{H}_{ij}(\mathcal{G}_{\rm valid})
\end{equation}
for $ E_{ij}\in \{E_{ij}:(i,j)\in\mathcal{E}^{*}\}$. To prove the statement, we need to consider the Lie algebra generated by $\{A\}\mcup\mathcal {S}$. Since $\mathcal{G}_{\mathcal {S}}$ has at least one self-loop, without loss of generality we assume that the node $v_{11}\in\mathrm{V}_{1}$ has a self-loop, i.e., $E_{v_{11}v_{11}}\in\mathcal {S}$. The remainder of the proof contains two steps.

\noindent{\it Step 1.} We first prove that if $\mathcal{G}_{\rm valid}$ has arcs from the nodes in $\mathrm{V}_{1}$ to the nodes in $\mathrm{V}_{k}, k\in \{2,\dots,m\}$,
then  all elements in the set $\{E_{ij}:i\in\mathrm{V}_{1},~ j\in\mathrm{V}_{k}\}$ can be obtained by iterated Lie brackets of elements in $\{\widetilde{A}\}\mcup\mathcal {S}$. Note that for any node $v\in\mathrm{V}_{1}$ in  $\mathcal{G}_{\rm valid}$ with $\deg^-(v)>0$, Lemma \ref{gl.1} yields $E_{vv}\in\mathcal \{{S}\}_{\rm LA}$ since $\mathcal{G}_{\mathcal {S}}^{1}$ is a strongly connected digraph with self-loops. We only need to consider the case where the out-degree of node $v_{11}$ is greater than zero. The analysis for the other nodes in $\mathrm{V}_{1}$ whose out-degree is greater than zero can be similarly established.

Let $\deg^-(v_{11})=k>0$, and $v_{i_{1}j_{1}},\dots,v_{i_{k}j_{k}}$ be the out-neighbors of $v_{11}$. Apparently, these out-neighbors are in $\mathrm{V}_{2}\mcup\dots\mcup\mathrm{V}_{m}$. To be specific, let $v_{i_{1}j_{1}},\dots,v_{i_{r}j_{r}}\in\mathrm{V}_{2}$ with $1\leq r\leq k$. Applying Lemma \ref{lem1}, we deduce that the digraph $\varphi\big([\widetilde{A},E_{v_{11}v_{11}}]\big)$ satisfies: i) $\deg^-(v_{11})=k>0$; ii) other nodes in $\mathrm{V}_{1}$ has zero degree. Fix $v_{12}\in{\rm V}_1$ and $v_{12}\neq v_{11}$. Using Lemma \ref{lem2}, we have
\begin{equation*}
  \mathcal{H}_{v_{12}v_{11}}\Big(\varphi\big([\widetilde{A},E_{v_{11}v_{11}}]\big)\Big)=\Big(\mathrm{V},\big\{(v_{12},v_{i_{1}j_{1}}),(v_{12},v_{i_{2}j_{2}}),\dots,(v_{12},v_{i_{k}j_{k}})\big\}\Big).
  \end{equation*}
Let $v_{i_{*}j_{*}}\neq v_{i_{1}j_{1}} \in \mathrm{V}_{2}$. By selecting the node pair $\langle v_{i_{1}j_{1}},v_{i_{*}j_{*}}\rangle$ we can obtain
\begin{equation}\label{eq2}
 \mathcal{H}_{v_{i_{1}j_{1}}v_{i_{*}j_{*}}}\Big(\mathcal{H}_{v_{12}v_{11}}\big(\varphi([\widetilde{A},E_{v_{11}v_{11}}])\big)\Big)= \Big(\mathrm{V},\big\{(v_{12},v_{i_{*}j_{*}})\big\}\Big).
\end{equation}
Since $(v_{12},v_{11})$ and $(v_{i_{1}j_{1}},v_{i_{*}j_{*}})$ are in $\mathcal{E}^{*}$, i.e., $E_{v_{12}v_{11}},E_{v_{i_{1}j_{1}}v_{i_{*}j_{*}}}\in\{\mathcal {S}\}_{\rm LA}$, from \eqref{eq1} and \eqref{eq2} we conclude that
$$
\varphi\Big([[[\widetilde{A},E_{v_{11}v_{11}}],E_{v_{12}v_{11}}],E_{v_{i_{1}j_{1}}v_{i_{*}j_{*}}}]\Big)
=\Big(\mathrm{V},\big\{(v_{12},v_{i_{*}j_{*}})\big\}\Big). $$
This implies that
$$[[[\widetilde{A},E_{v_{11}v_{11}}],E_{v_{12}v_{11}}],E_{v_{i_{1}j_{1}}v_{i_{*}j_{*}}}]= a^{*}E_{v_{12}v_{i_{*}j_{*}}},$$
where $a^{*}$ is the coefficient generated during the operation of the Lie brackets.

Therefore,
$E_{v_{12}v_{i_{*}j_{*}}}\in\mathpzc{E}_{1}$ can be obtained by iterated Lie brackets of elements in $\{\widetilde{A}\}\mcup\mathcal {S}$.
This, together with the strong connectivity of $\mathcal{G}_{\mathcal {S}}^{1}$ and $\mathcal{G}_{\mathcal {S}}^{2}$, implies that all elements in the set  $\{E_{ij}:i\in\mathrm{V}_{1}, ~j\in\mathrm{V}_{2}\}$ can be obtained by iterated Lie brackets of elements in $\{\widetilde{A}\}\mcup\mathcal {S}$.

\noindent{\it Step 2.} Analysis similar to that in the proof of Theorem 3 in \cite{arxiv} shows that all elements in set  $\mathcal {S}^*:= \{E_{ij}:i\in\mathrm{V}_{1},~j\in\mathrm{V}_{2}\mcup\dots\mcup\mathrm{V}_{m}\}\mcup\{E_{ij}:i\in\mathrm{V}_{2}\mcup\dots\mcup\mathrm{V}_{m},~ j\in\mathrm{V}_{1}\}$ can be obtained by iterated Lie brackets of elements in $\{\widetilde{A}\}\mcup\mathcal {S}$. Since $\mathcal {S}^*\mcup \mathcal {S}\subseteq\mathpzc{E}$ and $\tau(\mathcal {S}^*\mcup \mathcal {S})$ is a connected digraph with self-loops, we conclude from Lemma \ref{gl.1} that $\{\mathcal {S}^*\mcup \mathcal {S}\}_{\rm LA}=\gl(n)$, and consequently, we have $\big\{\{A\}\mcup\mathcal {S}\big\}_{\rm LA}=\gl(n)$. This completes the proof.
\end{proof}

\subsection*{C. Proof of Theorem \ref{thm2.2}}
The statement in Theorem \ref{thm2.2} will be proved once we prove the lemma below.
\begin{lemma}\label{gl.3}
Consider two subset $\mathcal {A}, \mathcal {S}\subseteq \mathpzc{E}$ with the associated graphs $\mathcal{G}_{\mathcal {A}}=\tau(\mathcal {A})$ and $\mathcal{G}_{\mathcal {S}}=\tau(\mathcal {S})$, respectively. There exists a matrix $A\in\Sigma_{\rm r}(\mathcal {A})$ such that the Lie algebra generated by $\{A\}\mcup\mathcal{S}$ is equal to $\gl(n)$ if the following conditions hold:

$(i)$ Each weakly connected component of $\mathcal{G}_{\mathcal {S}}$ is strongly connected with at least two nodes;

$(ii)$ The union graph $\mathcal{G}_{\mathcal {A}}\mcup\mathcal{G}_{\mathcal {S}}$ is a strongly connected digraph with self-loops.
\end{lemma}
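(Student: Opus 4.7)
The plan is to reduce Lemma \ref{gl.3} to Lemma \ref{gl.2} via a dichotomy on whether $\mathcal{G}_{\mathcal{S}}$ already carries a self-loop. Writing any candidate matrix in the form $A = \sum_{E_{ij} \in \mathcal{A}} l_{ij} E_{ij}$ with every $l_{ij} \neq 0$, the assumption $\mathcal{A} \subseteq \mathpzc{E}$ guarantees $\varphi(A) = \mathcal{G}_{\mathcal{A}}$. If $\mathcal{G}_{\mathcal{S}}$ happens to have a self-loop, then the three hypotheses of Lemma \ref{gl.2} hold verbatim for $(\mathcal{A},\mathcal{S})$: the component structure is inherited; $\mathcal{G}_{\mathcal{S}}$ has a self-loop; and $\mathcal{G}_A \mcup \mathcal{G}_{\mathcal{S}} = \mathcal{G}_{\mathcal{A}} \mcup \mathcal{G}_{\mathcal{S}}$ is strongly connected. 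Therefore any $A \in \Sigma_{\rm r}(\mathcal{A})$ would do in this easy case.

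\textbf{The loop must come from $\mathcal{A}$.} The non-trivial case is when $\mathcal{G}_{\mathcal{S}}$ has no self-loop, so all self-loops of $\mathcal{G}_{\mathcal{A}} \mcup \mathcal{G}_{\mathcal{S}}$ sit in $\mathcal{G}_{\mathcal{A}}$; fix some $E_{i_0 i_0} \in \mathcal{A}$. My plan is to show that a suitable $A$ can be chosen so that $E_{i_0 i_0}$ itself lies in $\{\{A\} \mcup \mathcal{S}\}_{\rm LA}$, and then to bootstrap via Lemma \ref{gl.2}. Setting $\mathcal{S}' := \mathcal{S} \mcup \{E_{i_0 i_0}\}$, the digraph $\tau(\mathcal{S}')$ has the same weakly connected components as $\tau(\mathcal{S})$ (so each is strongly connected with at least two nodes), additionally carries a self-loop at $i_0$, and $\mathcal{G}_A \mcup \tau(\mathcal{S}') \supseteq \mathcal{G}_{\mathcal{A}} \mcup \mathcal{G}_{\mathcal{S}}$ remains strongly connected; Lemma \ref{gl.2} applied to $(A,\mathcal{S}')$ then yields $\gl(n) = \{\{A\} \mcup \mathcal{S}'\}_{\rm LA} \subseteq \{\{A\} \mcup \mathcal{S}\}_{\rm LA}$.

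\textbf{Extracting $E_{i_0 i_0}$.} To extract $E_{i_0 i_0}$, I would first invoke Lemma \ref{gl.0} componentwise: for each weakly (hence strongly) connected component $\mathrm{V}_a$ of $\mathcal{G}_{\mathcal{S}}$, every $E_{uv}$ with $u,v \in \mathrm{V}_a$ and $u \neq v$ is already in $\{\mathcal{S}\}_{\rm LA}$. This allows subtracting $l_{ij} E_{ij}$ from $A$ for every intra-component arc $(i,j) \in \mathcal{E}_{\mathcal{A}}$, reducing to an element $A'$ whose non-zero terms are only the diagonal entries $l_{ii} E_{ii}$ with $E_{ii} \in \mathcal{A}$ together with the inter-component off-diagonals of $\mathcal{A}$. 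For each inter-component term I plan to imitate the circumjacent-closure mechanism from Lemma \ref{gl.2}: sequences of double brackets $[[A', E_{uv}], E_{u'v'}]$ with intra-component generators transport the inter-component entries and produce residues that are themselves intra-component, which can then be subtracted off again. After iterating these projections, a non-zero multiple of $E_{i_0 i_0}$ should remain, exactly in the spirit of the explicit computation $\widetilde{A} - [[\widetilde{A}, E_{21}], E_{12}] = l_3 E_{31} + l_4 E_{33}$ carried out in Example 4.

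\textbf{Main obstacle.} The hard part is the combinatorial bookkeeping for this projection sequence. The identity $[E_{ii}, E_{jk}] = \delta_{ij} E_{ik} - \delta_{ik} E_{ji}$ shows that a bracket designed to kill an inter-component term also acts non-trivially on the diagonals $l_{ii} E_{ii}$ whenever $i \in \{j,k\}$, so spurious off-diagonal terms keep getting generated and the order of operations matters. I expect to control this by performing first those bracket sequences whose indices avoid $i_0$ and the component $\mathrm{V}_{a_0} \ni i_0$, then handling the terms incident to $\mathrm{V}_{a_0}$ last, and by choosing the coefficients $l_{ij}$ generically (e.g.\ all equal to $1$, or along a single line in parameter space where an explicit non-vanishing minor is verifiable) so that the final scalar in front of $E_{i_0 i_0}$ remains non-zero. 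An induction on the number of inter-component arcs of $\mathcal{A}$, together with the non-vanishing coefficients guaranteed by $A \in \Sigma_{\rm r}(\mathcal{A})$, should close the argument.
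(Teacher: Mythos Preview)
Your reduction to Lemma \ref{gl.2} via $\mathcal{S}' = \mathcal{S} \cup \{E_{i_0 i_0}\}$ is a clean idea, and the opening dichotomy together with the observation $\varphi(A) = \mathcal{G}_{\mathcal{A}}$ are fine. The gap is in the extraction step. After you subtract all intra-component off-diagonals and run the double-bracket projections $X \mapsto X - [[X,E_{i^*j^*}],E_{j^*i^*}]$ (corrected by the intra-component element $E_{i^*i^*}-E_{j^*j^*}$), what survives is not a multiple of a single $E_{i_0 i_0}$ but a sum $\sum_{k} a_k^{*}\, E_{v_k v_k}$ with one diagonal term for \emph{each} component of $\mathcal{G}_{\mathcal{S}}$ that meets a self-loop of $\mathcal{A}$, where $a_k^{*}$ is the sum of the loop-coefficients inside that component. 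These cross-component diagonals cannot be collapsed further using $\{\mathcal{S}\}_{\rm LA}$ alone, because $E_{ii}-E_{jj}$ is available there only when $i,j$ lie in the same component. Your appeal to Example 4 is misleading on exactly this point: there $\mathcal{A}$ carries a single self-loop, so the residual sum is automatically monomial; in general (say two components, each with one loop in $\mathcal{A}$) no choice of nonzero coefficients makes all but one $a_k^{*}$ vanish, and no ordering of the projections changes this, since brackets with intra-component generators that avoid $v_k$ annihilate $E_{v_k v_k}$ and hence leave it untouched.

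The paper circumvents this by never isolating a single $E_{i_0 i_0}$. It keeps the full diagonal sum $\sum_k a_k^{*} E_{v_k v_k}$ (Step 1), shows it can be slid to arbitrary representatives $v_k \in \mathrm{V}_k$ (Step 2), and then uses two such sums with different representatives as a surrogate for the missing self-loop in the circumjacent-closure computation of Lemma \ref{gl.2} (Step 3), choosing $A^{*}\in\Sigma_{\rm r}(\mathcal{A})$ with $\mathrm{tr}\,A^{*}\neq 0$ so that the relevant coefficients do not cancel. Only \emph{after} cross-component off-diagonals $E_{pq}$ have been produced this way do cross-component differences $E_{pp}-E_{qq}$ become available; at that point one could indeed recover a single $E_{i_0 i_0}$, but by then the proof is essentially complete. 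If you want to salvage your route, you should replace ``extract $E_{i_0 i_0}$'' by ``extract the diagonal sum and use it directly in the closure argument,'' which is precisely the paper's line.
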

\begin{proof}
Note that $\mathcal{G}_{\mathcal {A}}\mcup\mathcal{G}_{\mathcal {S}}$ has self-loops. That is, $\mathcal{G}_{\mathcal {A}}$ or $\mathcal{G}_{\mathcal {S}}$ has self-loops. Using Lemma \ref{gl.2}, for any $A\in\Sigma_{\rm r}(\mathcal {A})$, the generated Lie algebra by $\{A\}\mcup\mathcal{S}$ is equal to $\gl(n)$ if $\mathcal{G}_{\mathcal {S}}$ has self-loops. Thus, in the rest of the proof, we consider the case where only $\mathcal{G}_{\mathcal {A}}$ has self-loops. Set  $\mathcal {A}=\{E_{i_1i_1},E_{i_2i_2},\dots,E_{i_li_l},E_{i_{l+1}j_{l+1}},\dots,E_{i_{l+r}j_{l+r}}\}$ with $l\geq 1$. For any $A\in\Sigma_{\rm r}(\mathcal {A})$,
$$
A=\sum_{k=1}^l a_{k} E_{i_ki_k}+\sum_{k=l+1}^{l+r} a_{k} E_{i_kj_k},
$$
where $a_{k}\neq 0\in\mathbb{R}$. It follows that $\mathcal{G}_{A}=\mathcal{G}_{\mathcal {A}}$.

If $\mathcal{G}_{\mathcal {S}}$ is strongly connected, then by Lemma \ref{gl.1} the Lie algebra generated by $\mathcal {S}$ contains $\mathfrak{sl}(n)$. Hence, for any $A\in\Sigma_{\rm r}(\mathcal {A})$, there holds $\sum_{k=1}^{l} a_{k} E_{i_ki_k}\in \big\{\{A\}\mcup\mathcal{S}\big\}_{\rm LA}.$ Because
$E_{ii}-E_{jj}\in\{\mathcal {S}\}_{\rm LA}$, for $1\leq i\neq j \leq n$, we can obtain
\begin{equation}\label{eq3}
    \begin{split}
    \sum_{k=1}^{l}a_{k}E_{i_{k}i_{k}} & +a_{l}(E_{i_{l-1}i_{l-1}}-E_{i_{l}i_{l}}) \\
    & +(a_{l}+a_{l-1})(E_{i_{l-2}i_{l-2}}-E_{i_{l-1}i_{l-1}})\\
    & +\cdots+\Big(\sum_{k=2}^{l}a_{k}\Big)(E_{i_{1}i_{1}}-E_{i_{2}i_{2}})\\
    & =\Big(\sum_{k=1}^{l}a_{k}\Big)E_{i_{1}i_{1}}\in \big\{\{A\}\mcup\mathcal{S}\big\}_{\rm LA}.
    \end{split}
\end{equation}
Therefore, when ${\rm tr}A\neq 0$, i.e., $\sum_{k=1}^{l}a_{k}\neq0$, we have $E_{i_{1}i_{1}}\in\big\{\{A\}\mcup\mathcal{S}\big\}_{\rm LA}.$ Lemma \ref{gl.1} now yields $\big\{\{A\}\mcup\mathcal{S}\big\}_{\rm LA}=\gl(n)$. Consequently, for all $A\in\Sigma_{\rm r}(\mathcal {A})$ with ${\rm tr}A\neq 0$, the Lie algebra generated by $\{A\}\mcup\mathcal{S}$ is equal to $\gl(n)$.

 Now, let $\mathcal{G}_{\mathcal {S}}$ be the union of $m$ weakly connected components with $ m\geq2$. Consider $A\in\Sigma_{\rm r}(\mathcal {A})$ with ${\rm tr}A\neq 0$. We continue to use the definitions of $\widetilde{\mathcal{G}}_{A}$, $\widetilde{\mathcal{G}}_{\mathcal {S}}$, $\mathcal{G}^{*}$, $\mathcal{G}_{\rm valid}$ and $\widetilde{A}$ in the proof of Lemma \ref{gl.2}. By the definition of $\widetilde{A}$, one has
 $$\widetilde{A}+\sum_{k=1}^{l} a_{k} E_{i_ki_k}\in \big\{\{A\}\mcup\mathcal{S}\big\}_{\rm LA}.$$
 Recall that all the elements in $\{E_{ij}:(i,j)\in\mathcal{E}^{*}\}$ can be generated by iterated Lie brackets of elements in $\mathcal {S}$. Using Lemma \ref{lem1}, we have
\begin{equation}\label{eq11}
 \{\mathcal{S}\}_{\rm LA}\supset\{E_{ij}:~(i,j)\in\mathcal{E}^{*}\}\mcup\{E_{ii}-E_{jj}:~(i,j)\in\mathcal{E}^{*}\}.
\end{equation}
Without loss of generality, we assume $\{i_1,\dots,i_{l_1}\}\in\mathrm{V}_1$, $\{i_{l_1+1},\dots,i_{l_2}\}\in\mathrm{V}_2,\dots, \{i_{l_{r}+1},\dots,i_{l_{r+1}}=i_{l}\}\in\mathrm{V}_{r+1}$, with $1\leq r+1\leq m$. Repeating the process in \eqref{eq3} for $\widetilde{A}+\sum_{k=1}^{l} a_{k} E_{i_ki_k}$ gives
$$\widetilde{A}+\sum_{k=1}^{r+1} a_{k}^* E_{i_{l_k}i_{l_k}}\in \big\{\{A\}\mcup\mathcal{S}\big\}_{\rm LA},$$
where $a_{k}^*\neq 0$ if and only if $\sum_{j=l_{k-1}+1}^{l_k} a_j\neq0$. There must exist a $1\leq k\leq r+1$ such that $a_{k}^*\neq 0$ since ${\rm tr}A\neq 0$. From now on, we will write it simply as $\widetilde{A}+\sum_{k=1}^{r+1} a_{k} E_{i_{k}i_{k}}$ when no confusion can arise. It is worth pointing out that node $i_k$ is in $\mathrm{V}_k$ for $k=1,\dots,r+1.$ The remainder of the proof will be divided into three steps.

\noindent{\it Step 1.} We first prove that the following relation holds:
\begin{equation}\label{eq4}
\sum_{k=1}^{r+1} a_{k} E_{i_ki_k}\in \big\{\{A\}\mcup\mathcal{S}\big\}_{\rm LA}.
\end{equation}
To this end, consider $E_{i^*j^*}\in \{E_{ij}:(i,j)\in\mathcal{E}^{*}\}$. The matrix $\widetilde{A}$ can be decomposed into $\widetilde{A}=\widetilde{A}_1+\widetilde{A}_2+\widetilde{A}_3$, where $\widetilde{A}_1$ is the part of the linear combination of elements of the form $E_{ri^*}$ in $\widetilde{A}$, $\widetilde{A}_2$ is the part of the linear combination of elements of the form $E_{j^*t}$ in $\widetilde{A}$, and the remaining part is $\widetilde{A}_3$. The proof of \eqref{eq4} is based on the following computation.
\begin{align*}
  [[\widetilde{A}+\sum_{k=1}^{r+1} a_{k} E_{i_{k}i_{k}},E_{i^*j^*}],E_{j^*i^*}] =&  [[\widetilde{A}_1+\widetilde{A}_2+\widetilde{A}_3+\sum_{k=1}^{r+1} a_{k} E_{i_{k}i_{k}},E_{i^*j^*}],E_{j^*i^*}] \\
  = & \widetilde{A}_1+\widetilde{A}_2+\delta_{i_ki^*}a_k(E_{i^*i^*}-E_{j^*j^*})+\delta_{i_kj^*}a_k(E_{j^*j^*}-E_{i^*i^*}).
\end{align*}
Set $A'=\delta_{i_ki^*}a_k(E_{i^*i^*}-E_{j^*j^*})+\delta_{i_kj^*}a_k(E_{j^*j^*}-E_{i^*i^*})$. Of course, $A'\in \big\{\{A\}\mcup\mathcal{S}\big\}_{\rm LA}$. Then there holds
\begin{align*}
 & \widetilde{A}+\sum_{k=1}^{r+1} a_{k} E_{i_{k}i_{k}}-\big((\widetilde{A}_1+\widetilde{A}_2+A')-A'\big)\\ =& \widetilde{A}_1+\widetilde{A}_2+\widetilde{A}_3+\sum_{k=1}^{r+1} a_{k} E_{i_{k}i_{k}}- (\widetilde{A}_1+\widetilde{A}_2)\\
  = & \widetilde{A}_3+\sum_{k=1}^{r+1} a_{k} E_{i_{k}i_{k}}\in \big\{\{A\}\mcup\mathcal{S}\big\}_{\rm LA}.
 \end{align*}
The term $\widetilde{A}_3+\sum_{k=1}^{r+1} a_{k} E_{i_{k}i_{k}}$ can be handled in much the same way, the only difference being in the choice of $E_{i^*j^*}$. Repeating this process, we can finally conclude that $\sum_{k=1}^{r+1} a_{k} E_{i_{k}i_{k}}\in \big\{\{A\}\mcup\mathcal{S}\big\}_{\rm LA}$.

\noindent{\it Step 2.} We next prove that for any $v_k\in\mathrm{V}_k, k=1,\dots,r+1$, there holds
\begin{equation}\label{eq5}
\sum_{k=1}^{r+1} a_{k} E_{v_{k}v_{k}}\in \big\{\{A\}\mcup\mathcal{S}\big\}_{\rm LA}.
\end{equation}
Recall that $i_k\in\mathrm{V}_k$, for $k=1,\dots,r+1$. Based on \eqref{eq11} and \eqref{eq4}, it can be concluded that 
\begin{equation*}
\sum_{k=1}^{r+1} a_{k} E_{i_{k}i_{k}}+a_1(E_{v_1v_1}-E_{i_1i_1})=a_1E_{v_1v_1}+\sum_{k=2}^{r+1} a_{k} E_{i_{k}i_{k}}\in\big\{\{A\}\mcup\mathcal{S}\big\}_{\rm LA},
\end{equation*}
for any $v_1\in\mathrm{V}_{1}$ and $v_1\neq i_1$. It is sufficient to show that the statement in \eqref{eq5} holds.

\noindent{\it Step 3.} In this step, we prove that there must exist a matrix $A^*\in\Sigma_{\rm r}(\mathcal {A})$ such that $\big\{\{A^*\}\mcup\mathcal{S}\big\}_{\rm LA}=\gl(n)$. As the {\it step 1} in the proof of Lemma \ref{gl.2}, we consider the digraph $\mathcal{G}_{\rm valid}$. Let $v_{11}$ be the node in $\mathrm{V}_{1}$ with $\deg^-(v_{11})=l>0$, and $v_{i_{1}j_{1}},\dots,v_{i_{l}j_{l}}$ be the out-neighbors of $v_{11}$. Let $v_{k1}$ and $v_{k2}$ denote two different nodes in $\mathrm{V}_{k}$, for $k=1,\dots,r+1$.
From \eqref{eq5} we obtain
$$a_1E_{v_{11}v_{11}}+\sum_{k=2}^{r+1} a_{k} E_{v_{k1}v_{k1}}\in \big\{\{A\}\mcup\mathcal{S}\big\}_{\rm LA},$$
and
$$a_1E_{v_{11}v_{11}}+\sum_{k=2}^{r+1} a_{k} E_{v_{k2}v_{k2}}\in \big\{\{A\}\mcup\mathcal{S}\big\}_{\rm LA}.$$
Write $\mathcal{G}=\varphi\big([a_1E_{v_{11}v_{11}}+\sum_{k=2}^{r+1} a_{k} E_{v_{k2}v_{k2}},[a_1E_{v_{11}v_{11}}+\sum_{k=2}^{r+1} a_{k} E_{v_{k1}v_{k1}}, \widetilde{A}]]\big)$. The digraph $\mathcal{G}$ satisfies: i) $\deg(v_{12})=0$; ii) all edges are adjacent to node $v_{11}$; iii) the set of out-neighbors of node $v_{11}$ is a subset of $\{v_{i_{1}j_{1}},\dots,v_{i_{l}j_{l}}\}$.
Let $\mathcal {O}$ be the set of out-neighbors of node $v_{11}$ in $\mathcal{G}$. Whether the set $\mathcal {O}$ and the set $\{v_{i_{1}j_{1}},\dots,v_{i_{l}j_{l}}\}$ are equal is related to the combination coefficient of $A$. It follows that there must exist a matrix $A^*\in\Sigma_{\rm r}(\mathcal {A})$ such that $\mathcal {O}=\{v_{i_{1}j_{1}},\dots,v_{i_{l}j_{l}}\}$. Proceeding as in the proof of Lemma \ref{gl.2}, we have $\big\{\{A^*\}\mcup\mathcal{S}\big\}_{\rm LA}=\gl(n)$, and the proof is complete.
\end{proof}

\subsection*{D. Proof of Proposition \ref{su.1}}

\begin{definition}\label{map.1}
$(i)$ Let ${\rm S}_1\subseteq\mathpzc{B}$, $\mathrm{S}_2\subseteq\mathpzc{C}$ and $\mathrm{S}_3\subseteq\mathpzc{D}$. The edge-colored multigraph $\mathscr{G}_{\mathrm{S}_1\mcup\mathrm{S}_2\mcup\mathrm{S}_3}$ is given by $\mathscr{G}_{\mathrm{S}_1\mcup\mathrm{S}_2\mcup\mathrm{S}_3}=(\mathrm{V},\mathscr{E}_{\mathrm{S}_1\mcup\mathrm{S}_2\mcup\mathrm{S}_3})$ with $\mathscr{E}_{\mathrm{S}_1\mcup\mathrm{S}_2\mcup\mathrm{S}_3}=\mathscr{E}_{\mathrm{S}_1}\mcup\mathscr{E}_{\mathrm{S}_2}\mcup\mathscr{E}_{\mathrm{S}_3}$, where $\mathscr{E}_{\mathrm{S}_1}=\big\{\{i,j;{\rm Blue}\}:B_{ij}\in\mathrm{S}_1\big\}$, $\mathscr{E}_{\mathrm{S}_2}=\big\{\{i,j;{\rm Red}\}:C_{ij}\in\mathrm{S}_2\big\}$, and $\mathscr{E}_{\mathrm{S}_3}=\big\{\{k,k;{\rm Green}\}: k=i,j, D_{ij}\in\mathrm{S}_3\big\}$.

$(ii)$ Let $\mathscr {G}$ be an edge-colored multigraph without self-loops, i.e., there are ${\rm S}_1\subseteq\mathpzc{B}$ and $\mathrm{S}_2\subseteq\mathpzc{C}$, such that $\mathscr {G}=\mathscr{G}_{\mathrm{S}_1\mcup\mathrm{S}_2}$. Then the edge-colored multigraph transitive closure mapping, $\mathcal{\mathcal {T}}(\cdot)$, is defined as
$$
\mathcal{T}\big(\mathscr{G}_{\mathrm{S}_1\mcup\mathrm{S}_2}\big) = \big(\mathrm{V}, \mathscr{E}_{\mathrm{S}_1\mcup\mathrm{S}_2}\mcup\mathscr{E}_1\mcup\mathscr{E}_2\mcup\mathscr{E}_3\big),
$$
where $\mathscr{E}_1=\big\{\{i,k;{\rm Blue}\}:\,\exists j \ \text{s.t. }\ \{i,j;{\rm Blue}\}\in \mathscr{E}_{\mathrm{S}_1}, \{j,k;{\rm Blue}\}\in \mathscr{E}_{\mathrm{S}_1} \big\}$, $\mathscr{E}_2=\big\{\{i,k;{\rm Blue}\}:\,\exists j \ \text{s.t. }\ \{i,j;{\rm Red}\}\in \mathscr{E}_{\mathrm{S}_2}, \{j,k;{\rm Red}\}\in \mathscr{E}_{\mathrm{S}_2} \big\}$ and $\mathscr{E}_3=\big\{\{i,k;{\rm Red}\}:\,\exists j \ \text{s.t. }\ \{i,j;{\rm Red}\}\in \mathscr{E}_{\mathrm{S}_2}, \{j,k;{\rm Blue}\}\in \mathscr{E}_{\mathrm{S}_1}, i\neq k \big\}$.
\end{definition}
We then recursively define $\mathcal{T}^{k}(\mathscr{G}):=\mathcal{T}(\mathcal{T}^{k-1}(\mathscr{G}))$ with $\mathcal{T}^{0}(\mathscr{G}):=\mathscr{G}$.
It is worth pointing out that there exists an integer $z$ such that $K_n$ is a spanning subgraph of $\mathcal{T}^{z}(\mathscr{G})$ if and  only if $\mathscr{G}$ is connected, where $K_n$ is the complete graph of $n$ vertices.

\begin{proposition}\label{su.1}
Consider a subset ${\rm S}\subseteq\mathpzc{B}\mcup\mathpzc{C}\mcup\mathpzc{D}$ with the associated edge-colored multigraph $\mathscr{G}_{\rm S}$. Then the Lie algebra generated by ${\rm S}$ is equal to $\su(n)$ if and only if $\mathscr{G}_{\rm S}$ satisfies one of the following conditions:

$(i)$ $\mathscr{G}_{\rm S}$ has edges of at least two colors, and all the Blue edges together with the vertex set $\mathrm{V}$ form a connected graph;

$(ii)$ $\mathscr{G}_{\rm S}$ is a connected graph with self-loops;

$(iii) $ $\mathscr{G}_{\rm S}$ is a connected graph with a cycle containing an odd number of Red edges.
\end{proposition}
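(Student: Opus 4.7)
My plan is to prove both implications by reducing everything to the fundamental commutation identities in $\su(n)$. For distinct indices $i,j,k$ I will repeatedly use
\[
[B_{ij},B_{jk}]=B_{ik},\qquad [C_{ij},C_{jk}]=-B_{ik},\qquad [B_{ij},C_{jk}]=C_{ik},
\]
together with the ``closing'' identities
\[
[B_{ij},C_{ij}]=2D_{ij},\qquad [B_{ij},D_{ij}]=-2C_{ij},\qquad [C_{ij},D_{ij}]=2B_{ij}.
\]
The first three mirror precisely the color-composition rules built into the transitive-closure operator $\mathcal{T}$ of Definition \ref{map.1}: Blue+Blue and Red+Red compose to a Blue edge, and Blue+Red composes to a Red edge. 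The last three encode the algebraic meaning of ``color flipping'' either at a self-loop or on a matching Blue/Red multi-edge.

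For sufficiency I would handle the three conditions in turn. In \emph{Case (i)}, Lemma \ref{so.1} applied to the Blue subgraph yields $\so(n)\subseteq\{{\rm S}\}_{\rm LA}$; a single additional Red generator $C_{pq}$ sweeps out every $C_{ij}$ via $[B_{jp},C_{pq}]=C_{jq}$, and then $[B_{ij},C_{ij}]=2D_{ij}$ produces every $D_{ij}$. If the additional generator is instead a Green $D_{pq}$, the relation $[B_{pj},D_{pq}]=-C_{pj}$ first extracts a Red element and we proceed as above. In \emph{Case (iii)}, I would fix a cycle $v_1v_2\cdots v_kv_1$ whose total Red-count is odd. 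The two arcs from $v_1$ to $v_k$---the length-$(k-1)$ arc through $v_2,\dots,v_{k-1}$ and the length-$1$ closing edge---must carry Red-counts of opposite parities. Iterating brackets along the long arc produces either $B_{v_1v_k}$ or $C_{v_1v_k}$ according to its parity, and the closing edge supplies the opposite color; hence both $B_{v_1v_k}$ and $C_{v_1v_k}$ lie in $\{{\rm S}\}_{\rm LA}$, so $\tfrac{1}{2}[B_{v_1v_k},C_{v_1v_k}]=D_{v_1v_k}$ yields a self-loop and the problem reduces to Case (ii). In \emph{Case (ii)}, starting from a self-loop $D_{pq}$ on a connected Blue+Red subgraph, the identities $[B_{pj},D_{pq}]=-C_{pj}$ and $[C_{pj},D_{pq}]=B_{pj}$ for $j\notin\{p,q\}$ allow me to flip the color of any edge incident to $p$ or $q$. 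For any node $r$ accessible from $p$ through a $\mathcal{T}$-generated path, I thereby obtain both $B_{rp}$ and $C_{rp}$, hence $D_{rp}$; this new self-loop expands the set of nodes at which color-flipping is available, and a short induction on graph distance populates $\{{\rm S}\}_{\rm LA}$ with every basis element of $\su(n)$.

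For necessity I would argue contrapositively. If $\mathscr{G}_{\rm S}$ is disconnected, the generators are simultaneously block-diagonal and $\{{\rm S}\}_{\rm LA}\ne\su(n)$ follows immediately. If $\mathscr{G}_{\rm S}$ is connected but none of (i)--(iii) holds, then $\mathscr{G}_{\rm S}$ has no self-loops, no odd-Red cycles, and its Blue subgraph is either empty or disconnected. The monochromatic Blue case gives $\{{\rm S}\}_{\rm LA}\subseteq\so(n)\subsetneq\su(n)$. Otherwise, Harary's balance theorem produces a non-trivial bipartition $V=V_+\cup V_-$ such that every Blue edge lies within a part and every Red edge crosses between parts. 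Setting $S=\mathrm{diag}(I_{|V_+|},-I_{|V_-|})$ and $\tau(X)=\overline{SXS^{-1}}$, one checks that $\tau$ is an involutive Lie-algebra automorphism of $\su(n)$ with $\tau(B_{ij})=s_is_jB_{ij}$, $\tau(C_{ij})=-s_is_jC_{ij}$, and $\tau(D_{ij})=-D_{ij}$, where $s_i=\pm 1$ indicates which part contains $i$. Every Blue generator (within-part) and every Red generator (cross-part) is therefore fixed by $\tau$, while every $D_{ij}$ is negated. Since $\mathrm{fix}(\tau)$ is a proper Lie subalgebra of $\su(n)$ of dimension $\binom{n}{2}$, no $D_{ij}$ can be generated, forcing $\{{\rm S}\}_{\rm LA}\subsetneq\su(n)$.

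The main obstacle is Case (ii) of sufficiency: the initial self-loop $D_{pq}$ only supplies color flips at $p$ and $q$, and one must argue that producing a fresh $D_{rp}$ genuinely enlarges the ``flippable'' zone until it exhausts the whole connected graph. Tracking the interplay between $\mathcal{T}$-iterated paths and the color-flipping step, and isolating when a multi-edge in $\mathcal{T}^z(\mathscr{G}_{\rm S})$ directly produces a $D$, will require careful bookkeeping. The necessity direction, by contrast, is essentially routine once the involution $\tau$ is identified.
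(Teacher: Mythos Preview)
Your proposal is correct. The sufficiency direction is essentially the paper's argument: Case~(i) is identical, your Case~(iii) path-composition/parity argument is a cleaner variant of the paper's inductive Lemma~\ref{su.2}, and your Case~(ii) is the paper's argument with the $\mathcal{T}$-closure step made slightly less explicit (the ``main obstacle'' you flag is not really one---once $\mathcal{T}$-closure gives you one of $B_{rp},C_{rp}$ for every $r$, a single flip against $D_{pq}$ gives the other, and then $[B_{rp},B_{ps}]$, $[B_{rp},C_{ps}]$ immediately produce every $B_{rs},C_{rs}$ without further induction on distance).

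Your necessity argument, however, is genuinely different from the paper's and considerably more conceptual. The paper proves by induction on $k$ that $\mathcal{T}^k(\mathscr{G}_{\rm S})$ never develops a multi-edge when $\mathscr{G}_{\rm S}$ has no odd-Red cycle, so the ascending chain stabilizes at a complete simple graph and $\{{\rm S}\}_{\rm LA}$ has dimension $\binom{n}{2}$. You instead invoke Harary's balance theorem to produce a bipartition $V_+\cup V_-$ and the involutive automorphism $\tau(X)=\overline{SXS^{-1}}$; every Blue or Red generator is $\tau$-fixed while every $D_{ij}$ is negated, so $\{{\rm S}\}_{\rm LA}\subseteq\mathrm{fix}(\tau)\subsetneq\su(n)$. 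This is shorter, avoids the somewhat delicate cycle-parity tracking through iterated closures, and exhibits the obstruction as a concrete symmetry (indeed $\mathrm{fix}(\tau)\cong\so(n)$, so it even identifies the proper subalgebra up to conjugacy). The paper's approach, on the other hand, stays entirely within the graph-combinatorial framework set up for the rest of the article and yields as a byproduct the structural fact that $\mathcal{T}^k$ preserves the no-odd-Red-cycle property, which is reused later in the proof of Lemma~\ref{lem6}.
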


Our subsequent study will be based on the following lemma:
\begin{lemma}\label{lem3}
The Lie brackets among the basis elements
of $\su(n)$ satisfy
\[[B_{ij},B_{kl}] = \delta_{jk}B_{il}+\delta_{il}B_{jk}+\delta_{jl}B_{ki}+\delta_{ik}B_{lj};\]
\[[C_{ij},C_{kl}] =\delta_{li}B_{kj}+\delta_{ki}B_{lj}+\delta_{lj}B_{ki}+\delta_{kj}B_{li};\]
\[[B_{ij},C_{kl}]=\delta_{jk}C_{il}+\delta_{jl}C_{ik}-\delta_{il}C_{kj}-\delta_{ik}C_{lj};\]
\[[B_{ij},D_{kl}]=(\delta_{jk}+\delta_{li}-\delta_{ki}-\delta_{jl})C_{ij};\]
\[[C_{ij},D_{kl}]=(\delta_{ki}+\delta_{jl}-\delta_{kj}-\delta_{il}) B_{ij};\]
and
\[[D_{ij},D_{kl}]= 0.\]
for all $1\leq i,j,k,l \leq n.$
\end{lemma}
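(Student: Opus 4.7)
The plan is to verify each of the six bracket identities by direct calculation, reducing everything to Lemma \ref{lem1} via bilinearity of the Lie bracket. Specifically, each $B_{ij}$, $C_{ij}$, $D_{ij}$ is a two-term linear combination of matrix units $E_{ab}$, so each bracket of basis elements expands into exactly four elementary brackets of the form $[E_{ab}, E_{cd}] = \delta_{bc}E_{ad} - \delta_{da}E_{cb}$. After expansion, I would collect the resulting matrix units back into $B$'s and $C$'s by recognizing the pattern $E_{ij} - E_{ji} = B_{ij}$ and $E_{ij} + E_{ji} = -\mathrm{i}\,C_{ij}$, keeping careful track of the factors of $\mathrm{i}$ contributed by $C$ and $D$.

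First I would dispose of identity (6): since $D_{ij} = \mathrm{i}(E_{ii} - E_{jj})$ is diagonal, $[D_{ij}, D_{kl}] = 0$ follows from $[E_{aa}, E_{bb}] = 0$ (an immediate consequence of Lemma \ref{lem1}). Next I would treat the three purely off-diagonal identities (1)--(3). For (1), expanding $[B_{ij}, B_{kl}] = [E_{ij} - E_{ji}, E_{kl} - E_{lk}]$ gives four elementary brackets; each produces at most one nonvanishing matrix unit, weighted by a single Kronecker delta, and pairing up the survivors by the antisymmetry $B_{ab} = -B_{ba}$ yields the four-term expression on the right-hand side. For (2), the same expansion applies but with an overall factor $\mathrm{i}^2 = -1$ (one $\mathrm{i}$ from each $C$), which is exactly the sign needed to convert the $\pm(E_{ab} - E_{ba})$ combinations into $+B$'s with the stated delta coefficients. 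For (3), a single factor of $\mathrm{i}$ survives and the surviving terms reassemble into $\pm C$'s.

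For identities (4) and (5), I would expand $D_{kl} = \mathrm{i}(E_{kk} - E_{ll})$ and compute $[E_{ab}, E_{kk}] = \delta_{bk}E_{ak} - \delta_{ka}E_{kb}$; each of the four bracket contributions either vanishes or contributes a matrix unit whose row and column indices are necessarily among $\{i,j\}$, so the whole sum collapses to a scalar multiple of $E_{ij} + E_{ji}$ in case (4) or $E_{ij} - E_{ji}$ in case (5). The scalar assembles into the combination $\delta_{jk} + \delta_{li} - \delta_{ki} - \delta_{jl}$ (resp.\ $\delta_{ki} + \delta_{jl} - \delta_{kj} - \delta_{il}$), and the remaining $\mathrm{i}$ from $D_{kl}$ turns $E_{ij} + E_{ji}$ into $C_{ij}$ in (4), while the $\mathrm{i}^2 = -1$ from $C_{ij}$ and $D_{kl}$ in (5) converts $-\mathrm{i}(E_{ij} - E_{ji})$ into $B_{ij}$.

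There is no conceptual difficulty here; the whole proof is a bookkeeping exercise. The main obstacle is organizational: the symmetries $B_{ij} = -B_{ji}$, $C_{ij} = C_{ji}$, $D_{ij} = -D_{ji}$ together with the two possible ways each elementary bracket can produce a nonvanishing term mean that each identity requires up to eight Kronecker-delta terms to be correctly combined and signs tracked through multiple appearances of $\mathrm{i}$. I would therefore present the computation for one representative identity in each class (one of (1)--(2), then (3), then (4), then (5)) in full detail and indicate that the remaining cases are obtained by the same procedure.
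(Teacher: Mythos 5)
Your proposal is correct and is exactly the direct expansion into matrix units via Lemma \ref{lem1} that the paper has in mind; the paper in fact omits the proof entirely, remarking only that ``it is easy to prove this lemma directly.'' Your bookkeeping (the sign conventions $E_{ij}+E_{ji}=-\mathrm{i}\,C_{ij}$ and the factors of $\mathrm{i}^2=-1$ in the $C$--$C$ and $C$--$D$ brackets) checks out against the stated identities, so the plan goes through without issue.
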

It is easy to prove this lemma directly, so we omit the proof.

Notice that, by Lemma \ref{lem3}, $ [B_{ij} , B_{kl}] \neq0$ if and only if there
exists a bridging index
\begin{equation}\label{eq6}
 j = k,~ i = l,~j = l,{~\rm or}~ i = k.
\end{equation}
 Likewise, $[C_{ij},C_{kl}]\neq 0, [B_{ij},C_{kl}]\neq0, [B_{ij},D_{kl}]\neq0$ and $[C_{ij},D_{kl}]\neq 0$ if and only if \eqref{eq6} holds. In particular, $[B_{ij},C_{ij}]=2D_{ij}$.

\subsubsection*{D.1 Proof of Sufficiency for Proposition \ref{su.1}}
Denote ${\rm S}={\rm S}_1\mcup{\rm S}_2\mcup{\rm S}_3$ with ${\rm S}_1\subseteq\mathpzc{B}$, ${\rm S}_2\subseteq\mathpzc{C}$ and ${\rm S}_3\subseteq\mathpzc{D}$. In this section, we show how the Lie algebra generated by ${\rm S}$ is equal to $\su(n)$ under conditions $(i),(ii)$ and $(iii)$ respectively.

(i) Suppose $\mathscr{G}_{\rm S}$ has edges of at least two colors, and all the Blue edges together with the vertex set $\mathrm{V}$ form a connected graph. It is clear that the edge generated subgraph obtained from all Blue edges is the graph $\mathscr{G}_{{\rm S}_1}$. From Lemma \ref{so.1}, we conclude that all the elements in $\mathpzc{B}$ can be generated by iterated Lie brackets of elements in ${\rm S}_1$, i.e., $\{{\rm S}_1\}_{\rm LA}=\so(n)$. Also $\mathscr{G}_{\rm S}$ has edges of at least two colors, i.e., ${\rm S}_2\mcup{\rm S}_3\neq\emptyset$. Note that $[B_{ij},C_{ij}]=2D_{ij}$. We only need to show that if ${\rm S}_3\neq\emptyset$, then $\{{\rm S}_1\mcup{\rm S}_3\}_{\rm LA}=\su(n)$.

There is no loss of generality in assuming $D_{12}\in{\rm S}_3$. Notice that $\mathpzc{B}\subset\{{\rm S}_1\}_{\rm LA}$. By Lemma \ref{lem3}, we have
$$[B_{12},D_{12}]=-2C_{12},~~[B_{1j},D_{12}]=-C_{1j},~ j=3,\dots,n.$$
Then it is evident that
$$\mathpzc{C}_1:=\{C_{1j}:2\leq j\leq n\}\subset\{{\rm S}_1\mcup{\rm S}_3\}_{\rm LA}.$$
In addition, $[B_{2j},D_{12}]=C_{2j}$ for $j=3,\dots,n$. This clearly forces
$$\mathpzc{C}_2:=\{C_{2j}:3\leq j\leq n\}\subset\{{\rm S}_1\mcup{\rm S}_3\}_{\rm LA}.$$
Lemma \ref{lem3} now gives $D_{23}\in\{{\rm S}_1\mcup{\rm S}_3\}_{\rm LA}$ since $[B_{23},C_{23}]=2D_{23}$. In the same manner, we can see that
$$\mathpzc{C}_3:=\{C_{3j}:4\leq j \leq n\}\subset\{{\rm S}_1\mcup{\rm S}_3\}_{\rm LA}.$$
We continue in this fashion obtaining $\mathpzc{C}_i:=\{C_{ij}:i+1\leq j \leq n\}\subset\{{\rm S}_1\mcup{\rm S}_3\}_{\rm LA}$ for $i=4,\dots,n-1$. Therefore $\mathpzc{C}:=\mcup_{i=1}^{n-1}\mathpzc{C}_i\subset\{{\rm S}_1\mcup{\rm S}_3\}_{\rm LA}.$ Immediately there holds $\mathpzc{D}\subset\{{\rm S}_1\mcup{\rm S}_3\}_{\rm LA}$, and consequently, we have $\{{\rm S}_1\mcup{\rm S}_3\}_{\rm LA}=\su(n)$.

(ii) Let $\mathscr{G}_{\rm S}$ be a connected graph with self-loops. Evidently, the subgraph $\mathscr{G}_{{\rm S}_1\mcup{\rm S}_2}:=\big(\mathrm{V},\mathscr{E}_{{\rm S}_1\mcup{\rm S}_2}\big)$ of $\mathscr{G}_{\rm S}$ is connected, and ${\rm S}_3\neq\emptyset$. We can certainly assume that $D_{12}\in {\rm S}_3$. Based on (i), if we prove that the Lie algebra $\{{\rm S}\}_{\rm LA}$ contains $\so(n)$ as a Lie subalgebra, the statement $\{{\rm S}\}_{\rm LA}=\su(n)$ follows.

It is worth noting from Lemma \ref{lem3} and Definition \ref{map.1} that taking first-order Lie brackets of the elements in
${\rm S}_1\mcup{\rm S}_2$ corresponds to adding edges that connect the endpoints of incident edges in $\mathscr{G}_{{\rm S}_1\mcup{\rm S}_2}$. As $\mathscr{G}_{{\rm S}_1\mcup{\rm S}_2}$ is connected, we conclude that there exists an integer $z$ such that $K_n$ is a spanning subgraph of $\mathcal{T}^z\big(\mathscr{G}_{\mathrm{S}_1\mcup\mathrm{S}_2}\big)$. In other words, for any $1\leq i<j\leq n$, $B_{ij}\in\{{\rm S}_1\mcup{\rm S}_2\}_{\rm LA}$ or $C_{ij}\in\{{\rm S}_1\mcup{\rm S}_2\}_{\rm LA}$.

Fix $i=1$. There are two cases: (a) if for all $2\leq j\leq n$, $B_{1j}\in\{{\rm S}_1\mcup{\rm S}_2\}_{\rm LA}$, then $\ell\big(\{B_{12},\dots,B_{1n}\}\big)$ is connected and thus $\{B_{12},\dots,B_{1n}\}_{\rm LA}=\so(n)$. It follows that $\so(n)\subset \{{\rm S}_1\mcup{\rm S}_2\}_{\rm LA}$; (b) if there is a $2\leq j^*\leq n$ such that $B_{1j^*}\notin\{{\rm S}_1\mcup{\rm S}_2\}_{\rm LA}$, then $C_{1j^*}$ must be in $\{{\rm S}_1\mcup{\rm S}_2\}_{\rm LA}$. Since $D_{12}\in{\rm S}_3$,
Lemma \ref{lem3} shows that $B_{1j^*}\in\{{\rm S}_1\mcup{\rm S}_2\mcup{\rm S}_3\}_{\rm LA}$, by taking Lie bracket of $C_{1j^*}$
and $D_{12}$. Hence, for any $2\leq j\leq n$, $B_{1j}\in\{{\rm S}_1\mcup{\rm S}_2\mcup{\rm S}_3\}_{\rm LA}$. This clearly forces $\so(n)\subset\{{\rm S}_1\mcup{\rm S}_2\mcup{\rm S}_3\}_{\rm LA}$.

In conclusion, the Lie algebra generated by ${\rm S}$ contains $\so(n)$ as a Lie subalgebra. Furthermore, with ${\rm S}_3\neq\emptyset$, we have $\{{\rm S}\}_{\rm LA}=\su(n)$.

(iii) Let $\mathscr{G}_{\rm S}$ be a connected graph with a cycle containing an odd number of Red edges. By (ii) it is immediate that if there are self-loops in $\mathscr{G}_{\rm S}$, then $\{{\rm S}\}_{\rm LA}=\su(n)$. Thus, we focus on proving $\{{\rm S}\}_{\rm LA}=\su(n)$ for $\mathscr{G}_{\rm S}$ without self-loops, i.e., ${\rm S}_3=\emptyset$.
Let ${\rm P}=\big(\{v_1,v_2\},\{v_2,v_3\},\dots,\{v_l,v_{l+1}=v_1\}\big)$ denote the cycle containing an odd number of Red edges. The set of matrices corresponding to these edges  in ${\rm P}$ will be denoted by ${\rm S_P}$. The proof is based on the following lemma.
\begin{lemma}\label{su.2}
The Lie algebra generated by ${\rm S_P}$ contains at least one element in $\mathpzc{D}$.
\end{lemma}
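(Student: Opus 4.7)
The plan is to build an element of $\mathpzc{D}$ as a single iterated Lie bracket taken along the cycle $\rm P$. For each $k\in\{1,\dots,l\}$, let $\mathsf{e}_k\in {\rm S_P}$ denote the basis matrix attached to the edge $\{v_k,v_{k+1}\}$, so $\mathsf{e}_k$ lies in $\{\pm B_{v_k v_{k+1}},\pm C_{v_k v_{k+1}}\}$ (the sign only reflects the $i<j$ convention used for the standard bases $\mathpzc{B}$ and $\mathpzc{C}$). I would then set recursively $X_2:=[\mathsf{e}_1,\mathsf{e}_2]$ and $X_k:=[X_{k-1},\mathsf{e}_k]$ for $3\le k\le l$; every $X_k$ lies in $\{{\rm S_P}\}_{\rm LA}$ by construction.

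The core step is an induction on $k$ establishing that for $2\le k\le l-1$, the element $X_k$ equals $\pm B_{v_1 v_{k+1}}$ if the number of Red edges among $\mathsf{e}_1,\dots,\mathsf{e}_k$ is even, and $\pm C_{v_1 v_{k+1}}$ if it is odd. Inspecting Lemma \ref{lem3}, each of the four bracket rules $[B,B],[C,C]\mapsto \pm B$ and $[B,C],[C,B]\mapsto \pm C$ collapses to the single surviving Kronecker term with indices $(v_1,v_{k+1})$, and the distinctness of the cycle vertices $v_1,\dots,v_l$ guarantees that this term is genuinely nonzero at every intermediate step. The parity of Red edges behaves additively modulo $2$, so it tracks the four cases exactly and the induction closes smoothly.

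For the final step, consider $X_l=[X_{l-1},\mathsf{e}_l]$, where $\mathsf{e}_l$ is incident on $v_l$ and $v_1$. The assumption that $\rm P$ contains an odd number of Red edges forces the color of $X_{l-1}$ and the color of $\mathsf{e}_l$ to differ. Combining $B_{ji}=-B_{ij}$, $C_{ji}=C_{ij}$, and the identity $[B_{ij},C_{ij}]=2D_{ij}$ from Lemma \ref{lem3}, the bracket evaluates to $X_l=\pm 2 D_{v_1 v_l}$, a nonzero element of $\mathpzc{D}$ lying inside $\{{\rm S_P}\}_{\rm LA}$, as required.

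The principal obstacle is not conceptual but bookkeeping: one must carefully propagate the signs introduced by the $i<j$ convention through every bracket along the cycle, and verify at each inductive step that exactly one Kronecker term in Lemma \ref{lem3} survives (this is precisely where the distinctness of the cycle's vertices is used). These checks are routine, but they must be carried out with care so that none of the intermediate elements $X_k$ collapse to zero before the closing step produces $\pm 2 D_{v_1 v_l}$.
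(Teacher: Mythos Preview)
Your proposal is correct and follows essentially the same idea as the paper's proof: both bracket successively along the cycle, using the rules $[B,B],[C,C]\mapsto\pm B$ and $[B,C],[C,B]\mapsto\pm C$ to track the parity of Red edges, and close with $[B_{ij},C_{ij}]=2D_{ij}$. The only difference is presentational---the paper phrases this as an induction on the cycle length $l$ that shrinks ${\rm S_P}$ by one element at each step, whereas you unroll the same recursion into a single iterated bracket $X_l$; the underlying computations and the use of vertex distinctness are identical.
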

\begin{proof}
Recall that $[B_{ij},C_{ij}]=2D_{ij}, [B_{ij},B_{jk}]=B_{ik}, [C_{ij},C_{jk}]=-B_{ik}$ and $ [C_{ij},B_{jk}]=C_{ik}$. The proof is by induction on $l$, i.e., the cardinality of set ${\rm S_P}$. We divide our proof in three steps.

\noindent{\it Step 1.} Notice that the cardinality of set ${\rm S_P}\mcap\mathpzc{C}$ is an odd number. If $l=2$, then ${\rm S_P}=\{B_{v_1v_2},C_{v_1v_2}\}$. It follows easily that $D_{v_1v_2}\in\{{\rm S_P}\}_{\rm LA}$.

\noindent{\it Step 2.} Let $l=3$. There are two possibilities for elements in set ${\rm S_P}$:
i) ${\rm S_P}=\{B_{v_1v_2},B_{v_2v_3},C_{v_1v_3}\}$; ii) ${\rm S_P}=\{C_{v_1v_2},C_{v_2v_3},C_{v_1v_3}\}$. In either case, we have $B_{v_1v_3}\in\{{\rm S_P}\}_{\rm LA}$, which implies $D_{v_1v_3}\in\{{\rm S_P}\}_{\rm LA}$. Now we assume

\noindent{\it \textbf{Induction Hypothesis}}. For $l\leq K-1$ with $K>2$, the conclusion holds.

\noindent{\it Step 3.} Let $l=K$ and $|{\rm S_P}\mcap\mathpzc{C}|=q$. Since $q$ is odd, $q\geq 1$. We can assume that $C_{v_1v_2}\in {\rm S_P}$. For $\{v_2,v_3\}\in {\rm P}$, there are two possibilities: i) $B_{v_2v_3}\in{\rm S_P}$; ii) $C_{v_2v_3}\in{\rm S_P}$.
\begin{itemize}
  \item Proof under Case (i): if $B_{v_2v_3}\in{\rm S_P}$, then $C_{v_1v_3}\in\{{\rm S_P}\}_{\rm LA}$. Define ${\rm S_{P'}}:=\big({\rm S_P}\setminus \{C_{v_1v_2},B_{v_2v_3}\}\big)\mcup\{C_{v_1v_3}\}$ with ${\rm P'}=\big(\{v_1,v_3\},\dots,\{v_l,v_1\}\big)$. We must have $|{\rm S_{P'}}|=K-1$ and $|{\rm S_{P'}}\mcap\mathpzc{C}|=q$. By our induction hypothesis, the Lie algebra generated by ${\rm S_{P'}}$ contains at least one element in $\mathpzc{D}$. In addition, ${\rm S_{P'}}\subset\{{\rm S_P}\}_{\rm LA}$. The desired result follows straightforwardly.
  \item Proof of Case (ii): if $C_{v_2v_3}\in{\rm S_P}$, then $B_{v_1v_3}\in\{{\rm S_P}\}_{\rm LA}$ and $q\geq 3$.  Define ${\rm S_{P'}}:=\big({\rm S_P}\setminus \{C_{v_1v_2},C_{v_2v_3}\}\big)\mcup\{B_{v_1v_3}\}$ with ${\rm P'}=\big(\{v_1,v_3\},\dots,\{v_l,v_1\}\big)$. We must have $|{\rm S_{P'}}|=K-1$ and $|{\rm S_{P'}}\mcap\mathpzc{C}|=q-2$.
      Of course, $q-2$ is odd. The rest of the proof proceeds as in Case (i).
\end{itemize}
This proves the desired lemma.
\end{proof}
As in the proof of (ii), Lemma \ref{su.2} gives $\{{\rm S}\}_{\rm LA}=\su(n)$. We have thus proved the sufficiency.
\subsubsection*{D.2 Proof of Necessity  for Proposition \ref{su.1}}
Suppose the conditions in Proposition \ref{su.1} are not true. Then for the graph $\mathscr{G}_{\rm S}$, there are three possibilities: i) $\mathscr{G}_{\rm S}$ is not connected; ii) $\mathscr{G}_{\rm S}$ is a connected graph containing only Blue edges; iii) $\mathscr{G}_{\rm S}$ is a connected graph without self-loops, and there are no cycles with an odd number of Red edges. It is easy to check that when $\mathscr{G}_{\rm S}$ is not connected, the generated Lie algebra by ${\rm S}$ is not equal to $\su(n)$. If $\mathscr{G}_{\rm S}$ is a connected graph containing only Blue edges, then $\{{\rm S}\}_{\rm LA}=\so(n)$. Now let $\mathscr{G}_{\rm S}$ be a connected graph without self-loops, and there are no cycles with an odd number of Red edges. The proof is completed by showing that $\{{\rm S}\}_{\rm LA}\neq\su(n)$.

First, we establish the relationship between Lie bracket operations and transitive closure of the edge-colored multigraph $\mathscr{G}_{\rm S}$. Since ${\rm S}_3=\emptyset$, then ${\rm S}={\rm S_1}\mcup{\rm S_2}$. Let $\mathrm {S}^0\subseteq \mathrm {S}^1 \subseteq \mathrm {S}^2\cdots$ be an ascending chain of subsets of $\mathpzc{B}\mcup\mathpzc{C}$ such that $\mathrm {S}^0=\mathrm {S}, \mathrm {S}^1 = [\mathrm {S}^0,\mathrm {S}^0]\mcup \mathrm {S}^0, \dots, \mathrm {S}^{k+1} = [\mathrm {S}^k, \mathrm {S}^k]\mcup \mathrm {S}^k,\dots.$
where $[\mathrm {S}^k, \mathrm {S}^k] = \big\{[A,B]: A,B \in \mathrm {S}^k\big\}$. The following lemma holds.
\begin{lemma}
 $\mathcal{T}^{k}(\mathscr {G}_{\mathrm {S}})=\mathscr {G}_{\mathrm {S}^k}$ holds for all $k= 0, 1,\dots$.
\end{lemma}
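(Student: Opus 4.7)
The plan is to proceed by induction on $k$, with the base case $k=0$ being immediate from the definitions $\mathcal{T}^0(\mathscr{G}) := \mathscr{G}$ and $\mathrm{S}^0 := \mathrm{S}$, which give $\mathcal{T}^0(\mathscr{G}_\mathrm{S}) = \mathscr{G}_\mathrm{S} = \mathscr{G}_{\mathrm{S}^0}$. For the inductive step, assuming $\mathcal{T}^k(\mathscr{G}_\mathrm{S}) = \mathscr{G}_{\mathrm{S}^k}$, I would use the recursive definitions to write
\[
\mathcal{T}^{k+1}(\mathscr{G}_\mathrm{S}) = \mathcal{T}(\mathscr{G}_{\mathrm{S}^k}) \quad\text{and}\quad \mathscr{G}_{\mathrm{S}^{k+1}} = \mathscr{G}_{[\mathrm{S}^k,\mathrm{S}^k]\mcup\mathrm{S}^k},
\]
so the whole task reduces to showing the one-step identity $\mathcal{T}(\mathscr{G}_{\mathrm{T}}) = \mathscr{G}_{[\mathrm{T},\mathrm{T}]\mcup\mathrm{T}}$ for any $\mathrm{T}\subseteq\mathpzc{B}\mcup\mathpzc{C}$.

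To establish this one-step identity, I would split $\mathrm{T}$ into its $\mathpzc{B}$-part $\mathrm{T}_1$ and $\mathpzc{C}$-part $\mathrm{T}_2$ and compute $[\mathrm{T},\mathrm{T}]$ by the three cases in Lemma \ref{lem3}. The bracket $[B_{ij},B_{kl}]$ is nonzero precisely when the index sets $\{i,j\}$ and $\{k,l\}$ share exactly one element $j=k$ (the other overlap cases being identified by the antisymmetry $B_{ij}=-B_{ji}$), and the result is $\pm B_{i'l'}$ where $\{i',l'\}$ are the non-shared endpoints; this is exactly the Blue-edge rule defining $\mathscr{E}_1$. Similarly, $[C_{ij},C_{kl}]$ yields a $\pm B_{i'l'}$ whenever the supports share a single vertex, matching $\mathscr{E}_2$; and $[B_{ij},C_{kl}]$ yields a $\pm C_{i'l'}$ with $i'\neq l'$ whenever a bridging index exists, matching $\mathscr{E}_3$. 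So each newly added edge in $\mathcal{T}(\mathscr{G}_\mathrm{T})$ corresponds to a nonzero Lie bracket of two elements in $\mathrm{T}$, and conversely every nonzero bracket produces a basis element whose associated edge sits in one of $\mathscr{E}_1,\mathscr{E}_2,\mathscr{E}_3$. This gives the required set equality of edge sets, hence of graphs.

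The one delicate point, which I expect to be the main obstacle, is the special-overlap case $(i,j)=(k,l)$ (i.e.\ a Blue/Red multi-edge on the same pair of nodes), where $[B_{ij},C_{ij}]=2D_{ij}$ would produce a $\mathpzc{D}$-element lying outside $\mathpzc{B}\mcup\mathpzc{C}$; the mapping $\mathcal{T}$ excludes the $i=k$ case in $\mathscr{E}_3$ precisely to avoid this. I would therefore observe that the lemma is invoked under the hypotheses of the necessity proof of Proposition \ref{su.1}, where $\mathscr{G}_\mathrm{S}$ has no self-loops and no cycles with an odd number of Red edges; in particular $\mathrm{T}=\mathrm{S}$ contains no Blue/Red multi-edge. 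I would then verify inductively that the forbidden-cycle property is preserved by $\mathcal{T}$: any multi-edge newly created by $\mathcal{T}(\mathscr{G}_{\mathrm{S}^k})$ would close a length-two cycle with exactly one Red edge, which together with the path of length $\le 2$ that generated it yields a short cycle in $\mathscr{G}_{\mathrm{S}^k}$ whose Red-edge parity is odd, contradicting the invariant. Consequently $\mathrm{S}^k\subseteq\mathpzc{B}\mcup\mathpzc{C}$ for every $k$, no $\mathpzc{D}$-element is ever generated, and the bracket formulas from Lemma \ref{lem3} line up exactly with the three edge-addition rules defining $\mathcal{T}$, completing the inductive step and hence the proof.
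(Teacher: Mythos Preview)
Your overall strategy matches the paper's: induction on $k$, reduce to the one-step identity $\mathcal{T}(\mathscr{G}_{\mathrm{T}})=\mathscr{G}_{[\mathrm{T},\mathrm{T}]\cup\mathrm{T}}$ via the bracket formulas in Lemma~\ref{lem3}, and carry along the invariant ``no cycle with an odd number of Red edges'' (which in particular rules out the Blue/Red multi-edge producing a $D$-element). The identification of the $[B_{ij},C_{ij}]=2D_{ij}$ case as the sole obstruction, and the recognition that the ambient hypothesis of the necessity proof is what saves you, are exactly right.

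There is, however, a genuine gap in how you close the induction. You argue that a newly created \emph{multi-edge} in $\mathcal{T}(\mathscr{G}_{\mathrm{S}^k})$ would unwind to a short odd-Red cycle in $\mathscr{G}_{\mathrm{S}^k}$, contradicting the invariant at step $k$. This is correct, but it only shows that $\mathscr{G}_{\mathrm{S}^{k+1}}$ has no multi-edges; it does not show that $\mathscr{G}_{\mathrm{S}^{k+1}}$ has no odd-Red cycle of length $\ge 3$. Without the full invariant at step $k+1$, your multi-edge argument cannot be reapplied at step $k+2$, because that argument uses ``no odd-Red cycle in $\mathscr{G}_{\mathrm{S}^{k+1}}$'' as its hypothesis, not merely ``no multi-edge in $\mathscr{G}_{\mathrm{S}^{k+1}}$''. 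So the induction does not go through as stated.

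The paper closes this gap by proving the full invariant is preserved: given \emph{any} cycle $P$ in $\mathscr{G}_{\mathrm{S}^{k+1}}$, replace each edge of $P$ not already in $\mathscr{G}_{\mathrm{S}^k}$ by the length-$2$ path in $\mathscr{G}_{\mathrm{S}^k}$ that generated it. A new Blue edge is replaced by Blue--Blue or Red--Red, a new Red edge by Red--Blue; in every case the parity of the Red-edge count is unchanged. Iterating yields a closed walk entirely in $\mathscr{G}_{\mathrm{S}^k}$ with the same Red parity as $P$, so if $P$ were odd-Red some cycle in $\mathscr{G}_{\mathrm{S}^k}$ would be too, contradicting the invariant. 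This unwinding argument for arbitrary cycles, not just length-$2$ ones, is the missing piece in your proposal.
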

\begin{proof}
It is evident that $\mathcal{T}^{0}(\mathscr {G}_{\mathrm {S}})=\mathscr{G}_{\mathrm {S}^0}$. Thus, in the rest of the proof, we focus on proving  $\mathcal{T}^{k}(\mathscr{G}_{\mathrm {S}})=\mathscr{G}_{\mathrm {S}^k}$ for $k\geq 1$. By the definition of $\mathcal{T}(\cdot)$
and ${\rm S}^k$, we conclude that $\mathcal{T}(\mathscr{G}_{\mathrm {S}^{k-1}})=\mathscr{G}_{\mathrm {S}^k}$ if and only if $\mathscr{G}_{\mathrm {S}^{k-1}}$ has no multiple edges. From now on, the notation $\mathscr{G}\rhd \mathcal {R}$ means that $\mathscr{G}$ does not have a cycle with an odd number of Red edges.

Recall that $\mathscr{G}_{\mathrm {S}}\rhd \mathcal {R}$. This implies that $\mathscr{G}_{\mathrm {S}}$ has no multiple edges, and thus $\mathcal{T}^{1}(\mathscr{G}_{\mathrm {S}})=\mathscr{G}_{\mathrm {S}^1}$. It can be concluded that $\mathscr{G}_{\mathrm {S}^1}$ has no multiple edges. If the assertion was false, then there must exist a cycle of length 3 containing an odd number of Red edges in the graph $\mathscr{G}_{\mathrm {S}}$. This leads to a contradiction. Therefore, $\mathcal{T}(\mathscr{G}_{\mathrm {S}^1})=\mathscr{G}_{\mathrm {S}^2}$, and, together with $\mathscr{G}_{\mathrm {S}^1}=\mathcal{T}^{1}(\mathscr{G}_{\mathrm {S}})$, we have $\mathcal{T}^{2}(\mathscr{G}_{\mathrm {S}})=\mathscr{G}_{\mathrm {S}^2}$.

Our task now is to prove that $\mathscr{G}_{\mathrm {S}^2}$ has no multiple edges. This result will be proved if we can show that $\mathscr{G}_{\mathrm {S}^1}\rhd \mathcal {R}$. To this end, consider $\mathscr{G}_{\mathrm {S}}$. There are two possibilities for $\mathscr{G}_{\mathrm {S}}$: i) Each cycle in $\mathscr{G}_{\mathrm {S}}$ contains an even number of Red edges (We adopt the convention that $0$ is even); ii) $\mathscr{G}_{\mathrm {S}}$ has no cycles.
\begin{itemize}
  \item Proof under Case (i): Let each cycle in $\mathscr{G}_{\mathrm {S}}$ contain an even number of Red edges. Note that $\mathscr{G}_{\mathrm {S}^1}=\mathcal{T}(\mathscr{G}_{\mathrm {S}})$. We only need to consider the newly generated cycles in $\mathscr{G}_{\mathrm {S}^1}$ compared to $\mathscr{G}_{\mathrm {S}}$. Let ${\rm P}=\big(\{v_1,v_2\},\{v_2,v_3\},\dots,\{v_l,v_1\}\big)$ be a cycle of $\mathscr{G}_{\mathrm {S}^1}$. Let us first assume that $\{v_1,v_2\}\in\mathscr{G}_{\mathrm {S}^1}, \{v_1,v_2\}\notin\mathscr{G}_{\mathrm {S}}$. For the colour of edge $\{v_1,v_2\}$, there are two possibilities (i) if  $\{v_1,v_2; {\rm Blue}\}\in\mathscr{G}_{\mathrm {S}^1}$, then there must exist a node $v_*$ such that $\{v_1,v_*; {\rm Blue}\}$ and $\{v_*,v_2; {\rm Blue}\}$ in $\mathscr{G}_{\mathrm {S}}$, or, $\{v_1,v_*; {\rm Red}\}$ and $\{v_*,v_2; {\rm Red}\}$ in $\mathscr{G}_{\mathrm {S}}$. Let ${\rm P'}=\big(\{v_1,v_*\},\{v_*,v_2\},\{v_2,v_3\},\dots,\{v_l,v_1\}\big)$. It is easily seen that the number of Red edges contained in ${\rm P'}$ and  ${\rm P}$ has the same parity; (ii) if $\{v_1,v_2; {\rm Red}\}\in\mathscr{G}_{\mathrm {S}^1}$, then there must exist a node $v_*$ such that $\{v_1,v_*; {\rm Red}\}$ and $\{v_*,v_2; {\rm Blue}\}$ in $\mathscr{G}_{\mathrm {S}}$. We see at once that ${\rm P'}$ and  ${\rm P}$ contain the same number of Red edges.

      Therefore, the parity of the number of Red edges contained in ${\rm P'}$ and ${\rm P}$ is always the same. It follows that the number of Red edges in ${\rm P}$ is even, if all edges in ${\rm P'}$ are in $\mathscr{G}_{\mathrm {S}}$. If not, we can use the above method to replace the edges of $\mathscr{G}_{\mathrm {S}^1}$ in the circle ${\rm P'}$ with the edges of $\mathscr{G}_{\mathrm {S}}$, until all the edges in the obtained cycle ${\rm P^*}$ are in $\mathscr{G}_{\mathrm {S}}$. Furthermore, the Red edges contained in ${\rm P}$ and ${\rm P^*}$ have the same parity. Then it can be concluded that the number of Red edges in ${\rm P}$ is even.

      On account of the arbitrariness of ${\rm P}$, we have $\mathscr{G}_{\mathrm {S}^1}\rhd \mathcal {R}$.

  \item Proof of Case (ii): if $\mathscr{G}_{\mathrm {S}}$ has no cycles, then the edge set of graph $\mathcal{T}(\mathscr{G}_{\mathrm {S}})$ is the union of triangles like $\big\{\{v_i,v_j;{\rm Blue}\},\{v_j,v_k;{\rm Blue}\},\{v_k,v_i;{\rm Blue}\}\big\}$ and $\big\{\{v_i,v_j;{\rm Red}\},\{v_j,v_k;{\rm Red}\},\{v_k,v_i;{\rm Blue}\}\big\}$. It can be easily verified that $\mathcal{T}(\mathscr{G}_{\mathrm {S}})\rhd \mathcal {R}$. This gives $\mathscr{G}_{\mathrm {S}^1}\rhd \mathcal {R}$.

\end{itemize}
We have thus proved $\mathcal{T}(\mathscr{G}_{\mathrm {S}})=\mathscr{G}_{\mathrm {S}^{1}}\rhd \mathcal {R}$. In fact, the proof above gives more, namely $\mathcal{T}^{k}(\mathscr{G}_{\mathrm {S}})\rhd \mathcal {R}$ holds for $k=0,1,2,...$. This certainly guarantees $\mathcal{T}^{k}(\mathscr{G}_{\mathrm {S}})$ has no multiple edges for all $k\geq1$, and, in consequence, the proof is completed by an easy induction.
\end{proof}

We are now in a position to show that $\{{\rm S}\}_{\rm LA}\neq\su(n)$. Note that $\mathscr{G}_{\mathrm {S}}$ has finitely many vertices and edges, the
ascending chain of graphs $\mathscr{G}_{\mathrm {S}}\subseteq\mathscr{G}_{\mathrm {S}^1}\subseteq\cdots$ stabilizes in finite steps. That is, there exists a nonnegative integer $z$ such that $\mathscr{G}_{\mathrm {S}^z}=\mathscr{G}_{\mathrm {S}^{z+1}}=\cdots$. In addition, $\mathscr{G}_{\mathrm {S}}$ is connected, and $\mathscr{G}_{\mathrm {S}^k}$ has no multiple edges for all $k\geq1$. It follows that $\mathscr{G}_{\mathrm {S}^z}$ is a complete graph with no multiple edges. From the definition of ${\rm S}^k$, we deduce that $\mathrm {S}^z$ spans $\{{\rm S}\}_{\rm LA}$.  Hence, $\{{\rm S}\}_{\rm LA}$ is a $n(n-1)/2$-dimensional subalgebra of $\su(n)$, and consequently  $\{{\rm S}\}_{\rm LA}\neq\su(n)$. We have now completed the proof of Proposition \ref{su.1}.

\subsection*{E. Proof of Theorem \ref{thm4}}

For any $A\in\su(n)$, we can represent it in the form of
$$
A=\sum_{k=1}^{l_1} a_{k} B_{{i}_k{j}_k}+\sum_{k=l_1+1}^{l_2} a_{k} C_{{i}_k{j}_k}+\sum_{k=l_2+1}^{l_3} a_{k} D_{{i}_k{j}_k}
$$
where $a_k\neq 0\in\mathbb{R}$, $i_k,j_k\in\mathrm{V}$, and $B_{{i}_k{j}_k}\in\mathpzc{B}, C_{{i}_k{j}_k}\in\mathpzc{C}, D_{{i}_k{j}_k}\in\mathpzc{D}$. Define the map $\psi$ that takes a matrix $A\in\su(n)$ to an edge-colored multigraph $\mathscr{G}_{A}:=(\mathrm{V},\mathscr{E}_{A})$ with $\mathscr{E}_{A}=\mathscr{E}_{\rm Blue}\mcup\mathscr{E}_{\rm Red}\mcup\mathscr{E}_{\rm Green}$, where $\mathscr{E}_{\rm Blue}=\big\{\{i_1,j_1;{\rm Blue}\},\dots,\{i_{l_1},j_{l_1};{\rm Blue}\}\big\}$, $\mathscr{E}_{\rm Red}=\big\{\{i_{l_1+1},j_{l_1+1};{\rm Red}\},\dots,\{i_{l_2},j_{l_2};{\rm Red}\}\big\}$, and $\mathscr{E}_{\rm Green}=\big\{\{i_{l_2+1},i_{l_2+1};{\rm Green}\},\dots,\{i_{l_3},i_{l_3};{\rm Green}\}\big\}$.

\begin{definition}\label{map.2}
Let $\mathrm{G}=(\mathrm{V},\mathrm{E})$ be an undirected graph. Given any node pair $(i,j)$ of $\mathrm{G}$, the graph
$\mathcal{H}_{ij}(\mathrm{G}) := (\mathrm{V},\mathrm{E}_{ij})$ is called the circumjacent closure at node pair $(i,j)$ of $\mathrm{G}$ with
$\mathrm{E}_{ij}=\mathrm{E}_{ij}^{1}\mcup\mathrm{E}_{ij}^{2}$, where
\begin{align*}
\mathrm{E}_{ij}^{1}&= \big\{\{i,k\}\,:\,\exists k \ {\rm s.t. }\ \{j,k\}\in \mathrm{E}\big\}, \quad\\
&~~~~ \mathrm{E}_{ij}^{2}= \big\{\{j,k\}\,:\,\exists k \ {\rm s.t. }\ \{i,k\}\in \mathrm{E}\big\}.
\end{align*}
\end{definition}

\begin{lemma}\label{lem4}
 Suppose the graph $\mathrm{G}=(\mathrm{X\mcup Y},\mathrm{E})$ is a bi-graph with $|\mathrm{X}|\geq3,|\mathrm{Y}|\geq3$ and $|\mathrm{E}|\geq1$. Then there exists  a finite sequence of node pairs $(i_1,j_1),\dots,(i_z,j_z)$ for some integer $z\geq 1$ such that

$(i)$ Either $i_{s},j_{s}\in \mathrm{X}$ or $i_{s},j_{s}\in \mathrm{Y}$ for $s=1,2,\dots,z$;

$(ii)$ $\mathcal{H}_{i_{z}j_{z}}\big(\cdots\mathcal{H}_{i_{2}j_{2}}\big(\mathcal{H}_{i_{1}j_{1}}(\mathrm{G})\big)\big):=\big(\mathrm{X\mcup Y},\mathrm{E}_{i_{z}j_{z}}\big)$ is also a bi-graph with $|\mathrm{E}_{i_{z}j_{z}}|=1$.
\end{lemma}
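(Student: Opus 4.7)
The plan is to reduce the graph to a single edge in at most four operations, first by concentrating all edges onto a $2\times 2$ bipartite block straddling a chosen edge of $\mathrm{G}$, and then by peeling off the surplus edges with the aid of isolated vertices on each side. Throughout the construction each operation will be indexed by a node pair contained entirely in either $\mathrm{X}$ or $\mathrm{Y}$; since the image of an edge $\{i,k\}$ under $\mathcal{H}_{ij}$ with $i,j$ on the same side is of the form $\{j,k\}$ (respectively $\{i,k\}$) with $k$ on the opposite side, the bi-graph structure with respect to the partition $\mathrm{X}\mcup\mathrm{Y}$ is automatically preserved at every step.

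First, I would select any edge $\{x^{\ast},y^{\ast}\}\in\mathrm{E}$ together with arbitrary $x_{2}\in\mathrm{X}\setminus\{x^{\ast}\}$ and $y_{2}\in\mathrm{Y}\setminus\{y^{\ast}\}$, and apply $\mathcal{H}_{x^{\ast}x_{2}}$; by definition all surviving edges are incident to $x^{\ast}$ or $x_{2}$, and $\{x_{2},y^{\ast}\}$ is present because $y^{\ast}$ was a neighbor of $x^{\ast}$. Next I would apply $\mathcal{H}_{y^{\ast}y_{2}}$; all surviving edges are now additionally required to be incident to $y^{\ast}$ or $y_{2}$, so the edge set lies within the $2\times 2$ block $\{x^{\ast},x_{2}\}\times\{y^{\ast},y_{2}\}$, and tracking $\{x_{2},y^{\ast}\}$ through this second operation shows that $\{x_{2},y_{2}\}$ is guaranteed to be present. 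If the resulting graph has exactly one edge, we stop with $z=2$.

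Otherwise, I would choose $x_{3}\in\mathrm{X}\setminus\{x^{\ast},x_{2}\}$, which is available since $|\mathrm{X}|\geq 3$ and which is isolated in the current graph, and apply $\mathcal{H}_{x_{2}x_{3}}$; the result has edges only of the form $\{x_{3},y^{\ast}\}$ or $\{x_{3},y_{2}\}$, with $\{x_{3},y_{2}\}$ guaranteed by the previous step. If only one of these survives we stop with $z=3$; otherwise I would pick $y_{3}\in\mathrm{Y}\setminus\{y^{\ast},y_{2}\}$ (available since $|\mathrm{Y}|\geq 3$) and apply $\mathcal{H}_{y_{2}y_{3}}$, after which the unique remaining edge is $\{x_{3},y_{3}\}$, giving $z=4$. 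The main point to check carefully, and the only place where things could go wrong, is the bookkeeping that the pivot edges $\{x_{2},y_{2}\}$ and subsequently $\{x_{3},y_{2}\}$ always survive the operations through which they are tracked; this is what prevents the edge set from accidentally collapsing to empty and ensures termination at exactly one edge.
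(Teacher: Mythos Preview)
Your construction is correct: the two concentration steps $\mathcal{H}_{x^{\ast}x_{2}}$ followed by $\mathcal{H}_{y^{\ast}y_{2}}$ confine all edges to the $2\times 2$ block $\{x^{\ast},x_{2}\}\times\{y^{\ast},y_{2}\}$ while keeping $\{x_{2},y_{2}\}$ alive, and the final one or two steps with fresh isolated vertices $x_{3},y_{3}$ strip the block down to a single edge; since every pair you use lies on one side of the bipartition, the bi-graph property is preserved throughout and the requirement $z\geq 1$ is met.

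The paper itself does not give a proof of this lemma; it simply cites the companion paper \cite{arxiv} and omits the argument. Your self-contained, at-most-four-step construction is therefore a perfectly acceptable substitute, and in fact gives an explicit uniform bound $z\leq 4$ that the bare statement does not advertise.
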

The proof of this lemma appears in \cite{arxiv}, and therefore we will omit it. In general, we have the following result, and it can be proved in much the same way as Lemma \ref{lem4}.
\begin{lemma}\label{lem5}
Let $\mathrm{G}:=(\mathrm{V},\mathrm{E})$ be an undirected graph with $|\mathrm{E}|\geq1$. Then there exists  a finite sequence of node pairs $(i_1,j_1),\dots,(i_z,j_z)$ for some integer $z\geq 1$, such that the cardinality of the edge set of graph $\mathcal{H}_{i_{z}j_{z}}\big(\cdots\mathcal{H}_{i_{2}j_{2}}\big(\mathcal{H}_{i_{1}j_{1}}(\mathrm{G})\big)\big)$ is equal to 1.
\end{lemma}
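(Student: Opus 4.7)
The plan is to closely adapt the argument of Lemma~\ref{lem4}, dropping its bi-graph constraint on the chosen pairs. The key observation is an undirected analog of Lemma~\ref{lem2}: for a pair $(i, j)$ of non-adjacent nodes in any graph $\mathrm{H}$, the edge set of $\mathcal{H}_{ij}(\mathrm{H})$ has exactly $\deg_{\mathrm{H}}(i) + \deg_{\mathrm{H}}(j)$ elements, all incident to $\{i, j\}$, while every node that is not in $\{i, j\}$ and not a neighbor of $i$ or $j$ becomes isolated. This uses only non-adjacency, not any bipartite structure, so the iterative reduction in Lemma~\ref{lem4}'s proof extends to an arbitrary $\mathrm{G}$.

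Concretely, the reduction proceeds in three stages. First, I would pick a pair of non-adjacent nodes $(i_1, j_1)$ in $\mathrm{G}$ and apply $\mathcal{H}_{i_1 j_1}$; the result is a ``double star'' in which every edge is incident to $\{i_1, j_1\}$. Second, I would secure both a degree-one leaf and an isolated vertex in the current graph. If the double star already contains a degree-one leaf, this step is free; otherwise, pick any currently isolated vertex $c$ and apply $\mathcal{H}_{v, c}$ for some leaf $v$ of the double star, which transfers the edges of $v$ to $c$ and produces a star centered at $c$ whose leaves (a subset of $\{i_1, j_1\}$) have degree one. Third, I would pick a degree-one leaf $a$ together with any currently isolated vertex $c'$; applying $\mathcal{H}_{a, c'}$ yields exactly the single edge $\{c', k\}$, where $k$ is the unique neighbor of $a$ in the current graph.

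The main obstacle is verifying, at each stage, the existence of the required non-adjacent pair, degree-one leaf, and isolated vertex. When $\mathrm{G}$ is very dense (for instance, a complete graph) or $|\mathrm{V}|$ is small, these may be absent and extra work is needed: for a near-complete $\mathrm{G}$, a preliminary closure at an adjacent pair reduces the density (self-loops that arise are discarded); for small $|\mathrm{V}|$, one may need intermediate closures, e.g., between two leaves that both lie in the intersection of $N(i_1)$ and $N(j_1)$, to first manufacture isolated vertices. This case analysis parallels the handling of boundary cases in Lemma~\ref{lem4}'s proof and constitutes the technical heart of the argument, while the overall three-stage scheme above remains the conceptual skeleton.
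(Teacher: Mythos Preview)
Your proposal is correct and takes essentially the same approach as the paper: the paper does not give a standalone proof of this lemma but simply remarks that it ``can be proved in much the same way as Lemma~\ref{lem4},'' and your plan is precisely to adapt that bi-graph argument by dropping the bipartite constraint on the chosen pairs and handling the boundary cases where the needed non-adjacent pair, degree-one leaf, or isolated vertex is initially unavailable. Your three-stage reduction and undirected analog of Lemma~\ref{lem2} are exactly the ingredients such an adaptation requires.
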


If $\mathscr{G}$ is an edge-colored multigraph without self-loops and multiple edges, then $\mathscr{G}$ can be regarded as an undirected graph without considering the color of the edges, and the conclusions in Lemma \ref{lem4} and \ref{lem5} are still valid for $\mathscr{G}$. The proof of Theorem \ref{thm4} is based on the following lemma.

\begin{lemma}\label{lem6}
Consider a subset $\mathrm {S}\subseteq\mathpzc{B}\mcup\mathpzc{C}\mcup\mathpzc{D}$ with the associated multigraph $\mathscr{G}_{\mathrm {S}}=(\mathrm{V},\mathscr{E}_{\mathrm {S}})$ and a matrix $A\in\su(n)$ with the associated multigraph $\mathscr{G}_A=(\mathrm{V},\mathscr{E}_A)$. Let $\mathscr{G}_{\mathrm {S}}$ be connected. Then the Lie algebra generated by $\{A\}\mcup\mathrm {S}$ is equal to $\su(n)$ if and only if $\mathscr{G}_A\mcup\mathscr{G}_{\rm S}$  has a self-loop or a cycle with an odd number of Red edges.
\end{lemma}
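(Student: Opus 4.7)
The plan is to reduce Lemma \ref{lem6} to Proposition \ref{su.1} in both directions, handling the single-matrix hypothesis on $A$ by isolating one of its basis components through iterated Lie brackets against elements generated by $\mathrm{S}$. For necessity, write $A=\sum_{k}a_{k}X_{k}$ in the standard basis, let $\mathrm{S}_{A}=\{X_{k}\}$, and observe that $\mathscr{G}_{\mathrm{S}_{A}\mcup\mathrm{S}}=\mathscr{G}_{A}\mcup\mathscr{G}_{\mathrm{S}}$ together with
$$
\big\{\{A\}\mcup\mathrm{S}\big\}_{\rm LA}\subseteq\{\mathrm{S}_{A}\mcup\mathrm{S}\}_{\rm LA}.
$$
If $\mathscr{G}_{A}\mcup\mathscr{G}_{\mathrm{S}}$ has neither a self-loop nor a cycle with an odd number of Red edges, then conditions (ii) and (iii) of Proposition \ref{su.1} both fail; I would also check that (i) fails via the short observation that a spanning connected Blue subgraph together with any additional Red edge must close a Blue path into a cycle carrying exactly one Red edge, while any Green edge would itself be a self-loop. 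Proposition \ref{su.1} then forces $\{\mathrm{S}_{A}\mcup\mathrm{S}\}_{\rm LA}\neq\su(n)$, and the inclusion above finishes this direction.

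For sufficiency, if the distinguished feature (self-loop or odd-Red cycle) already sits inside $\mathscr{G}_{\mathrm{S}}$, Proposition \ref{su.1} gives $\{\mathrm{S}\}_{\rm LA}=\su(n)$ directly. Otherwise some edge of this feature lies in $\mathscr{G}_{A}\setminus\mathscr{G}_{\mathrm{S}}$, and the plan is to isolate the corresponding basis element $X^{*}$ from $A$ modulo $\{\mathrm{S}\}_{\rm LA}$. Because $\mathscr{G}_{\mathrm{S}}$ is connected, iterated brackets inside $\mathrm{S}$, encoded by the edge-colored transitive closure $\mathcal{T}^{z}(\mathscr{G}_{\mathrm{S}})$, generate a rich supply of $B_{ij}$'s and $C_{ij}$'s in $\{\mathrm{S}\}_{\rm LA}$. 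I would then apply Lemma \ref{lem5} to $\mathscr{G}_{A}$ to obtain a sequence of circumjacent closures reducing it to a single edge, arranging the sequence so that the surviving edge is the one carrying $X^{*}$. Each closure step translates, via Lemma \ref{lem3} and an analogue of the identity $\varphi([\widetilde{A},E_{ij}])=\mathcal{H}_{ij}(\mathcal{G}_{\mathrm{valid}})$ used in Lemma \ref{gl.2}, into a Lie bracket of $A$ (or its current reduction) with an element of $\{\mathrm{S}\}_{\rm LA}$. The outcome is a nonzero scalar multiple of $X^{*}$, so that $\psi(X^{*})\mcup\mathscr{G}_{\mathrm{S}}$ is connected and satisfies (ii) or (iii) of Proposition \ref{su.1}; a second application yields $\big\{\{X^{*}\}\mcup\mathrm{S}\big\}_{\rm LA}=\su(n)$, whence $\big\{\{A\}\mcup\mathrm{S}\big\}_{\rm LA}=\su(n)$.

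The delicate point is making the circumjacent-closure reduction compatible with the three-color bracket rules of Lemma \ref{lem3}: unlike the $\so(n)$ setting of Lemma \ref{so.2}, brackets among $B$, $C$, and $D$ generators can produce cancellations that erase the coefficient on $X^{*}$. Handling this will require a careful choice of the reducing sequence, probably removing the non-distinguished edges of $\mathscr{G}_{A}$ first by absorbing them into $\{\mathrm{S}\}_{\rm LA}$ via brackets that project onto specific color components, and, when color interference remains, a genericity argument on the coefficients $a_{k}$ of $A$ to ensure the extracted coefficient stays nonzero.
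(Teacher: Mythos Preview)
Your necessity argument is correct and in fact tidier than the paper's: the paper argues through the closure $\mathrm{S}^{z}$ and the inclusion $\mathscr{E}_{A}\subseteq\mathscr{E}_{\mathrm{S}^{z}}$, whereas your containment $\big\{\{A\}\cup\mathrm{S}\big\}_{\rm LA}\subseteq\{\mathrm{S}_{A}\cup\mathrm{S}\}_{\rm LA}$ together with the observation that condition~(i) of Proposition~\ref{su.1} already forces a self-loop or an odd-Red cycle dispatches the direction in one stroke.

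The sufficiency plan, however, has two real gaps. First, Lemma~\ref{lem5} only asserts that \emph{some} single edge survives the circumjacent-closure sequence; it gives no mechanism for steering the reduction toward a preselected edge. Your phrase ``arranging the sequence so that the surviving edge is the one carrying $X^{*}$'' is precisely the unproved step, and there is no evident way to enforce it. Second, and fatally, the fallback ``a genericity argument on the coefficients $a_{k}$ of $A$'' is unavailable: in this lemma $A$ is a \emph{fixed} matrix, not a generic member of a zero pattern, so you are not allowed to perturb its coefficients to avoid cancellation.

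The paper avoids both problems by declining to target any particular edge. It passes to the closure $\mathrm{S}^{z}$, whose graph is complete with exactly one colour per node pair, and defines $\widetilde{A}$ by subtracting from $A$ every component that already lies in $\mathrm{S}^{z}$. The hypothesis forces $\widetilde{A}\neq0$. After one or two preliminary brackets with elements of $\mathrm{S}^{z}$ to eliminate self-loops and multi-edges from $\psi(\widetilde{A})$, Lemma~\ref{lem5} is applied and \emph{whatever} generator $F_{i^{*}j^{*}}$ emerges is taken. The key point is that, because every non-loop edge of $\widetilde{A}$ carries the colour opposite to the $\mathrm{S}^{z}$-edge at the same position, and bracketing against an $\mathrm{S}^{z}$-element preserves this ``wrong colour'' property (check the four cases in Lemma~\ref{lem3}), the extracted $F_{i^{*}j^{*}}$ is automatically outside $\mathrm{S}^{z}$. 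Hence $\mathscr{G}_{\{F_{i^{*}j^{*}}\}\cup\mathrm{S}^{z}}$ has a multi-edge, and Proposition~\ref{su.1} finishes. No edge-targeting and no genericity are needed; your reduction step should be reorganised along these lines.
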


\begin{proof}
(Sufficiency) Since $\mathscr{G}_{\mathrm {S}}$ is connected, Proposition \ref{su.1} shows that the Lie algebra generated by ${\rm S}$ is equal to $\su(n)$ if $\mathscr{G}_{\rm S}$ has a self-loop or a cycle with an odd number of Red edges. Now, assume that $\mathscr{G}_{\rm S}$ has neither a self-loop nor a cycle containing an odd number of Red edges. Analysis similar to that in the proof of necessity  for Proposition \ref{su.1} shows that there exists a nonnegative integer $z$ such that $\mathscr{G}_{\mathrm {S}^z}=\mathscr{G}_{\mathrm {S}^{z+1}}=\cdots$. In addition, $\mathscr{G}_{\mathrm {S}^z}=({\rm V}, \mathscr {E}_{\mathrm {S}^z})$ is a complete graph with no self-loops or multiple edges. It follows that for any $1\leqslant i< j\leqslant n$, we have $F_{ij}\in\mathrm {S}^z$, either $F=B$ or $F=C$. Below we respectively prove that $\big\{\{A\}\mcup {\rm S}\big\}_{\rm LA}=\su(n)$ under the condition that $\mathscr{G}_A\mcup\mathscr{G}_{\rm S}$ has a self-loop or a cycle with an odd number of Red edges. The proof falls naturally into two parts.

Let us first show that if $\mathscr{G}_A\mcup\mathscr{G}_{\rm S}$ has self-loops, i.e., $\mathscr{G}_{A}$ has self-loops, then  $\big\{\{A\}\mcup {\rm S}\big\}_{\rm LA}=\su(n)$. Define
$$\widetilde{A}:=A-\sum_{\{{i}_k,{j}_k;{\rm Blue}\}\in \mathscr{E}_{\mathrm {S}^z}}a_{k}B_{{i}_k{j}_k}-\sum_{\{{i}_k,{j}_k;{\rm Red}\}\in \mathscr{E}_{\mathrm {S}^z}}a_{k}C_{{i}_k{j}_k}-\sum_{\{{i}_k,{i}_k;{\rm Green}\},\{{j}_k,{j}_k;{\rm Green}\} \in\mathscr{E}_{\mathrm {S}^z}}a_{k}D_{{i}_k{j}_k}.$$
It is immediate that $\widetilde{A}\neq0$ and $\mathscr{G}_{\widetilde{A}}=({\rm V},\mathscr{E}_{\widetilde{A}})$ has self-loops. Moreover, we have	$\big\{\{A\}\mcup{\rm S}\big\}_{\rm LA}=\big\{\{\widetilde{A}\}\mcup{\rm S}^z\big\}_{\rm LA}$. Without loss of generality we can assume $\widetilde{A}=A_{*}+aD_{12}$, $a\neq 0\in\mathbb{R}$. Consider the graph $\mathscr{G}_{\widetilde{A}}=({\rm V},\mathscr {E}_{\widetilde{A}})$. There are two possibilities (i) $\{1,2\}\notin\mathscr {E}_{\widetilde{A}}$; (ii) $\{1,2\}\in\mathscr {E}_{\widetilde{A}}$.
\begin{itemize}
	\item Proof under Case (i): if $\{1,2\}\notin\mathscr {E}_{\widetilde{A}}$, then
	$$\psi\big([\widetilde{A},F_{12}]\big):=\mathscr{G}=({\rm V},\mathscr {E})$$
	has neither self-loops nor multiple edges, and $|\mathscr {E}|\geq 1$. With the notation $\widehat{A}=[\widetilde{A},F_{12}]$, we have $\mathscr{G}=\psi(\widehat{A})$. Applying Lemma \ref{lem5}, there exists  a finite sequence of node pairs $(i_1,j_1),\dots,(i_w,j_w)$ for some integer $w\geq 1$, such that the cardinality of the edge set of graph $\mathcal{H}_{i_{w}j_{w}}\big(\cdots\mathcal{H}_{i_{2}j_{2}}\big(\mathcal{H}_{i_{1}j_{1}}(\mathscr{G})\big)\big)$ is equal to one. Let $\mathcal{H}_{i_{w}j_{w}}\big(\cdots\mathcal{H}_{i_{2}j_{2}}\big(\mathcal{H}_{i_{1}j_{1}}(\mathscr{G})\big)\big)=\big({\rm V},\big\{\{i^*,j^*\}\big\}\big)$. We conclude that
	\begin{align*}
		\psi\big([F_{i_{w}j_{w}},\dots,[F_{i_{2}j_{2}},[\widehat{A},F_{i_{1}j_{1}}]]]\big)
		=\big({\rm V},\big\{\{i^*,j^*\}\big\}\big).
	\end{align*}
	Hence 
	\begin{equation*}
		[F_{i_{w}j_{w}},\dots,[F_{i_{2}j_{2}},[\widehat{A},F_{i_{1}j_{1}}]]]=a^{*}F_{i^{*}j^{*}},
	\end{equation*}
	where $a^{*}$ is the coefficient generated during the operation of the Lie brackets. Since $F_{i_sj_s}\in{\rm S}^z$ for $s=1,\dots,w$, we have $F_{i^{*}j^{*}}\in\big\{\{\widetilde{A}\}\mcup{\rm S}^z\big\}_{\rm LA}$. In addition, with the definition of $\widetilde{A}$, we have $F_{i^{*}j^{*}}\notin{\rm S}^z$. Therefore, $\mathscr{G}_{\{F_{i^{*}j^{*}}\}\mcup{\rm S}^z}$ has  multiple edges. Proposition \ref{su.1} now yields $\big\{\{F_{i^{*}j^{*}}\}\mcup{\rm S}^z\big\}_{\rm LA}=\su(n)$. This implies that $\big\{\{\widetilde{A}\}\mcup{\rm S}^z\big\}_{\rm LA}=\su(n)$, and thus $\big\{\{A\}\mcup{\rm S}\big\}_{\rm LA}=\su(n)$.
	\item Proof of Case (ii): in  $\mathscr{G}_{\widetilde{A}}$, the nodes adjacent to node $1$ except node $2$ are denoted as $i_1,\dots,i_m$, and, the nodes adjacent to node $2$ except node $1$ are denoted as $j_1,\dots,j_n$, where $m,n\geq0$.
	If $m=0$, let $i\in{\rm V}$ and $i\neq1\neq2$. Then it is evident that graph $\psi\big([\widetilde{A},F_{1i}]\big)$ has neither self-loops nor multiple edges, and the cardinality of its edge set is greater than or equal to two. This is due to the fact that $\{1,i\}\notin\mathscr {E}_{\widetilde{A}}$. If $m\geq 1$, then the edge set of $\psi\big([\widetilde{A},F_{12}]\big)$ contains edge $\{2,i_1\}$. Let $i_*\in{\rm V}$ and $i_*\neq i_1\neq1\neq2$. We see at once that edge $\{i_1,i_*\}$ is not in the edge set of $\psi\big([\widetilde{A},F_{12}]\big)$. Therefore, $\psi\big([[\widetilde{A},F_{12}],F_{i_1i_*}]\big)$ has neither self-loops nor multiple edges, and the cardinality of its edge set is greater than or equal to one.
	
	In summarizing, there must exist a matrix $\widehat{A}\in\big\{\{\widetilde{A}\}\mcup{\rm S}^z\big\}_{\rm LA}$ such that $\psi(\widehat{A})$ has neither self-loops nor multiple edges, and the cardinality of its edge set is greater than or equal to one. Analysis similar to that in the proof of Case (i) shows that $\big\{\{A\}\mcup{\rm S}\big\}_{\rm LA}=\su(n)$.
\end{itemize}

The proof above gives more, namely $\big\{\{\widetilde{A}\}\mcup{\rm S}^z\big\}_{\rm LA}=\su(n)$ if $\widetilde{A}\neq0$.

We next prove that if $\mathscr{G}_A\mcup\mathscr{G}_{\rm S}$ has a cycle with an odd number of Red edges, then $\big\{\{A\}\mcup {\rm S}\big\}_{\rm LA}=\su(n)$.  We continue to use the definition of $\widetilde{A}$ in the above. Now $\widetilde{A}\neq 0$, which is due to the fact that $\mathscr{G}_{\mathrm {S}^z}$ dose not have a cycle with an odd number of Red edges, while $\mathscr{G}_A\mcup\mathscr{G}_{\rm S}$ has. It follows easily that $\big\{\{A\}\mcup{\rm S}\big\}_{\rm LA}=\big\{\{\widetilde{A}\}\mcup{\rm S}^z\big\}_{\rm LA}=\su(n)$.

(Necessity) Assume that $\mathscr{G}_A\mcup\mathscr{G}_{\rm S}$ has neither self-loops nor a cycle with an odd number of Red edges. Proposition \ref{su.1} shows that the Lie algebra generated by ${\rm S}$ is not equal to $\su(n)$. In addition, $\mathscr{G}_{\mathrm {S}^z}$ is complete, and it has neither a self-loop nor a cycle with an odd number of Red edges. We must have $\mathscr{E}_A\subseteq \mathscr{E}_{\mathrm {S}^z}$, because $\mathscr{G}_{A}\mcup\mathscr{G}_{{\rm S}^z}$ has neither a self-loop nor a cycle with an odd number of Red edges. This implies that $\big\{\{A\}\mcup{\rm S}\big\}_{\rm LA}=\{{\rm S}\}_{\rm LA}\neq\su(n)$. Therefore, if the Lie algebra generated by $\{A\}\mcup\mathrm {S}$ is equal to $\su(n)$, then $\mathscr{G}_A\mcup\mathscr{G}_{\rm S}$  must have a self-loop or a cycle with an odd number of Red edges. We have now completed the proof of Lemma \ref{lem6}.
\end{proof}

We are now in a position to present the proof of Theorem \ref{thm4}. Let $\mathscr{G}_{\rm contr}^{{}^\beta}$ be connected. Since $\mathscr{G}_{\rm drift}^{{}^\alpha} \mcup \mathscr{G}_{\rm contr}^{{}^\beta}$ has a self-loop or a cycle with an odd number of Red edges, there must exist a matrix $A\in\Sigma_{\rm r}(\su(n)_{\alpha})$ such that $\mathscr{G}_A\mcup \mathscr{G}_{\rm contr}^{{}^\beta}$ has a self-loop or a cycle with an odd number of Red edges. Using Lemma \ref{lem6}, we see that $\big\{\{A\}\mcup\so(n)_\beta\big\}_{\rm LA}=\su(n)$. From this we conclude that the system \eqref{bilinear} is structurally controllable on the Lie group ${\rm SU}(n)$ with respect to the pair of zero patterns $\Sigma_{\rm zero}:=\big(\Sigma_{\rm r}(\su(n)_{\alpha}),\Sigma_{\rm f}(\su(n)_{\beta})\big)$.

\subsection*{F. Proof of Theorem \ref{thm5}}
The statement in Theorem \ref{thm5} will be proved once we prove the lemma below.
\begin{lemma}\label{su.3}
Consider a subset $\mathrm {S}\subseteq\mathpzc{B}\mcup\mathpzc{C}\mcup\mathpzc{D}$ with the associated multigraph $\mathscr{G}_{\mathrm {S}}=(\mathrm{V},\mathscr{E}_{\mathrm {S}})$ and a matrix $A\in\su(n)$ with the associated multigraph $\mathscr{G}_A=(\mathrm{V},\mathscr{E}_A)$. The Lie algebra generated by $\{A\}\mcup\mathrm {S}$ is equal to $\su(n)$ if the following conditions hold:

$(i)$ Each connected component of $\mathscr{G}_{\mathrm {S}}$ contains at least three nodes;

$(ii)$ $\mathscr{G}_{A}$ has no multiple edges and the union graph $\mathscr{G}_A \mcup\mathscr{G}_{\mathrm{S}}$ is connected;

$(iii)$ $\mathscr{G}_A \mcup\mathscr{G}_{\mathrm{S}}$  has a self-loop or a cycle with an odd number of Red edges.
\end{lemma}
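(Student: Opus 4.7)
The plan is to reduce Lemma \ref{su.3} to the already-established connected case of Lemma \ref{lem6}, by using the matrix $A$ to generate cross-component edges that merge the connected components of $\mathscr{G}_{\mathrm{S}}$. The strategy parallels the one used for $\gl(n)$ in Lemma \ref{gl.2}.

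Let $\mathrm{V}_1,\ldots,\mathrm{V}_m$ denote the vertex sets of the connected components of $\mathscr{G}_{\mathrm{S}}$, where each $|\mathrm{V}_i|\ge 3$ by hypothesis. The case $m=1$ is exactly Lemma \ref{lem6}, so we may assume $m\ge 2$. Within each $\mathrm{V}_i$, applying Proposition \ref{su.1} to the sub-collection of $\mathrm{S}$ supported on $\mathrm{V}_i$ yields, at a minimum, every $B_{uv}$ with $u,v\in\mathrm{V}_i$ (this follows from Lemma \ref{so.1}, since the Blue subgraph restricted to a component with $\ge 3$ nodes together with transitive-closure steps as in Definition \ref{map.1} produces a complete Blue clique on $\mathrm{V}_i$); denote the collection of these intra-component generators by $\mathcal{B}^\star$.

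Next, decompose $A = A_{\rm in} + A_{\rm cross} + A_{\rm diag}$, where $A_{\rm in}$ collects the Blue/Red summands of $A$ whose edges lie inside some $\mathrm{V}_i$, $A_{\rm cross}$ collects those whose edges cross between distinct components, and $A_{\rm diag}$ collects the Green summands. The connectivity of $\mathscr{G}_A\mcup\mathscr{G}_{\mathrm{S}}$ combined with $m\ge 2$ forces $A_{\rm cross}\ne 0$. I would then adapt the circumjacent-closure technique of Lemmas \ref{lem2} and \ref{lem5} to the edge-colored multigraph $\psi(A)$: iteratively apply brackets $[\,\cdot\,,F]$ with $F\in\mathcal{B}^\star$, chosen from the within-component generators of the components containing the endpoints of some fixed edge in $A_{\rm cross}$, so as to isolate a single cross-component element $F_{i^\star j^\star}$ (with $F\in\{B,C\}$, $i^\star\in\mathrm{V}_i$, $j^\star\in\mathrm{V}_j$, $i\ne j$) in $\big\{\{A\}\mcup\mathrm{S}\big\}_{\rm LA}$. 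The hypothesis that $\mathscr{G}_A$ has no multi-edges is precisely what rules out different-colored summands of $A_{\rm in}$ conspiring to cancel this target contribution during the isolation, since by Lemma \ref{lem3} the relevant brackets then act on disjoint index pairs.

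Having produced such an $F_{i^\star j^\star}$, one replaces $\mathrm{S}$ by $\mathrm{S}\mcup\{F_{i^\star j^\star}\}$: the number of connected components of the associated multigraph drops by at least one, while hypotheses (i)--(iii) are preserved, and in particular any self-loop or odd-Red-cycle of $\mathscr{G}_A\mcup\mathscr{G}_{\mathrm{S}}$ remains present in the enlarged graph. After finitely many iterations the problem reduces to the setting of Lemma \ref{lem6}, yielding $\big\{\{A\}\mcup\mathrm{S}\big\}_{\rm LA}=\su(n)$. The main obstacle is the color bookkeeping in the isolation step: by Lemma \ref{lem3}, brackets of two Reds yield Blue, a Blue and a Red yield Red, and two Blues yield Blue, so the sequence of bracketing edges must be tailored so that the intended color survives at $\{i^\star,j^\star\}$; additionally, one must verify that the self-loop-or-odd-Red-cycle property of $\mathscr{G}_A\mcup\mathscr{G}_{\mathrm{S}}$ genuinely transfers to each intermediate enlarged graph rather than being consumed when forming $F_{i^\star j^\star}$, which may require handling the self-loop case and the odd-Red-cycle case separately at the final invocation of Lemma \ref{lem6}.
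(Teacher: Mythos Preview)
Your overall strategy---isolate a single cross-component generator $F_{i^\star j^\star}$ via circumjacent-closure brackets, add it to $\mathrm S$, drop the component count by one, and induct until Lemma \ref{lem6} applies---is exactly the paper's approach. The induction scheme and the final appeal to Lemma \ref{lem6} are both correct.

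There is, however, a concrete error in your setup of $\mathcal B^\star$. You assert that within each component $\mathrm V_i$ the Lie algebra generated by $\mathrm S$ contains \emph{every} $B_{uv}$ with $u,v\in\mathrm V_i$, citing Lemma \ref{so.1}. This is false: Lemma \ref{so.1} requires the \emph{Blue} subgraph on $\mathrm V_i$ to be connected, but a component of $\mathscr G_{\mathrm S}$ may be connected entirely through Red edges. For instance, with $\mathrm V_i=\{1,2,3\}$ and $\mathrm S\cap\mathrm V_i=\{C_{12},C_{23}\}$, the closure $\mathrm S^z$ on this component is $\{C_{12},C_{23},B_{13}\}$; neither $B_{12}$ nor $B_{23}$ is ever produced. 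More generally, the necessity argument for Proposition \ref{su.1} shows that if a component has no self-loop and no odd-Red cycle, then $\mathrm S^z$ restricted to it is a complete simple multigraph with exactly \emph{one} colour per pair---so for each $u,v$ you get $B_{uv}$ \emph{or} $C_{uv}$, not both.

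The paper handles this by working with whatever $F_{uv}\in\mathrm S^z$ is available ($F\in\{B,C\}$) rather than insisting on Blue. The key observation is that Lemma \ref{lem4} is stated for uncoloured undirected graphs: by Lemma \ref{lem3}, bracketing $\widetilde A$ with any $F_{v_su_s}\in\mathrm S^z$ realises the circumjacent closure $\mathcal H_{v_su_s}$ at the level of edge supports, irrespective of whether $F=B$ or $F=C$. So the isolation step goes through colour-blind, and the colour of the surviving generator $F_{v^\star u^\star}$ is simply whatever falls out---which is fine, since Lemma \ref{lem6} only needs \emph{some} element connecting the two components. Once you replace ``every $B_{uv}$'' by ``the unique $F_{uv}\in\mathrm S^z$'' and drop the appeal to Lemma \ref{so.1}, your sketch coincides with the paper's proof.
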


\begin{proof}
If $\mathscr{G}_{\rm S}$ is connected, then by Lemma \ref{lem6}, the Lie algebra generated by $\{A\}\mcup\mathrm {S}$ is equal to $\su(n)$. Now assume that $\mathscr{G}_{\rm S}$ has $m$ connected components with $ m\geq2$. Let $\mathscr{G}_{i}=(\mathrm{V}_{i},\mathscr{E}_{i})$ denote the $i$-th connected component of  $\mathscr{G}_{\rm S}$ for $i=1,\dots,m$. We continue to use the definition of ${\rm S}^k$ in the proof of Proposition \ref{su.1}. Write $\overline{{\rm S}}=\bigcup_{k=1}^\infty {\rm S}^k$. There must exist a nonnegative integer $z$ such that ${\rm S}^z ={\rm S}^{z+1}=\dots$, which then implies $\overline{{\rm S}}={\rm S}^z$. By the definition of $\overline{{\rm S}}$, we see that ${\rm S}^z$ spans $\{{\rm S}\}_{\rm LA}$. Since $\mathscr{G}_{\rm S}$ has $m$ connected components, $\mathscr{G}_{{\rm S}^z}$ has $m$ connected components too. In addition, for any $u<v\in\mathrm{V}_{i}$, $B_{uv}\in{\rm S}^z$ or $C_{uv}\in{\rm S}^z$.

We continue to use the definition of $\widetilde{A}$ in the proof of Lemma \ref{lem6}. As the union graph $\mathscr{G}_{A} \mcup \mathscr{G}_{{\rm S}^z}$ is connected while $\mathscr{G}_{{\rm S}^z}$ is not, we always have $\mathscr{E}_{\widetilde{A}}\neq\emptyset$, i.e., $\widetilde{A}\neq 0$.
This allows us to further conclude that
\begin{equation}\label{eq7}
\big\{\{A\}\mcup{\rm S}\big\}_{\rm LA}=\big\{\{\widetilde{A}\}\mcup{\rm S}^z\big\}_{\rm LA}.
\end{equation}
The remainder of the proof is divided into two steps.

\noindent{\it Step 1.} We first prove the statement when $\mathscr{G}_{\rm S}$ contains only two connected components, i.e., $m=2$.
Because $\mathscr{G}_{A} \mcup \mathscr{G}_{\rm S}$ is connected, $\mathscr{G}_{\widetilde{A}} \mcup \mathscr{G}_{{\rm S}^z}$ is also connected by the definition of $\mathscr{G}_{\widetilde{A}}$ and $\mathscr{G}_{{\rm S}^z}$. In addition, there must exist an edge $\{u,v\}\in\mathscr{G}_{\widetilde{A}}$ such that $u\in{\rm V}_1,v\in{\rm V}_2$. Let $v_{11}$ be the node in $\mathrm{V}_{1}$ with $\deg(v_{11})=k>0$, and $v_{i_{1}j_{1}},\dots,v_{i_{k}j_{k}}$ be its neighbors. For convenience, we let $v_{i_{1}j_{1}},\dots,v_{i_{r}j_{r}}\in\mathrm{V}_{2}$, $1\leq r\leq k$. Consider the node $v_{12}\in \mathrm{V}_{1}$. We must have $F_{v_{11}v_{12}}\in {\rm S}^z$, where $F=B$ or $F=C$.

Let $v_{i_{*}j_{*}}\neq v_{i_{1}j_{1}}\in \mathrm{V}_{2}$. There holds $F_{v_{i_{1}j_{1}}v_{i_{*}j_{*}}}\in {\rm S}^z$. Lemma \ref{lem3} now shows that
$$\psi\big([[\widetilde{A},F_{v_{11}v_{12}}],F_{v_{i_{1}j_{1}}v_{i_{*}j_{*}}}]\big):=\mathscr{G}=\big({\rm V}_1\mcup{\rm V}_2, \mathscr {E}\big)$$
is a bi-graph, with $|\mathscr {E}|\geq1$. Write $[[\widetilde{A},F_{v_{11}v_{12}}],F_{v_{i_{1}j_{1}}v_{i_{*}j_{*}}}]=\widehat{A}$. It is immediate that $\mathscr{G}=\psi(\widehat{A})$.
From Lemma \ref{lem4}, there is an integer $z\geq 1$ and a finite sequence of node pairs either $v_{s},u_{s}\in \mathrm{V}_{1}$ or $v_{s},u_{s}\in \mathrm{V}_{2}$ for $s=1,2,\dots,z$,
such that $\mathcal{H}_{v_{z}u_{z}}\big(\cdots(\mathcal{H}_{v_{1}u_{1}}(\mathscr{G}))\big)=\big(\mathrm{V}_{1}\mcup\mathrm{V}_{2},\mathscr{E}_{v_{z}u_{z}}\big)$ is also a bi-graph with $|\mathscr{E}_{v_{z}u_{z}}|=1$. Let $\mathscr{E}_{v_{z}u_{z}}=\big\{\{v^{*},u^{*}\}\big\}$, where $v^{*}\in\mathrm{V}_{1},u^{*}\in\mathrm{V}_{2}$.
 Note that $\{v_{s},u_{s}\}\in\mathscr{E}_{{\rm S}^z}$, we have $F_{v_{s}u_{s}}\in {\rm S}^z$ for $ s=1,\dots,z$.

Based on Definition \ref{map.2} and Lemma \ref{lem3}, it can be concluded
 \begin{align*}
\psi\big([F_{v_{z}u_{z}},\dots,[F_{v_{2}u_{2}},[\widehat{A},F_{v_{1}u_{1}}]]]\big)
 =\mathcal{H}_{v_{z}u_{z}}\big(\cdots(\mathcal{H}_{v_{1}u_{1}}(\mathscr{G}))\big)
 \end{align*}
without considering the color of the edges.
This yields
 $$[F_{v_{z}u_{z}},\dots,[F_{v_{2}u_{2}},[\widehat{A},F_{v_{1}u_{1}}]]]=a^{*}F_{v^{*}u^{*}},$$
where $a^{*}$ is the coefficient generated during the operation of the Lie brackets. Therefore, $F_{v^{*}u^{*}}\in\big\{\{\widetilde{A}\}\mcup{\rm S}^z\big\}_{\rm LA}$, and, together with the connectivity of $\mathscr{G}_{\{F_{v^{*}u^{*}}\}\mcup{\rm S}^z}$, Lemma \ref{lem6} now leads to
$$\big\{\{\widetilde{A}\}\mcup{\rm S}^z\big\}_{\rm LA}=\big\{\{\widetilde{A}\}\mcup\{F_{v^{*}u^{*}}\}\mcup{\rm S}^z\big\}_{\rm LA}=\su(n).$$
By \eqref{eq7} it is obvious that $\big\{\{A\}\mcup{\rm S}\big\}_{\rm LA}=\su(n).$

\noindent{\it Step 2.} In this step, we proceed to establish the result for the general case by induction on the number of connected components of $\mathscr{G}_{\rm S}$.

\noindent{\it \textbf{Induction Hypothesis}}. If graph $\mathscr{G}_{\rm S}$ contains $m\geq 2$ connected components, then $\big\{\{A\}\mcup{\rm S}\big\}_{\rm LA}=\su(n)$. We will prove it for $m+1$.

To do this, consider $\mathscr{G}_{\widetilde{A}}$. Let $v_{11}$ be the node in $\mathrm{V}_{1}$ with $\deg(v_{11})=k>0$, and $v_{i_{1}j_{1}},\dots,v_{i_{k}j_{k}}$ denote the nodes that are adjacent to $v_{11}$. There is no loss of generality in assuming $v_{i_{1}j_{1}},\dots,v_{i_{r}j_{r}}\in \mathrm{V}_{2}$, with $1\leq r\leq k$. Analysis similar to that in the step 1 shows that
$$\psi\big([[\widetilde{A},F_{v_{11}v_{12}}],F_{v_{i_{1}j_{1}}v_{i_{*}j_{*}}}]\big)=\psi(\widehat{A})=\mathscr{G}_a\mcup\mathscr{G}_b,$$
where $\mathscr{G}_a=({\rm V}_1\mcup{\rm V}_2,\mathscr {E}_a)$ is a bi-graph with $|\mathscr {E}_a|\geq1$, $\mathscr{G}_b=(\mcup_{i=3}^{m+1}{\rm V}_i,\mathscr {E}_b)$ is a empty graph. Again we apply Lemma \ref{lem4} to $\mathscr{G}_a$ and obtain $F_{v^{*}u^{*}}\in\big\{\{\widetilde{A}\}\mcup{\rm S}^z\big\}_{\rm LA}$ with $v^{*}\in\mathrm{V}_{1},u^{*}\in\mathrm{V}_{2}$. It follows immediately that $F_{v^{*}u^{*}}\in\big\{\{A\}\mcup{\rm S}\big\}_{\rm LA}$. Now $\mathscr{G}_{\{F_{v^{*}u^{*}}\}\mcup{\rm S}}$ has $m$ connected components, and by our induction hypothesis, $\big\{\{A\}\mcup\{F_{v^{*}u^{*}}\}\mcup{\rm S}\big\}_{\rm LA}=\su(n)$. Thus we arrive at the conclusion that $\big\{\{A\}\mcup{\rm S}\big\}_{\rm LA}=\su(n)$, and the proof is complete.
\end{proof}

\end{document}